\newtheorem{theorem}{Theorem}
\newtheorem{lemma}{Lemma}
\newtheorem{corollary}{Corollary}
\newtheorem{proposition}{Proposition}
\newtheorem{conjecture}{Conjecture}
\newtheorem{definition}{Definition}
\newtheorem{remark}{Remark}
\newtheorem{example}{Example}
\numberwithin{equation}{section}
\newcommand{\Z}{\mathbb{Z}}
\newcommand{\R}{\mathbb{R}}
\newcommand{\N}{\mathbb{N}}
\begin{document}

\date{}

\author{
Tatyana Zaitseva
\thanks{Laboratory ``High-dimensional approximation and applications'', Lomonosov Moscow State University, Moscow Center for Fundamental and Applied Mathematics, {e-mail: \tt\small
zaitsevatanja@gmail.com}} 
}

\title{Simple tiles and attractors
\thanks{
The research of the author was carried out with the support of the grant of the Government of the Russian Federation (project 14.W03.31.0031).
}}

\maketitle

\markright{Simple tiles and attractors}

\begin{abstract}

We study self-similar attractors in the space $\R^d$, i.e., self-similar compact sets defined by several affine operators with the same linear part. 
The special case of attractors when the matrix~$M$ of the linear part of affine operators and the shifts are integer, is well known in the literature due to many applications in the construction of wavelet and in approximation theory. In this case, if an attractor has measure one, it is called a tile.  
We classify self-similar attractors and tiles in case when they are either polyhedra or union of finitely many polyhedra. We obtain a complete description of the integer contraction matrices and of the digit sets for ti\-les\--pa\-ralle\-le\-pipeds and for convex tiles in arbitrary dimension. It is proved that on a two-di\-men\-si\-onal plane, every polygonal tile (not necessarily convex) must be a parallelogram. Non-trivial examples of multidimensional tiles which are a finite union of polyhedra are given, and in the case $d = 1$ their complete classification is provided. Appli\-ca\-tions to orthonormal Haar systems in $\mathbb{R}^d$ and to integer univariate tiles are considered.

\bigskip

\noindent \textbf{Key words:} {\em Self-affine attractor, tile, Haar system, wavelets, self-similarity, polyhedra.}
\smallskip

\begin{flushright}
\noindent  \textbf{AMS 2010 subject classification} {\em 52C22, 42C40, 05B45, 39A99, 28A80, 47H09}
\end{flushright}

\end{abstract}

\section{Introduction}
\label{intro}

Self-similar tiles and attractors have been widely studied in the literature both intrinsically and due to applications in function theory, digital signal processing, combinatorics and approximation theory (see \cite{GrHa} -- \cite{CHM}). 
They are used in construction of multivariate Haar functions and other systems of wavelets in $\R^d$. As a rule, self-similar tiles and attractors have rather complex fractal structure. Even the simplest questions about their properties remain open, for example, the classification of connected tiles, etc. In this work we analyse the problem of characterization of ``simple'' tiles and attractors, which are polyhedra or a finite union of polyhedra. This problem is practically important not only due to the simplicity of construction of the corresponding Haar functions but also due to the fact that these functions have the maximal regularity among all possible Haar bases in $\R^d$ (regularity issue was analysed, for example, in \cite{CharProtArch} and \cite{myprev}).
The smoothness implies the fast convergence of the corresponding partial sums of wavelet expansions, that is also important in practice. 

Recall that a tile in $\R^d$ is a set of points of the form $\sum_{k = 1}^{\infty}M^{-k}b_k$ of measure one, where $M$ is an integer matrix, all of whose eigenvalues are larger than one in the absolute value (expanding matrix), all $b_k$ are from a finite set of ``digits'' in $\Z^d$. The set of digits contains $|\det M|$ elements, one from each class of equivalence of $\Z^d / M\Z^d$ (see Definition \ref{def1}
 further). A tile is a compact set whose integer shifts cover all space with one layer. To a certain extent, tile is a multidimensional generalization of the unit segment $[0, 1]$ for a ``number system'' with the matrix $M$ base. The generalization of the notion of tile is the notion of attractor. In this case, $M$ is an arbitrary expanding matrix (not necessarily integer) and ``digits'' $b_k$ are arbitrary vectors from $\R^d$. An attractor has the following characteristic property of self-similarity: $G = \bigcup_{s_k} {M^{-1}(G + s_k)}$, where the sets $M^{-1}(G + s_k)$ have intersections only of zero measure (see Definition \ref{def2} further). As a tile is also an attractor, we can say that a tile is a self-similar set whose integer shifts cover the space with  overlappings of zero measure. 

For example, the tile corresponding for the matrix $\begin{pmatrix}1 & 1 \\ -1 & 1\end{pmatrix}$ and digits $\begin{pmatrix}0 \\ 0\end{pmatrix}$, $\begin{pmatrix}1 \\ 0\end{pmatrix}$ is a so called Dragon tile (see Fig. \ref{dragon}).

\begin{figure}[h!]
\begin{minipage}[h!]{0.4\linewidth}
\centering
\includegraphics[width = 1\textwidth]{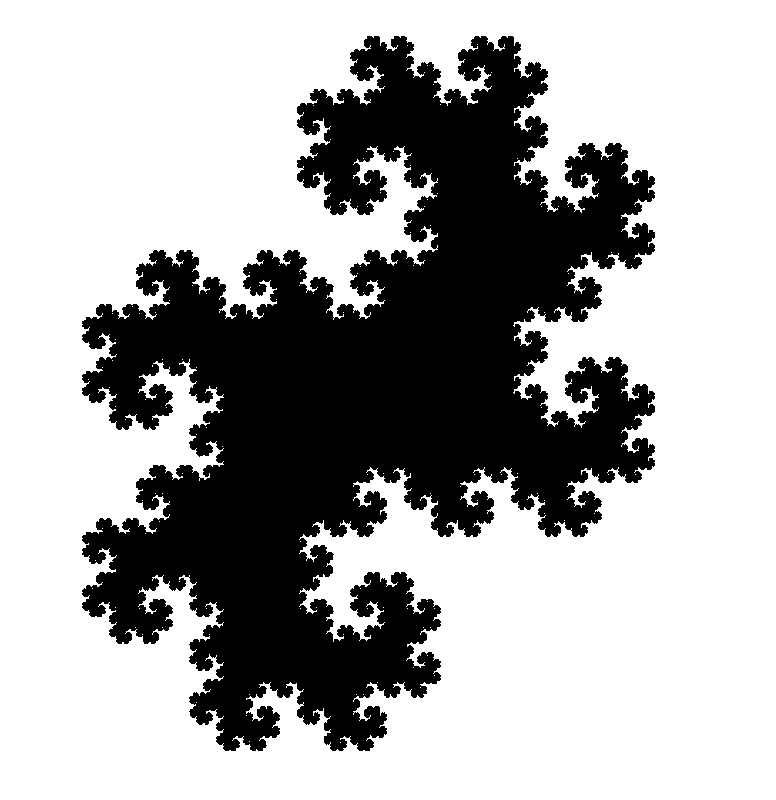}

\caption{Dragon}
\label{dragon}
\end{minipage}
\quad
\begin{minipage}[h!]{0.4\linewidth}
\centering
\includegraphics[width = 1\textwidth]{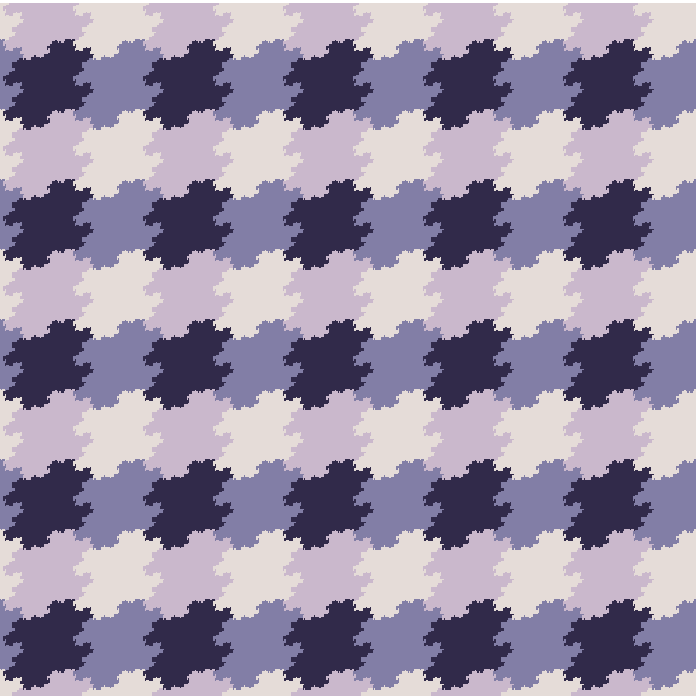}

\caption{Tiling by a tile}
\label{thirdtype}
\end{minipage}
\end{figure}

The tiling corresponding to the matrix $\begin{pmatrix}1 & -2 \\ 1 & 0\end{pmatrix}$ and digits $\begin{pmatrix}0 \\ 0\end{pmatrix}$, $\begin{pmatrix}1 \\ 0\end{pmatrix}$ is shown in Figure \ref{thirdtype}. A typical tile has a fractal structure. However, there are some exceptions. The tile corresponding to the matrix $\begin{pmatrix}0 & -2 \\ 1 & 0\end{pmatrix}$ and digits $\begin{pmatrix}0 \\ 0\end{pmatrix}$, $\begin{pmatrix}1 \\ 0\end{pmatrix}$ is a simple rectangle. In Fig. \ref{rect2dim} we can see its partition into its dilated copies. 

\begin{figure}[ht]
\centering
\includegraphics[width=0.4\textwidth]{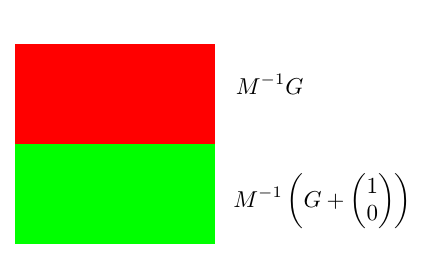}
\caption{A partition of tile-rectangle into two copies similar to it}
\label{rect2dim}
\end{figure}

In this work we classify simple tiles. In the beginning we consider the simplest case of tiles which are parallelepipeds. These tiles are characterized in \cite{GrMad}. We obtain their full classification in Section \ref{theorbox}. Then we study tiles-polyhedra. For two-dimensional plane ($d = 2$) we show that there does not exist polygonal tile (not necesseraly convex), except for parallelepipeds. For arbitrary dimension analogous results stay as an open problem, which is formulated as a conjecture in Section \ref{hyp}. All convex tiles are characterized in Section \ref{convexcase}. Finally, the question about disconnected simple tiles and attractors (which consist of finite number of polyhedra) is, apparently, the most difficult. We give non-trivial examples of such sets in arbitrary dimensions and only in the case $d = 1$ we obtain their complete classification mentioning other known results about integer tiles. 

Obtained results can be used for constructing Haar systems in $\R^d$, so called box-Haar systems which have simple structure and have larger smoothness in com\-pa\-rison to other Haar bases (see Section \ref{boxxhaar}). 

The structure of this work is the following: 
in Section \ref{alldefin} we give the main definitions and recall basic facts about tiles, in Section \ref{multi} we prove a range of results about reducibility, and in Section \ref{theorbox} classify all attractor-parallelepipeds and analyse some examples. In Section \ref{boxxhaar} box-Haar systems are constructed, Section \ref{plane} is devoted to attractors which are polygons in the plane. Then (Section \ref{convexcase}) we classify all convex attractors in $\R^d$, finally, the question about the classification of disconnected attractors is studied in Sections \ref{lineres} and \ref{examptwo}.

\section{Main definitions}
\label{alldefin}
Let $M \in \Z^{d\times d}$ be an integer expanding matrix (all of whose eigenvalues are larger than one in absolute value). Let  $m = |\det{M}|$. The factor-group $\Z^d / M\Z^d$ consists of $m$ equivalent classes. We choose one representative $d_i \in \Z^d$ from each equivalence class, we call this obtained set a \textit{set of digits}: $D(M) = \left\{d_i : i = 0, \ldots, m - 1\right\}$. We always suppose that $0 \in D(M)$. In one-dimensional case, $D(M)$ denotes the set of digits of the $m$-adic system. We always suppose that the matrix and the set of digits are defined and that the set of digits corresponds to this matrix. Let us notice that for every matrix, there are infinitely many corresponding sets of digits.

Consider the following set 

$$G = \left\{\sum \limits_{k = 1}^{\infty}M^{-k}d_{n_k} : d_{n_k} \in D(M)\right\}.$$

In the works \cite{GrHa}, \cite{GrMad}, it is shown that for every expanding integer matrix $M$ and for an arbitrary set of digits $D(M)$, the set $G$ is a compact set with a nonempty interior and possesses the following properties: 

\begin{enumerate}
\item the Lebesgue measure $\mu(G) \in \N$;
\item $G = \bigcup_{d \in D(M)} {M^{-1}(G + d)}$, the sets $M^{-1}(G + d)$ have intersections of zero measure;
\item the indicator function $\chi = \chi_G(x)$ of $G$ almost everywhere satisfies refinement equation  
$$\chi(x) = {\sum \limits_{d \in D(M)}{\chi(Mx - d)}}, \quad x \in \R^d;$$
\item $\sum_{k \in \Z^d}\chi(x + k) \equiv \mu(G)$ almost everywhere, i.e., integer shifts of $\chi$ cover $\R^d$ with $\mu(G)$ layers;
\item $\mu(G) = 1$ if and only if the function system $\{\chi(\cdot + k)\}_{k \in \Z^d}$ is orthonormal.
\end{enumerate}

This allows us to give the following definition.  

\begin{definition}\label{def1}
Let $M \in \Z^{d\times d}$ be the fixed expanding integer matrix and $D(M) = \left\{d_i : i = 0, \ldots, m - 1\right\}$ be the corresponding set of digits. If the Lebesgue measure of the set $G = \left\{\sum_{k = 1}^{\infty}M^{-k}d_{n_k} : d_{n_k} \in D(M)\right\}$ is equal to one (i.e. $\R^d$ is covered by integer shifts of set $G$ with one layer), then the set $G$ is called \textit{tile}.  
\end{definition}

However, in many cases we impose some weaker assumptions on the set $G$. We require the existence of such an expanding matrix $M \in \R^{d \times d}$ (not necessarily integer) and such set $S(M)$ of arbitrary vectors from $\R^d$, that the property $2$ is satisfied. That is: 
\begin{definition}\label{def2}
If for a nonempty compact set $G$, there exists expanding matrix $M \in \R^{d\times d}$ and the set of arbitrary vectors $S(M)$ such that $G = \bigcup_{s \in S(M)} {M^{-1}(G + s)}$ where the sets $M^{-1}(G + s)$ have intersections of zero measure, then the set $G$ is called \textit{attractor}. 
\end{definition}

\begin{remark}
For every set of vectors $S(M)$ and an arbitrary expanding matrix $M$, there exists a unique attractor, because by classical Hutchinson theorem (see \cite{Hutch}) for each finite set of contractions, there is a unique invariant set. 
\end{remark}

When it is possible, we formulate theorems in general case, for attractors. Since tile is a special case of an attractor, it is important to keep in mind the correctness of those results for tiles.

\section{Reducible tiles and attractors}
\label{multi}
We start with the definitions of tensor products of attractors and establish their main properties. 
For arbitrary vectors $a = (a_1, \ldots, a_p)$ and $b = (b_1, \ldots, b_q)$ we denote $a \times b = (a_1, \ldots, a_p, b_1, \ldots, b_q)$. For arbitrary sets of vectors $A$ and $B$ we denote by $A \times B = \{a \times b \mid a \in A, b\in B\}$. 

\begin{definition}
Let us fix an attractor $G_1$ with a matrix $M_1$ and a set of translations $S_1$ in $\R^{d_1}$ and an attractor $G_2$ with a matrix $M_2$ and a set of translations $S_2$ in $\R^{d_2}$. Then we call the set $G_1 \times G_2$ \textit{tensor product of attractors} $G_1$ and $G_2$. 
\end{definition}

\begin{proposition}\label{prop1}
1) The set $G_1 \times G_2 \subset \R^{d_1 + d_2}$ is also an attractor for the block-diagonal matrix $M$, which consists of blocks $M_1$ and $M_2$ and the set of translations $S_1 \times S_2$. 

2) If both sets $G_1$ and $G_2$ are tiles, then their product (as attractors) is also a tile.
\end{proposition}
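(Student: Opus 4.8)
The plan is to verify Definition~\ref{def2} directly for the block-diagonal data. Set $M=\mathrm{diag}(M_1,M_2)$, so that $M^{-1}=\mathrm{diag}(M_1^{-1},M_2^{-1})$; since the spectrum of $M$ is the union of the spectra of $M_1$ and $M_2$, the matrix $M$ is expanding, and $G_1\times G_2$ is a nonempty compact set as a product of such sets. For the self-similarity relation I would first record the elementary identity $(G_1\times G_2)+(s_1\times s_2)=(G_1+s_1)\times(G_2+s_2)$ together with the fact that the block-diagonal $M^{-1}$ acts coordinatewise, giving $M^{-1}\big((G_1+s_1)\times(G_2+s_2)\big)=\big(M_1^{-1}(G_1+s_1)\big)\times\big(M_2^{-1}(G_2+s_2)\big)$. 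Taking the union over all $(s_1,s_2)\in S_1\times S_2$ and using that the Cartesian product distributes over unions, the right-hand side collapses to $\big(\bigcup_{s_1}M_1^{-1}(G_1+s_1)\big)\times\big(\bigcup_{s_2}M_2^{-1}(G_2+s_2)\big)=G_1\times G_2$, which is exactly the required covering.

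It then remains to check that the pieces overlap in measure zero. Here I would combine the set identity $(A\times B)\cap(A'\times B')=(A\cap A')\times(B\cap B')$ with the product-measure formula $\mu_{d_1+d_2}(U\times V)=\mu_{d_1}(U)\,\mu_{d_2}(V)$. Given two distinct digits $(s_1,s_2)\neq(s_1',s_2')$, they differ in at least one block. If $s_1\neq s_1'$, the attractor property of $G_1$ makes $M_1^{-1}(G_1+s_1)\cap M_1^{-1}(G_1+s_1')$ a null set in $\R^{d_1}$, and the product formula then forces the full intersection to be null regardless of the second factor; the case $s_2\neq s_2'$ is symmetric. This disposes of part~1.

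For part~2, note first that if $G_1,G_2$ are tiles then $M_1,M_2$ are integer expanding and $S_1,S_2$ are integer digit sets, so $M$ is integer expanding and $S_1\times S_2\subset\Z^{d_1+d_2}$. The cardinality is correct: $|S_1\times S_2|=|S_1|\,|S_2|=|\det M_1|\,|\det M_2|=|\det M|$. To see that $S_1\times S_2$ is a genuine digit set I would check that these vectors lie in distinct cosets of $\Z^{d_1+d_2}/M\Z^{d_1+d_2}$: if $(s_1,s_2)-(s_1',s_2')\in M\Z^{d_1+d_2}$, then coordinatewise $s_1-s_1'\in M_1\Z^{d_1}$ and $s_2-s_2'\in M_2\Z^{d_2}$, whence $s_1=s_1'$ and $s_2=s_2'$ because $S_1$ and $S_2$ are complete residue systems. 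Finally the product-measure formula gives $\mu(G_1\times G_2)=\mu(G_1)\,\mu(G_2)=1$, so by Definition~\ref{def1} the attractor $G_1\times G_2$ is a tile.

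The only genuinely substantive step is the measure-zero overlap in part~1: one must be careful that a null set in one block multiplied by an arbitrary, possibly full-measure, set in the other block still yields a null set in $\R^{d_1+d_2}$, which is precisely what the product-measure identity guarantees. Everything else reduces to routine bookkeeping with block-diagonal matrices and with cosets.
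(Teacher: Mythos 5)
Your proof is correct and takes essentially the same route as the paper: direct verification of the self-similarity identity through the coordinatewise action of the block-diagonal matrix, the product-measure computation for the null overlaps (which the paper merely asserts ``similarly''), and the same coset check that $S_1 \times S_2$ is a genuine digit set. The only minor divergence is in part 2, where you conclude tile-hood from $\mu(G_1 \times G_2) = \mu(G_1)\,\mu(G_2) = 1$ together with Hutchinson uniqueness (so that $G_1 \times G_2$ coincides with the canonical set $\bigl\{\sum_{k=1}^{\infty} M^{-k} d_{n_k}\bigr\}$), whereas the paper explicitly rewrites that series as $G_1 \times G_2$ and also verifies the one-layer covering by integer shifts directly; both are legitimate given the properties listed in Section \ref{alldefin}.
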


\begin{proof}
We denote by $\begin{pmatrix}s_1 \\ s_2\end{pmatrix}$ the product $s_1 \times s_2$. To prove 1) we check that 
\begin{eqnarray*}
\bigcup \limits_{s = s_1 \times s_2 \in S_1 \times S_2} {M^{-1}(G + s)} = \bigcup \limits_{\substack{s_1 \times s_2 \in S_1 \times S_2 \\ g_1 \times g_2 \in G_1 \times G_2}}{\begin{pmatrix}{M_1}^{-1} & 0\\ 0 & {M_2}^{-1}\end{pmatrix}\left(\begin{pmatrix}g_1\\ g_2\end{pmatrix} + \begin{pmatrix}s_1\\ s_2\end{pmatrix}\right)} \\ = \bigcup \limits_{s_1 \times s_2 \in S_1 \times S_2} {\begin{pmatrix}M_1^{-1}(G_1 + s_1) \\ M_2^{-1}(G_2 + s_2)\end{pmatrix}} = G_1 \times G_2,
\end{eqnarray*}
and similarly that the sets $$\begin{pmatrix}{M_1}^{-1} & 0\\ 0 & {M_2}^{-1}\end{pmatrix}\left(\begin{pmatrix}{G_1}\\ {G_2}\end{pmatrix} + \begin{pmatrix}s_1\\ s_2\end{pmatrix}\right) \text{ and } \begin{pmatrix}{M_1}^{-1} & 0\\ 0 & {M_2}^{-1}\end{pmatrix}\left(\begin{pmatrix}{G_1}\\ {G_2}\end{pmatrix} + \begin{pmatrix}s_1'\\ s_2'\end{pmatrix}\right)$$ have intersection of measure zero. 

Now we verify 2). Let $G_1$ and $G_2$ be tiles, $D_1 := S_1, D_2 := S_2$ be the corresponding sets of digits. If we consider the set $D_1 \times D_2$ as the set of shifts, we obtain the required set: 
\begin{gather*}G = \left\{\sum \limits_{i = 1}^{\infty}M^{-i}\begin{pmatrix}d_{i_1} \\ d_{i_2}\end{pmatrix}: d_{i_1} \in D_1, d_{i_2} \in D_2 \right\} \\ = \left\{\sum \limits_{i = 1}^{\infty}\begin{pmatrix}M_1^{-i}d_{i_1} \\ M_2^{-i} d_{i_2}  \end{pmatrix}: d_{i_1} \in D_1,  d_{i_2} \in D_2 \right\} \\ = \left\{\sum \limits_{i = 1}^{\infty}M_1^{-i}d_{i_1} : d_{i_1} \in D_1 \right\} \times \left\{\sum \limits_{i = 1}^{\infty}M_2^{-i}d_{i_2} : d_{i_2} \in D_2 \right\} = G_1 \times G_2.\end{gather*} 
It is easy to notice that if integer shifts of $G_1$ cover $\R^{d_1}$ with one layer and integer shifts of $G_2$ cover $\R^{d_2}$ with one layer, then integer shifts of $G_1 \times G_2$ cover $\R^{d_1 + d_2}$ with one layer: 
$$\bigcup \limits_{(z_1, z_2) \in \Z^{d_1 + d_2}}{\left(G_1 \times G_2 + \begin{pmatrix}z_1\\ z_2\end{pmatrix}\right)} = \bigcup \limits_{z_1 \in \Z^{d_1}}{(G_1 + z_1)} \times \bigcup \limits_{z_2 \in \Z^{d_2}}{(G_2 + z_2)} = \R^{{d_1} + {d_2}}.$$ 
If $\begin{pmatrix}z_1\\ z_2\end{pmatrix} \ne \begin{pmatrix}z_3\\ z_4\end{pmatrix}$, we have 
\begin{gather*}\mu\left(\left(G_1 \times G_2 + \begin{pmatrix}z_1\\ z_2\end{pmatrix}\right) \cap \left(G_1 \times G_2 + \begin{pmatrix}z_3\\ z_4\end{pmatrix}\right)\right) \\ =\mu\left(\left(G_1 + z_1\right) \cap \left(G_1 + z_3\right)\right) \mu\left((G_2 + z_2) \cap (G_2 + z_4)\right) = 0.\end{gather*}

Let us also verify that the set of shifts $D_1 \times D_2$ is a well-defined set of digits. Let the first shift from $D_1 \times D_2$ consist of $d_{k_1} \in D_1$ and $d_{k_2} \in D_2$, the second shift consist of $d_{l_1} \in D_1$ and $d_{l_2} \in D_2$. 
We should check that they are in different classes of equivalency with respect to the matrix $M$.
If 
$$\left(\begin{pmatrix}d_{k_1} \\ d_{k_2}\end{pmatrix} - \begin{pmatrix}d_{l_1} \\ d_{l_2}\end{pmatrix}\right) = \begin{pmatrix}M_1 & 0 \\ 0 & M_2\end{pmatrix} \begin{pmatrix}z_1 \\ z_2\end{pmatrix},$$ then 
$$(d_{k_1} - d_{l_1}) = M_1 z_1, (d_{k_2} - d_{l_2}) = M_2 z_2 \Rightarrow d_{k_1}= d_{l_1}, d_{k_2} = d_{l_2},$$ i.e. it is possible only if the digits are equal. 

Proposition \ref{prop1} is proved. 
\end{proof}


\begin{definition}
An attractor is \textit{irreducible} if it is not a tensor product of other attractors. 
\end{definition}

In what follows we analyse in which cases attractors can be simple sets instead of being fractals: when they are parallelepipeds, polygons, polyhedra. In Section \ref{theorbox} we characterize irreducible attractors which are parallelepipeds and show that all other attractors-parallelepipeds are their tensor product.

\section{The classification of box-attractors}
\label{theorbox}
Let us investigate the structure of ``simple'' attractors that are parallelepipedes (any, not only rectangular). We call such sets \textit{box-attractors}. If the attractor is a tile, we call it a \textit{box-tile}. 

It is clear that every attractor is a product of irreducible attractors. In Theorem \ref{mainbox} we prove that a box-attractor is decomposed into a product of not just attractors but box-attractors (irreducible). Similarly, a box-tile is decomposed into a product of box-tiles. Then in Theorem \ref{tbox2}
 we classify irreducible box-attractors and further give some examples. 

\begin{theorem} \label{mainbox}
Each box-attractor is in a suitable basis a tensor product of  irreducible box-attractors; each box-tile in a suitable basis is a tensor product of irreducible box-tiles. 
\end{theorem}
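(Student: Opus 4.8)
The plan is to reduce the statement to a single geometric lemma about parallelepipeds and then run an induction on the dimension. The central fact I would isolate is the following: if a parallelepiped $P \subset \R^{d_1+d_2}$ happens to be a Cartesian product $P = A \times B$ with respect to a coordinate splitting $\R^{d_1+d_2} = \R^{d_1} \times \R^{d_2}$, then both factors $A$ and $B$ are themselves parallelepipeds of the respective dimensions. Granting this, the theorem follows quickly, because being a parallelepiped is preserved under any invertible linear change of coordinates (a linear image of a cube is again an affine cube), so we may freely pass to the basis in which the product decomposition of the attractor is aligned with the coordinate axes, without losing the box property.

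To prove the lemma I would use the structure of faces of a product polytope. A parallelepiped is a convex polytope combinatorially equivalent to a cube; in particular it is full-dimensional, and from a fixed base vertex there emanate exactly $d_1+d_2$ edges whose direction vectors $w_1,\dots,w_{d_1+d_2}$ are linearly independent and generate $P$. Since the faces of a product $A\times B$ are precisely the products of a face of $A$ with a face of $B$, every $1$-dimensional face (edge) of $P$ is either a product of a vertex of $A$ with an edge of $B$, or a product of an edge of $A$ with a vertex of $B$. Hence each edge vector $w_i$ lies in $\R^{d_1}\times\{0\}$ or in $\{0\}\times\R^{d_2}$. As the $w_i$ are independent and span $\R^{d_1+d_2}$, exactly $d_1$ of them span $\R^{d_1}\times\{0\}$ and exactly $d_2$ span $\{0\}\times\R^{d_2}$; the former generate $A$ and the latter generate $B$, so $A$ and $B$ are parallelepipeds.

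With the lemma in hand, I would argue by induction on $d$. If the box-attractor $G$ is irreducible there is nothing to prove. Otherwise, by the definition of reducibility, in a suitable basis $G=G_1\times G_2$, where the block-diagonal matrix $M$ splits into $M_1,M_2$ and $G_1,G_2$ are attractors (this is the situation inverse to Proposition \ref{prop1}). In this basis $G$ is still a parallelepiped, so the lemma gives that $G_1$ and $G_2$ are parallelepipeds, i.e. box-attractors, of strictly smaller dimension. Applying the inductive hypothesis to each factor and combining the resulting decompositions, which live in complementary coordinate blocks, expresses $G$ as a tensor product of irreducible box-attractors.

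For the tile statement I would run the same induction with two additional bookkeeping points. First, the changes of basis must be taken in $\mathrm{GL}_d(\Z)$, so that the integer matrix $M$, the lattice $\Z^d$, and hence the tiling structure are preserved; the splitting of a reducible box-tile can be arranged integrally because the digit set decomposes as a product $D_1\times D_2$. Second, one must check that each factor is again a tile and not merely an attractor: since $\mu(G)=\mu(G_1)\,\mu(G_2)$ for a product and the measure of any such integer attractor is a positive integer, the equality $\mu(G)=1$ forces $\mu(G_1)=\mu(G_2)=1$. The main obstacle, in my view, is precisely this lemma together with the integrality bookkeeping for tiles: proving cleanly that a Cartesian factor of a parallelepiped is a parallelepiped (the face-product argument above is the natural route), and ensuring that the basis changes exposing the product structure can be chosen in $\mathrm{GL}_d(\Z)$, so that the notion of tile survives the reduction.
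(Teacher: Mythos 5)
Your proposal has a genuine gap, and it sits exactly where you locate the ``bookkeeping.'' Your induction step extracts from reducibility the assertion that \emph{the given} matrix $M$ splits block-diagonally into $M_1, M_2$ generating the factors. But the paper's definition of the tensor product only equips each factor with its \emph{own} matrix: a set-level equality $G = G_1\times G_2$ says nothing about the matrix $M$ with which $G$ was given, and the converse of Proposition~\ref{prop1} is not a definition but precisely what must be proved. The paper's proof supplies this missing bridge geometrically: tiling a parallelepiped by shifted similar copies forces the faces, hence the edges, of the copies to be parallel to those of $G$; therefore $M^{-1}$ permutes the edge directions, and the cycles of this permutation simultaneously block-diagonalize $M$ in the edge basis and produce the irreducible factors as projections of $G$ onto the cycle subspaces. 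This matrix-compatible decomposition is what the theorem is used for afterwards (the corollary on prime determinants and Theorem~\ref{tbox2} both identify irreducibility with the edge permutation being a single cycle). Your argument, which never relates the splitting to $M$, yields only a set-level decomposition; if instead you build matrix-compatibility into the word ``reducible,'' the induction becomes a near-tautology and the real content --- that the box structure forces $M$ to split along edge-direction cycles --- is never established.

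The tile half is where this becomes fatal rather than merely weaker. You assert, without proof, that the splitting basis can be chosen in $\mathrm{GL}_d(\Z)$ and that ``the digit set decomposes as a product $D_1\times D_2$''; under your set-level splitting the factor attractors come with arbitrary real matrices, so there is no reason they are generated by integer expanding matrices with complete digit sets at all. Your measure argument $\mu(G_1)\,\mu(G_2)=1$ then presupposes the factors enjoy the integer-attractor properties (measure in $\N$), i.e.\ exactly the structure you have not established. The paper verifies these points concretely: the shifts of copies adjacent to the projection $G'$ are parallel to the cycle subspace $X$, their restrictions are integer vectors, they form a complete digit set for the block $M_1$ (if $s'_k - s'_l = M_1 z_1$ then $s_k - s_l = M z$, contradicting that the $s_i$ were digits for $M$), and the integer shifts of $G'$ tile $X$ with one layer. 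To your credit, your lemma that a Cartesian factor of a parallelepiped is a parallelepiped is correct, and the face-of-a-product argument for it is sound; but it is peripheral here, since the paper obtains the factors directly as projections along edge cycles, where the box property is immediate.
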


\begin{proof}
Let the initial attractor $G$ be generated by the matrix $M$ and the set of shifts $S$. Since it is an attractor, it is a union of the non-overlapping copies (shifts) $G_1, \ldots, G_k$ of its contraction obtained with the matrix $M^{-1}$. 

First we establish that a parallelepiped can be tiled (without overlappings) with some shifts of a similar parallelepiped only if the faces of small parallelepipeds are parallel to the faces of the big one. As a consequence, the edges of small parallelepipeds are also parallel to the edges of the big parallelepiped. Indeed, for each face of the big parallelepiped, there is a parallel face of a small parallelepiped (for example, the one that is adjacent to it). Each of the parallelepipeds consists of the pairs of parallel faces. Since parallelepipeds have equal number of faces, for each face of a small parallelepiped there is a parallel face of the big parallelepiped. 

Consider the coordinate system with axes along the edges of the initial pa\-ralle\-le\-pi\-ped $G$ with the origin in one of the vertices. Since the edges of the parallelepiped $M^{-1}G$ are parallel to the edges of the parallelepiped $G$, the directions of the edges of $M^{-1}G$ are rearranged directions of the basis vectors. This permutation is split into the cycles, hence the matrix $M^{-1}$ has in the new basis a block-diagonal form, where each block corresponds to its own cycle. For each cycle, let us consider the projection $G'$ of the initial parallelepiped $G$ onto the subspace $X$ spanned by the basis vectors from this cycle. Denote by $M_1$ the block in the matrix $M$ (in the new basis) corresponding to an arbitrary fixed cycle. Consider those of sets $G_1, \ldots, G_k$ that are adjacent to $G'$, denote them by $G_1, \ldots, G_q$. Let them be obtained from the parallelepiped $G_1$ with shifts $s_1, \ldots, s_q \in S$. The shifts $s_1, \ldots, s_q \in S$ are parallel to $X$ since otherwise they have the part orthogonal to $X$ and $G_1, \ldots, G_q$ are not all adjacent to $X$. The union of sets $G_1', \ldots, G_q'$ (the projections of $G_1, \ldots, G_q$ on the $X$) is the parallepiped $G'$. Indeed, since $G$ is tiled with the sets $G_1, \ldots, G_k$, its projection $G'$ onto $X$ is covered with adjacent to $G'$ sets and these projections do not overlap (otherwise $G_1, \ldots, G_q$ overlap since they are shifts of each other along $X$). All sets $G_i'$ are shifted copies of the set $M_1^{-1}G'$. So $G'$ is the box-attractor since it is the union of its non-overlapping dilated copies $G_i'$ obtained under the action of the  matrix $M_1$. In other words, $G'$ is the box-attractor corresponding to the fixed cycle. 

If the initial attractor $G$ is a tile, then the initial set of shifts is the set of digits for the matrix $M$. All the vectors $s_1, \ldots, s_q$ are parallel to the space $X$, therefore we can keep only their coordinates corresponding to $X$ and obtain integer vectors $s_1', \ldots, s_q'$ of smaller dimension (just without some zeros). These vectors form the set of digits to the matrix $M_1$. Indeed, otherwise if $s'_{k} - s'_{l} = M_1z_1$, then $s_k - s_l = Mz$ (since $s_k$ and $s'_k$ differ only in zeros). We can cover $G'$ with the use of vectors $s_1', \ldots, s_q'$ and the matrix $M_1$, so  
$$G' = \bigcup \limits_{s \in \{s_1', \ldots, s_q'\}} {M_1^{-1}(G' + s)},$$
 and  the sets $M_1^{-1}(G' + s)$ are non-overlapping. Hence  
$$G' = \left\{\sum \limits_{i = 1}^{\infty}M_1^{-i}s_i : s_i \in \{s_1', \ldots, s_q'\}\right\}.$$
 Besides that, integer shifts of $G'$ tile the space $X$ with one layer (as integer shifts of $G$ tile $\R^d$ with one layer, and we keep only shifts parallel to $X$, we obtain the tiling of $X$). Thus, $G'$ satisfies the definition of a tile. 
Therefore, a box-tile is decomposed in box-tiles. 
 
The attractor $G'$ corresponding to one cycle is irreducible. Otherwise the permutation of its basis directions decomposes into  independent cycles corresponding to the multipliers in the tensor product. 

Theorem \ref{mainbox} is proved.  
\end{proof}

\begin{theorem} \label{tbox2}
Suppose $M \in \Z^{d \times d}$ is an expanding matrix. Then the following conditions are equivalent: 

a) There exists an irreducible box-attractor, constructed by this matrix and some set of integer shifts.

b) In suitable basis over $\Z^d$ the matrix has the following form: 
$$\begin{pmatrix}
0 & p_1 & 0 & \ldots & 0\\
0 & 0 & p_2 & \ldots & 0\\
\ldots & \ldots & \ldots & \ldots &\ldots \\
0 & 0 & 0 & \ldots & p_{n - 1} \\
\pm p_n & 0 & 0 & \ldots & 0\\
\end{pmatrix},$$ where all $p_i$ are natural numbers. 
Also, the product of all $p_i$ is equal to the determinant of the matrix $M$ in the absolute value. 
\end{theorem}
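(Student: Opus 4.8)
The plan is to prove both implications, treating (b) $\Rightarrow$ (a) as a direct construction and (a) $\Rightarrow$ (b) as the extraction of the cyclic form from the geometry of the attractor together with the integrality of the shifts.

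For (b) $\Rightarrow$ (a) I would work in the basis in which $M$ has the displayed form, so that $Me_{j}=p_{j-1}e_{j-1}$ for $j=2,\dots,n$ and $Me_1=\pm p_n e_n$; equivalently $M^{-1}$ sends the direction $e_{j-1}$ to $e_j$ scaled by $1/p_{j-1}$. Take $G=[0,1]^n$ in this basis. Then $M^{-1}G$ is an axis-parallel box of side $1/p_{j-1}$ along $e_j$, and the $p_1\cdots p_n=|\det M|$ axis-parallel grid translates of $M^{-1}G$ tile $G$ with overlaps of measure zero. I would check that the corresponding shifts, obtained as $s=M(\text{grid corner})$ from corners $\sum_j (a_j/p_{j-1})e_j$, are integer vectors (the factor $M$ clears the denominators $p_{j-1}$), so $G$ is a box-attractor with an integer shift set. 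Finally $G$ is irreducible: the permutation of the edge directions induced by $M^{-1}$ is a single $n$-cycle, so by the cycle-decomposition argument in the proof of Theorem \ref{mainbox} it admits no splitting as a tensor product.

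For (a) $\Rightarrow$ (b), let $G$ be the given irreducible box-attractor. As in the proof of Theorem \ref{mainbox}, in coordinates along the edges $v_1,\dots,v_n$ of $G$ the map $M^{-1}$ carries each edge direction to another edge direction with a positive scaling, and irreducibility forces this permutation to be one $n$-cycle; after reindexing this reads $M v_{j}=p_{j-1}v_{j-1}$ for $j\ge 2$ and $M v_1=\varepsilon\,p_n v_n$ with $\varepsilon=\pm1$. Here each $p_i$ is the number of small boxes stacked along one edge direction, hence a natural number, and since the $|\det M|$ subtiles fill the $p_1\cdots p_n$ cells of the grid we get $\prod_i p_i=|\det M|$; the sign $\varepsilon$ is then forced by matching $\det M$ with the determinant $\varepsilon(-1)^{n-1}\prod_i p_i$ of the displayed matrix. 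Thus in the edge basis the matrix of $M$ already has the required form, \emph{provided} the $v_i$ lie in $\Z^d$.

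The crux is therefore to show that the edges $v_i$ are integer vectors, and this is exactly where the hypothesis of integer shifts enters. From the grid structure the shift set equals $\{\sum_k b_k v_k : 0\le b_k<p_k\}$; since these are assumed integer, comparing two shifts that differ only in the $k$-th coordinate gives $v_k\in\Z^d$ whenever $p_k\ge 2$. For a direction with $p_k=1$ the relation $v_k=M v_{k+1}$ lets me propagate integrality: starting from any index and moving along the cycle to the first index with $p\ge 2$ (one exists because $\prod_i p_i=|\det M|\ge 2$), I pull integrality back through the $p=1$ arrows using that $M$ has integer entries. Hence all $v_i\in\Z^d$, they form a basis of integer vectors, and in it $M$ has the stated form. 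I expect this integrality step to be the main obstacle, both because it is the only place the integer-shift hypothesis is genuinely used and because one must check that the propagation closes up correctly around the cycle; the remaining determinant and sign bookkeeping is routine.
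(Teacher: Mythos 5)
Your proposal is correct, and in its two key steps it takes a genuinely different route from the paper. In the direction (a) $\Rightarrow$ (b), the paper does not argue with $M$ itself: it first passes to a power $M_1=M^{k}$ that fixes every edge direction (proving along the way that an attractor for $M$ with integer shifts is also an attractor for $M^{k}$ with integer shifts, via $G=\bigcup M^{-2}(G+s'+Ms)$ etc.), and then shows each edge $e$ is integer by observing that the neighbour of $M_1^{-1}G$ along $e$ is $M_1^{-1}(G+s_0)$, forcing $e=s_0\in\Z^d$. You instead stay at the single-step level: from the grid of subtiles you read off that shifts adjacent along the edge in direction $v_{k+1}$ differ by $M\bigl(v_{k+1}/p_k\bigr)=v_k$, giving $v_k\in\Z^d$ whenever $p_k\ge 2$, and you handle the degenerate indices by propagating integrality through the relations $v_k=Mv_{k+1}$ (valid when $p_k=1$) around the cycle, terminating at the first index with $p\ge 2$, which exists since $\prod p_i=|\det M|\ge 2$ (the paper makes the same observation in its Remark~\ref{rem1}). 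Your propagation trick replaces the paper's power-passing lemma entirely, at the cost of the case split on $p_k$; the paper's route avoids that case split but needs the auxiliary composition argument. In the direction (b) $\Rightarrow$ (a) you are also more direct: you verify that for $G=[0,1]^n$ the shifts $s=Mc$ are automatically integer because $M$ clears the denominators $p_{j-1}$, whereas the paper only records that the piece translations are rational and then rescales the whole picture by a large integer $N$ to force integrality --- your computation shows this rescaling is unnecessary. One small imprecision: the shift set is not exactly $\{\sum_k b_k v_k : 0\le b_k<p_k\}$ but a translate $\{s^*+\sum_k b_k v_k\}$ of it (the base point depends on the position of $G$ relative to the origin, and signs may flip at the wrap-around index); since your argument only uses \emph{differences} of shifts, this does not affect the conclusion, but the set equality as stated should be softened. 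Both your proof and the paper's rely equally on the fact, established in Theorem~\ref{mainbox}, that $M^{-1}$ permutes the edge directions, and on the lattice structure of the subdivision (which the paper likewise asserts with a figure rather than proves), so you are not assuming more than the paper does.
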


\begin{proof}
Let $M$ be an integer expanding matrix.

Let us prove that a) implies b). Consider an irreducible box-attractor $G$ generated by the matrix $M$ and a set of integer shifts $S$. 
The matrix $M^{-1}$ permutes the directions along edges. Hence, there is its power $M^{-k}$ acting as a contraction along the edges. Let us show that the attractor $G$ can be also generated by the matrix $M^k$ with some shifts. Indeed, 
\begin{gather*}G = \bigcup \limits_{s \in \{s_1, \ldots, s_m\}} {M^{-1}(G + s)} = \bigcup \limits_{s \in \{s_1, \ldots, s_m\}} {M^{-1}\left(\bigcup \limits_{s' \in \{s_1, \ldots, s_m\}} {M^{-1}(G + s')} + s\right)} \\= \bigcup \limits_{s, s' \in \{s_1, \ldots, s_m\}} {M^{-2}(G + s' + Ms).}
\end{gather*}
Therefore, we may assume that the generating matrix is the matrix $M^{2}$ and the corresponding set of its shifts is the set $\{G + s' + Ms \mid s, s' \in \{s_1, \ldots, s_m\}\}$. The shifts are integer as the matrix $M$ is integer. Similarly, it holds for bigger powers, in particular, for the matrix $M^k$. 

Now prove that if a box-attractor $G$ is generated by an integer matrix $M_1$ that acts as the expansion along its edges (in our case $M_1 = M^k$) and integer shifts, then its edges are vectors with integer coordinates. The parallelepiped $G$ is split as a lattice (see Fig. \ref{zproof}). Fix an arbitrary edge $e$; let the matrix $M_1^{-1}$ contract it in $h$ times. There are several contracted sets adjacent to the edge $e$, the first one is the set $M_1^{-1}G$. Let the next one be the set $M_1^{-1}(G + s_0)$ (the neighbour along the edge $e$). Due to properties of the matrix $M_1$, the shift $s_0$ are along the edge $e$. If we shift the set $M_1^{-1}G$ by the vector $s_1 = M_1^{-1}s_0 = s_0 / h$, we obtain exactly the set $M_1^{-1}(G + s_0)$. On the other hand, these two sets are adjacent to each other as they are the neighbors among sets intersecting with the edge $e$. Consequently, the edge $e / h$ of the parallelepiped $M_1^{-1}G$ moves by the shift $s_1$ to the edge of the parallelepiped $M_1^{-1}(G + s_0)$. Thus $|e| / h = |s_1| = |s_0| / h$, and then, as $e$ and $s_0$ have the same direction, they are equal, hence, $e$ is an integer vector.  

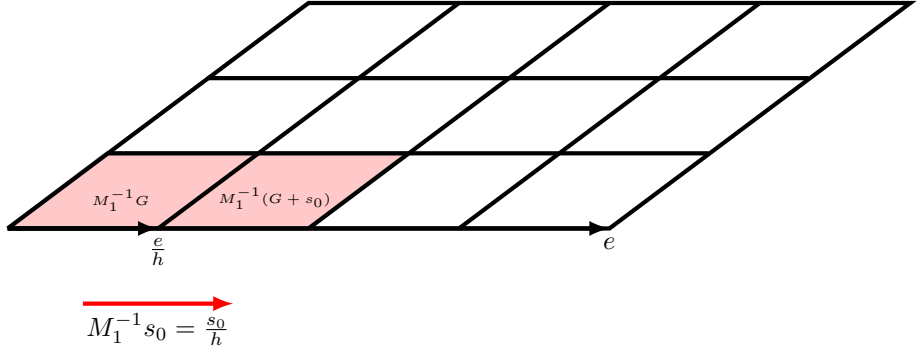
\begin{figure}[ht]
\centering
\begin{tikzpicture}[scale = 0.5]
	\coordinate (X1) at (-8cm, -3cm);
	\coordinate (X2) at (0cm, 3cm);
	\coordinate (X3) at (16cm, 3cm);
	\coordinate (X4) at (8cm, -3cm);

	\draw[ultra thick, black, fill=red!70, opacity = 0.3] (X1) -- ($(-2.68*2, -1)$) -- ($(-2.68*2 + 8, -1)$) -- ($(0, -3)$) -- (X1);

	\draw[ultra thick, black] (X1)--(X2)--(X3)--(X4)--(X1);
	\draw[ultra thick, black, -latex] (X1) -- (X4);

	\draw[ultra thick, black, -latex] (X1) -- ($(-4, -3)$);
	\draw[ultra thick, black] ($(-4, -3)$) -- ($(4, 3)$);
	\draw[ultra thick, black] ($(0, -3)$) -- ($(8, 3)$);
	\draw[ultra thick, black] ($(4, -3)$) -- ($(12, 3)$);

	\draw[ultra thick, black] ($(-2.68*2, -1)$) -- ($(-2.68*2 + 16, -1)$);
	\draw[ultra thick, black] ($(-1.35*2, 1)$) -- ($(16 - 2*1.35, 1)$);
	\node at (X4)[below] {$e$};
	\node at (barycentric cs:X1=7,X3=1) {\tiny{$M_1^{-1}G$}};
\node at ($(-4, -3)$)[below] {$\frac{e}{h}$};
	\node at (-0.9, -2.2) {\tiny{$M_1^{-1}(G + s_0)$}};
	\draw[ultra thick, red, -latex] ($(-6, -5)$)--($(-2, -5)$);
	\node at (-4, -5)[below] {$M_1^{-1}s_0 = \frac{s_0}{h}$};
	\end{tikzpicture}
\caption{The proof that the edges of $G$ are integer.}
\label{zproof}
\end{figure}

Fix a vertex of the parallelellepiped $G$ and consider its adjacent edges. Consider the integer basis consisting of those integer vectors. The matrix $M^{-1}$ acts as a cycling permutation of this basis. We renumber and change the directions of basis vectors to assume that the image of the edge $e_1$ under the action of $M^{-1}$ is $e_2 \cdot k_1$, the image of $e_2$ is $e_3 \cdot k_2$, etc., the image of $e_n$ is $\pm e_1 \cdot k_n$ (the new basis is also integer). Since the initial parallelepiped is tiled with contracted parallelepipeds, we obtain $k_i = 1/p_i$ where $p_i$ is a natural number (it is the number of parts obtained by splitting the $i$-th edge). In total, there are $p_1 \cdots p_n$ parts, therefore this product is equal to the determinant of the initial matrix in the absolute value (and it is the number of digits in case of a tile). Then the matrix $M^{-1}$ in this basis has the following form: 
 $$\begin{pmatrix}
0 & 0 & \ldots & 0 & \pm \frac{1}{p_n} \\
\frac{1}{p_1} & 0 & 0 & \ldots & 0 \\
0 & \frac{1}{p_2} & 0 & \ldots & 0 \\
\ldots & \ldots & \ldots & \ldots & \ldots \\
0 & 0 & \ldots & \frac{1}{p_{n - 1}} & 0 \\
\end{pmatrix}.$$ 
Thus, the matrix $M$ in this basis has the form  
$$\begin{pmatrix}
0 & p_1 & 0 & \ldots & 0\\
0 & 0 & p_2 & \ldots & 0\\
\ldots & \ldots & \ldots & \ldots &\ldots \\
0 & 0 & 0 & \ldots & p_{n - 1} \\
\pm p_n & 0 & 0 & \ldots & 0\\
\end{pmatrix},$$ as it was required to prove for the necessity.  

Now show that b) implies a). From the form of matrix (in some basis) it follows that under the action of contraction $M^{-1}$ the basis directions permute in cycle. Hence, any attractor generated by $M$ is irreducible (otherwise the generated permutation has several cycles). We construct the parallelepiped $G$ spanned on the vectors of our integer basis. Due to the form of the matrix $M$, each edge of the parallelepiped $G$ is split under the action of the contraction $M^{-1}$ into the integer number of parts (depending on $p_i$). Hence, we can tile the whole set $G$ with shifts of the set $M^{-1}G$. The shortest vector of the shifts along the edge $e$ of the parallelepiped $G$ is $e$ divided by some $N$, where $N$ is an integer. The vector $e$ is integer, therefore in the  initial basis all vectors of shifts along the edge $e$ have rational coordinates (these vectors have the form $k_e^j e / N_e$). We multiply all shift vectors (for all edges $e$) simultaneously by a big natural number $N$ so that all new shift vectors are integer. The parallelepiped $G$ also expands to the parallelepiped $G_1$. Then the attractor $G_1$ generated by these expanded integer shifts (which have the form $Nk_e^j e / N_e$) and the matrix $M$ is also an irreducible box-attractor that is required.

Theorem \ref{tbox2} is proved. 

\begin{remark} \label{rem1}
Let us fix an arbitrary irreducible box-attractor in its cyclic form from b). Then note that not all $p_i = 1$. Otherwise we have $M^qe_1 = e_1$, where $q$ is the length of the cycle; it is a contradiction with the dilation property of the matrix $M^{-1}$. 
\end{remark}
\end{proof}

\begin{corollary}
A box-attractor generated by a dilation matrix with prime determinant is irreducible. 
\end{corollary}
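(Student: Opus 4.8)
The plan is to argue by contraposition: I will show that a reducible box-attractor must be generated by a matrix whose determinant is composite, so primality of $|\det M|$ forces irreducibility. The engine is the multiplicativity of the determinant across the block-diagonal decomposition coming from Theorem \ref{mainbox} and Proposition \ref{prop1}, together with the fact that each irreducible box factor has determinant at least $2$.

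First I would apply Theorem \ref{mainbox} to the given box-attractor $G$: in a suitable integer basis it is a tensor product $G = G_1 \times \cdots \times G_r$ of irreducible box-attractors, and correspondingly (Proposition \ref{prop1}) the generating matrix $M$ is block-diagonal with expanding integer blocks $M_1, \ldots, M_r$, where $M_i$ generates $G_i$. If $r = 1$ the attractor is already irreducible and there is nothing to prove, so the reducible case is exactly $r \geq 2$.

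Next I would use multiplicativity: block-diagonality gives $|\det M| = \prod_{i=1}^{r} |\det M_i|$. The key estimate is $|\det M_i| \geq 2$ for every $i$. This is where Remark \ref{rem1} enters: by Theorem \ref{tbox2} the block $M_i$ has the cyclic shape with $|\det M_i|$ equal to the product $p_1^{(i)} \cdots p_{n_i}^{(i)}$ of natural numbers, and Remark \ref{rem1} guarantees that not all of these equal $1$, so the product is at least $2$. (One can also see this directly: an expanding integer matrix has all eigenvalues of modulus exceeding $1$, so its determinant is an integer larger than $1$ in absolute value, hence at least $2$.)

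Putting the pieces together, for $r \geq 2$ the number $|\det M| = \prod_{i} |\det M_i|$ is a product of at least two integers each at least $2$, hence composite, contradicting the assumption that $|\det M|$ is prime. Therefore $r = 1$ and $G$ is irreducible. I anticipate no real difficulty; the only point deserving a sentence of care is checking that the blocks $M_i$ produced by Theorem \ref{mainbox} are honest integer expanding matrices, so that Remark \ref{rem1} applies and their determinants are genuine integers bounded below by $2$ — but this is furnished directly by the integer cyclic normal form in Theorem \ref{tbox2}.
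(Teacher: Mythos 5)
Your proposal is correct and follows essentially the same route as the paper: decompose a reducible box-attractor via Theorem \ref{mainbox} into irreducible box factors, use multiplicativity of the determinant over the resulting block-diagonal form, and invoke Remark \ref{rem1} (via the cyclic integer normal form of Theorem \ref{tbox2}) to see that each block contributes an integer factor at least $2$, contradicting primality. Your explicit attention to the blocks being integer expanding matrices is a welcome precision that the paper's one-line argument leaves implicit.
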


Indeed, otherwise the matrix is decomposed into the block-diagonal parts corresponding to irreducible attractors, then the determinant is a product of the determinants that are not all equal to one because of Remark \ref{rem1}. 

Let us illustrate the theorems with several examples. 

\begin{example}
Consider the matrix $M = \begin{pmatrix}0 & -2 \\ 1 & 0\end{pmatrix}$ with the determinant $2$ which we have already met in Section \ref{intro}. It consists of one cyclic block and generates a box-tile, for example, with digits $\begin{pmatrix}0 \\ 0\end{pmatrix}$, $\begin{pmatrix}1 \\ 0\end{pmatrix}$; see Fig. \ref{rect2dim}. 
\end{example}

\begin{example}
{
Let us construct an irreducible box-attractor in $\R^3$. We use the matrix 
$$M = \begin{pmatrix}
0 & 1 & 0\\
0 & 0 & 1\\
2 & 0 & 0\\
\end{pmatrix}.$$ 
Since $|\det{M}| = 2$, there are two shifts, so the box-attractor is divided with the contraction $M^{-1}$ into two parts. To analyse how it works, consider the matrix  
$$M^{-1} = \begin{pmatrix}
0 & 0 & -\frac{1}{2}\\
1 & 0 & 0\\
0 & 1 & 0 \\
\end{pmatrix}.$$
The partition is represented in Fig. \ref{box1}.
}
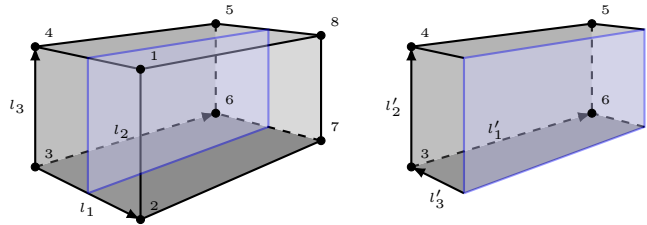
\begin{figure}[ht]
  \centering
{
\begin{tikzpicture}
	\coordinate (P1) at (-7cm,1.5cm); 
	\coordinate (P2) at (8cm,1.5cm); 
	\coordinate (A1) at (0em,0cm);
	\coordinate (A2) at (0em,-2cm);
	\coordinate (A3) at ($(P1)!.8!(A2)$); 
	\coordinate (A4) at ($(P1)!.8!(A1)$);
	\coordinate (A7) at ($(P2)!.7!(A2)$);
	\coordinate (A8) at ($(P2)!.7!(A1)$);
	\coordinate (A5) at
	  (intersection cs: first line={(A8) -- (P1)},
			    second line={(A4) -- (P2)});
	\coordinate (A6) at
	  (intersection cs: first line={(A7) -- (P1)}, 
			    second line={(A3) -- (P2)});
	\fill[gray!90] (A2) -- (A3) -- (A6) -- (A7) -- cycle; 
	\fill[gray!50] (A3) -- (A4) -- (A5) -- (A6) -- cycle; 
	\fill[gray!30] (A5) -- (A6) -- (A7) -- (A8) -- cycle; 
	\draw[thick,dashed] (A5) -- (A6);
	\draw[thick,dashed, -latex] (A3) -- (A6);
	\draw[thick,dashed] (A7) -- (A6);
	\fill[gray!50,opacity=0.2] (A1) -- (A2) -- (A3) -- (A4) -- cycle; 
	\fill[gray!90,opacity=0.2] (A1) -- (A4) -- (A5) -- (A8) -- cycle; 
	\draw[thick] (A1) -- (A2);
	\draw[thick, -latex] (A3) -- (A4);
	\draw[thick] (A7) -- (A8);
	\draw[thick] (A1) -- (A4);
	\draw[thick] (A1) -- (A8);
	\draw[thick, -latex] (A3) -- (A2);
	\draw[thick] (A2) -- (A7);
	\draw[thick] (A4) -- (A5);
	\draw[thick] (A8) -- (A5);
	\coordinate (W32) at (barycentric cs:A2=1,A3=1);
	\coordinate (W41) at (barycentric cs:A4=1,A1=1);
	\coordinate (W58) at (barycentric cs:A5=1,A8=1);
	\coordinate (W67) at (barycentric cs:A6=1,A7=1);
	\draw[blue, thick, fill = blue!20, opacity=0.4] (W32) -- (W41) -- (W58) -- (W67) -- (W32);
	\node [below] at (barycentric cs:A2=1,A3=1) {\tiny $l_1$};
	\node [left] at (barycentric cs:A3=1,A4=1) {\tiny $l_3$};
	\node [left] at (barycentric cs:A3=1,A6=1.5,A4=0.1) {\tiny $l_2$};
	\foreach \i in {1,2,...,8}
	{
	  \draw[fill=black] (A\i) circle (0.15em)
	    node[above right] {\tiny \i};
	}
	\coordinate (PP1) at (-2cm,1.5cm);
	\coordinate (PP2) at (13cm,1.5cm);
	\coordinate (PA1) at (5cm,0cm);
	\coordinate (PA2) at (5cm,-2cm);
	\coordinate (PA3) at ($(PP1)!.8!(PA2)$); 
	\coordinate (PA4) at ($(PP1)!.8!(PA1)$);
	\coordinate (PA7) at ($(PP2)!.7!(PA2)$);
	\coordinate (PA8) at ($(PP2)!.7!(PA1)$);
	\coordinate (PA5) at
	  (intersection cs: first line={(PA8) -- (PP1)},
			    second line={(PA4) -- (PP2)});
	\coordinate (PA6) at
	  (intersection cs: first line={(PA7) -- (PP1)}, 
			    second line={(PA3) -- (PP2)});
	\coordinate (W32) at (barycentric cs:PA2=1,PA3=1);
	\coordinate (W41) at (barycentric cs:PA4=1,PA1=1);
	\coordinate (W58) at (barycentric cs:PA5=1,PA8=1);
	\coordinate (W67) at (barycentric cs:PA6=1,PA7=1);
	\fill[gray!90] (W32) -- (PA3) -- (PA6) -- (W67) -- cycle; 
	\fill[gray!50] (PA3) -- (PA4) -- (PA5) -- (PA6) -- cycle; 
	\fill[gray!30] (PA5) -- (PA6) -- (W67) -- (W58) -- cycle; 
	\draw[thick,dashed] (PA5) -- (PA6);
	\draw[thick,dashed, -latex] (PA3) -- (PA6);
	\draw[thick,dashed] (W67) -- (PA6);
	\fill[gray!50,opacity=0.2] (W41) -- (W32) -- (PA3) -- (PA4) -- cycle; 
	\fill[gray!90,opacity=0.2] (W41) -- (PA4) -- (PA5) -- (W58) -- cycle; 
	\draw[thick, -latex] (PA3) -- (PA4);
	\draw[thick] (W41) -- (PA4);
	\draw[thick, -latex] (W32) -- (PA3);
	\draw[thick] (PA4) -- (PA5);
	\draw[thick] (W58) -- (PA5);	
	\draw[blue, thick, fill = blue!20, opacity=0.4] (W32) -- (W41) -- (W58) -- (W67) -- (W32);
	\node [below] at (barycentric cs:PA3=1,W32=1) {\tiny $l_3'$};
	\node [left] at (barycentric cs:PA3=1,PA4=1) {\tiny $l_2'$};
	\node [left] at (barycentric cs:PA3=1,PA6=1.5,PA4=0.15) {\tiny $l_1'$};
	\foreach \i in {4, 5, 6, 3}
	{
	  \draw[fill=black] (PA\i) circle (0.15em)
	    node[above right] {\tiny \i};
	}
\end{tikzpicture}
\caption{The example of a box-attractor with two shifts.}
\label{box1}
}
\end{figure}
\end{example}

\begin{example}
{
The matrix 
$$M = \begin{pmatrix}
0 & 3 & 0\\
0 & 0 & 2\\
2 & 0 & 0\\
\end{pmatrix}, \quad \text{where } M^{-1}=  
\begin{pmatrix}
0 & 0 & \frac{1}{2}\\
\frac{1}{3} & 0 & 0\\
0 & \frac{1}{2} & 0 \\
\end{pmatrix},$$
generates an irreducible box-attractor with $12$ shifts. 
The partition is depicted in Fig. \ref{box2}.
}
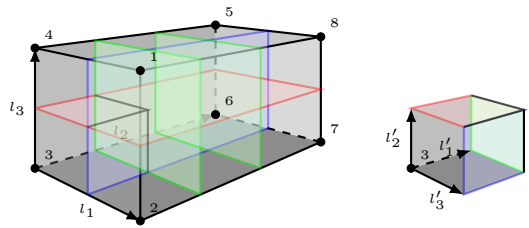
\begin{figure}[ht]
\centering
 {

\begin{tikzpicture}
	\coordinate (P1) at (-7cm,1.5cm);
	\coordinate (P2) at (8cm,1.5cm);
	\coordinate (A1) at (0em,0cm); 
	\coordinate (A2) at (0em,-2cm); 
	\coordinate (A3) at ($(P1)!.8!(A2)$);
	\coordinate (A4) at ($(P1)!.8!(A1)$);
	\coordinate (A7) at ($(P2)!.7!(A2)$);
	\coordinate (A8) at ($(P2)!.7!(A1)$);
	\coordinate (A5) at
	  (intersection cs: first line={(A8) -- (P1)},
			    second line={(A4) -- (P2)});
	\coordinate (A6) at
	  (intersection cs: first line={(A7) -- (P1)}, 
			    second line={(A3) -- (P2)});
	\fill[gray!90] (A2) -- (A3) -- (A6) -- (A7) -- cycle; 
	\fill[gray!50] (A3) -- (A4) -- (A5) -- (A6) -- cycle; 
	\fill[gray!30] (A5) -- (A6) -- (A7) -- (A8) -- cycle; 
	\draw[thick,dashed] (A5) -- (A6);
	\draw[thick,dashed, -latex] (A3) -- (A6);
	\draw[thick,dashed] (A7) -- (A6);
	\fill[gray!50,opacity=0.2] (A1) -- (A2) -- (A3) -- (A4) -- cycle; 
	\fill[gray!90,opacity=0.2] (A1) -- (A4) -- (A5) -- (A8) -- cycle; 
	\draw[thick] (A1) -- (A2);
	\draw[thick, -latex] (A3) -- (A4);
	\draw[thick] (A7) -- (A8);
	\draw[thick] (A1) -- (A4);
	\draw[thick] (A1) -- (A8);
	\draw[thick, -latex] (A3) -- (A2);
	\draw[thick] (A2) -- (A7);
	\draw[thick] (A4) -- (A5);
	\draw[thick] (A8) -- (A5);
	\coordinate (W32) at (barycentric cs:A2=1,A3=1);
	\coordinate (W41) at (barycentric cs:A4=1,A1=1);
	\coordinate (W58) at (barycentric cs:A5=1,A8=1);
	\coordinate (W67) at (barycentric cs:A6=1,A7=1);
	\draw[blue, thick, fill = blue!20, opacity=0.4] (W32) -- (W41) -- (W58) -- (W67) -- (W32);
	\node [below] at (barycentric cs:A2=1,A3=1) {\tiny $l_1$};
	\node [left] at (barycentric cs:A3=1,A4=1) {\tiny $l_3$};
	\node [left] at (barycentric cs:A3=1,A6=1.5,A4=0.1) {\tiny $l_2$};
\coordinate (W34) at (barycentric cs:A3=1,A4=1);
	\coordinate (W12) at (barycentric cs:A1=1,A2=1);
	\coordinate (W78) at (barycentric cs:A7=1,A8=1);
	\coordinate (W56) at (barycentric cs:A5=1,A6=1);
\draw[red, thick, fill = red!20, opacity=0.4] (W34) -- (W12) -- (W78) -- (W56) -- (W34);

\coordinate (WF45) at (barycentric cs:A4=2,A5=1);
	\coordinate (WF18) at (barycentric cs:A1=2,A8=1);
	\coordinate (WF27) at (barycentric cs:A2=2,A7=1);
	\coordinate (WF36) at (barycentric cs:A3=2,A6=1);
\draw[green, thick, fill = green!20, opacity=0.4] (WF45) -- (WF18) -- (WF27) -- (WF36) -- (WF45);

\coordinate (WB45) at (barycentric cs:A4=1,A5=2);
	\coordinate (WB18) at (barycentric cs:A1=1,A8=2);
	\coordinate (WB27) at (barycentric cs:A2=1,A7=2);
	\coordinate (WB36) at (barycentric cs:A3=1,A6=2);
\draw[green, thick, fill = green!20, opacity=0.1] (WF45) -- (WF18) -- (WF27) -- (WF36) -- (WF45);
\draw[green, thick, fill = green!20, opacity=0.4] (WB45) -- (WB18) -- (WB27) -- (WB36) -- (WB45);

\coordinate (Wmy1) at (barycentric cs:W34=1,W12=1);
\coordinate (W68) at (barycentric cs:A6=1,A8=1);
\coordinate (Wmy2) at (barycentric cs:Wmy1=2,W68=1);
\coordinate (Wmy3) at (barycentric cs:W34=2,W56=1);
\coordinate (Wmy4) at (barycentric cs:W32=2,W67=1);

\draw[black, thick, opacity=0.4] (Wmy1) -- (Wmy2) -- (Wmy3);
\draw[black, thick, opacity=0.4] (Wmy2) -- (Wmy4);

	\foreach \i in {1,2,...,8}
	{
	  \draw[fill=black] (A\i) circle (0.15em)
	    node[above right] {\tiny \i};
	}
	\coordinate (PP1) at (-2cm,1.5cm);
	\coordinate (PP2) at (13cm,1.5cm);
	\coordinate (PA1) at (5cm,0cm);
	\coordinate (PA2) at (5cm,-2cm);
	\coordinate (PA3) at ($(PP1)!.8!(PA2)$); 
	\coordinate (PA4) at ($(PP1)!.8!(PA1)$);
	\coordinate (PA7) at ($(PP2)!.7!(PA2)$);
	\coordinate (PA8) at ($(PP2)!.7!(PA1)$);
	\coordinate (PA5) at
	  (intersection cs: first line={(PA8) -- (PP1)},
			    second line={(PA4) -- (PP2)});
	\coordinate (PA6) at
	  (intersection cs: first line={(PA7) -- (PP1)}, 
			    second line={(PA3) -- (PP2)});
	\coordinate (W32) at (barycentric cs:PA2=1,PA3=1);
	\coordinate (W41) at (barycentric cs:PA4=1,PA1=1);
	\coordinate (W58) at (barycentric cs:PA5=1,PA8=1);
	\coordinate (W67) at (barycentric cs:PA6=1,PA7=1);
	
	\coordinate (W34) at (barycentric cs:PA3=1,PA4=1);
	\coordinate (W12) at (barycentric cs:PA1=1,PA2=1);
	\coordinate (W78) at (barycentric cs:PA7=1,PA8=1);
	\coordinate (W56) at (barycentric cs:PA5=1,PA6=1);

\coordinate (WF45) at (barycentric cs:PA4=2,PA5=1);
	\coordinate (WF18) at (barycentric cs:PA1=2,PA8=1);
	\coordinate (WF27) at (barycentric cs:PA2=2,PA7=1);
	\coordinate (WF36) at (barycentric cs:PA3=2,PA6=1);

\coordinate (WB45) at (barycentric cs:PA4=1,PA5=2);
	\coordinate (WB18) at (barycentric cs:PA1=1,PA8=2);
	\coordinate (WB27) at (barycentric cs:PA2=1,PA7=2);
	\coordinate (WB36) at (barycentric cs:PA3=1,PA6=2);

\coordinate (Wmy1) at (barycentric cs:W34=1,W12=1);
\coordinate (W68) at (barycentric cs:PA6=1,PA8=1);
\coordinate (Wmy2) at (barycentric cs:Wmy1=2,W68=1);
\coordinate (Wmy3) at (barycentric cs:W34=2,W56=1);
\coordinate (Wmy4) at (barycentric cs:W32=2,W67=1);
\coordinate (Wmy5) at (barycentric cs:PA3=2,PA6=1);
\fill[gray!90] (W32) -- (PA3) -- (Wmy5) -- (Wmy4) -- cycle; 
\fill[gray!50] (PA3) -- (W34) -- (Wmy3) -- (Wmy5) -- cycle; 
\draw[blue, thick, fill = blue!20, opacity=0.4] (Wmy4) -- (W32) -- (Wmy1)--(Wmy2)--(Wmy4);
	\fill[red!20, opacity = 0.4] (Wmy1) -- (W34) -- (Wmy3) -- (Wmy2) -- cycle;
\fill[green!20, opacity = 0.4] (Wmy3) -- (Wmy5) -- (Wmy4) -- (Wmy2) -- cycle;
	
\draw[red, thick, opacity=0.4] (Wmy1) -- (W34) -- (Wmy3);
\draw[green, thick, opacity=0.4] (Wmy3) -- (Wmy5) -- (Wmy4);

\draw[thick, -latex] (PA3) -- (W34);
	\draw[thick, -latex] (PA3) -- (W32);
	\draw[black, thick, opacity=0.7] (Wmy1) -- (Wmy2) -- (Wmy3);

\draw[black, thick, opacity=1] (Wmy2) -- (Wmy4);
\draw[thick,dashed, -latex] (PA3) -- (Wmy5);
\node [below] at (barycentric cs:PA3=1,W32=1) {\tiny $l_3'$};
	\node [left] at (barycentric cs:PA3=1,W34=1) {\tiny $l_2'$};
	\node at (barycentric cs:PA3=1,Wmy5=1,Wmy3=0.5) {\tiny $l_1'$};

	\foreach \i in {3}
	{
	  \draw[fill=black] (PA\i) circle (0.15em)
	    node[above right] {\tiny \i};
	}
\end{tikzpicture}
}
\caption{The example of a box-attractor with twelve shifts.}
\label{box2}
\end{figure}
\end{example}

\section{Box-Haar systems}
\label{boxxhaar}
In this section we recall the definition of the Haar bases and spot the advantages of the Haar systems constructed by the box-tiles described in Section \ref{theorbox}. 

Using an arbitrary tile in $\R^d$ it is possible to construct the basis in the space $L_2(\R^d)$. Let us consider in details this construction described, for example, in \cite{GrMad}. 

Recall the construction of the classical one-dimensional Haar system on the segment $[0, 1]$. It is performed in several levels. First we define the function 
$$h_0(x) = \chi[0, 1).$$
Then the functions of level $j$ are constructed by the general formula 
\begin{equation*}
h_{2^j + k}(t) = 
 \begin{cases}
	(\sqrt{2})^{j}, & t \in [\frac{k}{2^j}, \frac{k + \frac{1}{2}}{2^{j}}) \\

    -(\sqrt{2})^{j}, & t \in [\frac{k + \frac{1}{2}}{2^{j}}, \frac{k + 1}{2^j}), \quad \quad \quad k = 0, \ldots, 2^{j} - 1. \\

	0 & otherwise, \\
 \end{cases}
\end{equation*}

The union of $h_0(x)$ and the functions of all levels forms the basis in $L_2[0, 1]$. The first few basis functions are shown in Fig. \ref{haar0} -- \ref{haar3}. 

\begin{figure}[ht]
\begin{minipage}{0.45\linewidth}
\begin{tikzpicture}
\draw[thick, black, -latex] 
(0, 5) -- ($(5, 5)$);
\draw [thick, black, -latex] 
(0, 2) -- ($(0, 9)$);
\node at (5, 5) [below]{$x$};
\node at (0, 9) [left]{$y$};
\draw[ultra thick, red] 
(0, 7.5) -- ($(2.5, 7.5)$);
\draw[dashed, thick, red] 
(0, 7.5) -- ($(0, 5)$);
\draw[dashed, thick, red] 
(2.5, 5) -- ($(2.5, 7.5)$);

\node at (0, 5) [below right] {$0$};
\node at (2.5, 5) [below right] {$1$};
\node at (0, 7.5) [left] {$1$};
\node at (0, 2.5) [left] {$-1$};
\node at (0.3, 8.3) [right] {\large $h_0 = \chi_{[0,  1]}$};
\end{tikzpicture}
\caption{The function $h_0$ that is beyond levels of Haar system.}
\label{haar0}
\end{minipage}
\begin{minipage}{0.08\linewidth}
\quad \quad
\end{minipage}
\begin{minipage}{0.45\linewidth}
\begin{tikzpicture}
\draw[thick, black, -latex] 
(0, 5) -- ($(5, 5)$);
\draw [thick, black, -latex] 
(0, 2) -- ($(0, 9)$);
\node at (5, 5) [below]{$x$};
\node at (0, 9) [left]{$y$};
\draw[ultra thick, red] 
(0, 7.5) -- ($(1.25, 7.5)$);
\draw[dashed, thick, red] 
(0, 7.5) -- ($(0, 5)$);
\draw[dashed, thick, red] 
(1.25, 5) -- ($(1.25, 7.5)$);

\draw[ultra thick, red] 
(2.5, 2.5) -- ($(1.25, 2.5)$);
\draw[dashed, thick, red] 
(2.5, 2.5) -- ($(2.5, 5)$);
\draw[dashed, thick, red] 
(1.25, 5) -- ($(1.25, 2.5)$);

\node at (0, 5) [below right] {$0$};

\node at (1.25, 5) [below right] {$0.5$};

\node at (2.5, 5) [below right] {$1$};

\node at (0, 7.5) [left] {$1$};

\node at (0, 2.5) [left] {$-1$};

\node at (0.3, 8.3) [right] {\large $h_1 = \chi_{[0,  \frac{1}{2}]} - \chi_{[\frac{1}{2}, 1]}$};
\end{tikzpicture}
\caption{The function $h_1$ that is in the zero level of Haar system.}
\label{haar1}
\end{minipage}

\end{figure}

\begin{figure}[ht]
\begin{minipage}{0.45\linewidth}
\begin{tikzpicture}[scale = 0.8]
\draw[thick, black, -latex] 
(0, 5) -- ($(5.3, 5)$);
\draw [thick, black, -latex] 
(0, 2) -- ($(0, 9)$);
\node at (5.3, 5) [below]{$x$};
\node at (0, 9) [left]{$y$};
\draw[ultra thick, red] 
(0, 7.5) -- ($(1.25, 7.5)$);
\draw[dashed, thick, red] 
(0, 7.5) -- ($(0, 5)$);
\draw[dashed, thick, red] 
(1.25, 5) -- ($(1.25, 7.5)$);

\draw[ultra thick, red] 
(2.5, 2.5) -- ($(1.25, 2.5)$);
\draw[dashed, thick, red] 
(2.5, 2.5) -- ($(2.5, 5)$);
\draw[dashed, thick, red] 
(1.25, 5) -- ($(1.25, 2.5)$);

\node at (0, 5) [below right] {$0$};

\node at (1.25, 5) [below right] {$\frac{1}{4}$};

\node at (2.5, 5) [below right] {$\frac{1}{2}$};

\node at (5, 5) [below] {$1$};

\node at (0, 7.5) [left] {$\sqrt{ 2}$};

\node at (0, 2.5) [left] {$-\sqrt{2}$};

\node at (4, 8) {\large $h_2$};
\end{tikzpicture}
\caption{The function $h_2$ that is in the first level of Haar system.}
\label{haar2}
\end{minipage}
\quad \quad
\begin{minipage}{0.45\linewidth}
\begin{tikzpicture}[scale = 0.8]
\draw[thick, black, -latex] 
(0, 5) -- ($(5.5, 5)$);
\draw [thick, black, -latex] 
(0, 2) -- ($(0, 9)$);
\node at (5.5, 5) [below]{$x$};
\node at (0, 9) [left]{$y$};
\draw[ultra thick, red] 
(2.5, 7.5) -- ($(3.75, 7.5)$);
\draw[dashed, thick, red] 
(2.5, 7.5) -- ($(2.5, 5)$);
\draw[dashed, thick, red] 
(3.75, 5) -- ($(3.75, 7.5)$);

\draw[ultra thick, red] 
(5, 2.5) -- ($(3.75, 2.5)$);
\draw[dashed, thick, red] 
(5, 2.5) -- ($(5, 5)$);
\draw[dashed, thick, red] 
(3.75, 5) -- ($(3.75, 2.5)$);

\node at (0, 5) [below right] {$0$};

\node at (5, 5) [below right] {$1$};

\node at (3.75, 5) [below right] {$\frac{3}{4}$};

\node at (2.5, 5) [below right] {$\frac{1}{2}$};

\node at (0, 7.5) [left] {$\sqrt{2}$};

\node at (0, 2.5) [left] {$-\sqrt{2}$};

\node at (4.5, 8) {\large $h_3$};
\end{tikzpicture}
\caption{The function $h_3$ that is in the first level of Haar system.}
\label{haar3}
\end{minipage}
\end{figure}

The basis in $L_2(\R)$ slightly differs from the basis in $L_2[0, 1]$: the function $h_0(x)$ is not included, the shifts in each level are allowed to be along the whole line, and the binary expansions of functions are also added. In other words, we consider the function
$$\psi (t) = \chi_{[0, \frac{1}{2}]} - \chi_{[\frac{1}{2}, 1]},$$
and the basis in $L_2(\R)$ is generated by it  
$$\psi_{j, k} = 2^{j/2} \psi (2^j t - k), \,\, j, k \in \Z.$$

Similarly one defines the multivariate Haar system. We fix a tile $G$ defined by an integer expanding matrix $M$ and by a system of digits $D(M) = \{d_0, d_1, \ldots, d_{m - 1}\}$. Binary contraction is replaced by the multiplication by powers of the matrix $M$. The function $h_0$ is replaced by the indicator of the tile $G$. 

We choose arbitrary $m - 1$ orthonormal vectors $e_1, \ldots, e_{m-1}$ in the space 
$W = \left\{x \in \R^m  \Bigm| \sum_{i = 0}^{m}{x_i} = 0\right\}$, and we denote by $(e_i)_{k}$ the $k$-th coordinate of the vector $e_i$. Firstly, the auxiliary set of Haar wavelet functions is constructed
$$\psi_s(x) = \sqrt{m}\sum \limits_{k = 0}^{m - 1}{(e_s)_{k + 1} \cdot \chi_{M^{-1}(G + d_k)}} \quad \forall {s = 1, \ldots, m - 1}.$$
Then 
$$\left\{m^{j/2}\psi_s(M^j x - k)\right\}_{j \in \Z, \,k \in \Z^d, \,s = 1, \ldots, m-1} $$
is the basis in $L_2(\R^d)$ called \textit{Haar basis}.
Thus, every tile generates a Haar system. 

Orthonormal Haar bases are widely applied in approximation theory, numerical methods, signal processing, etc. (see, for example, \cite{NovProtSkop}, \cite{NovSem}).

\begin{definition}
We call a multivariate Haar system \textit{box-Haar system} if it is generated by a box-tile. 
\end{definition}
As we can see from the construction, the less is the number of digits $m$, the simpler is the corresponding Haar basis. One of the advantages of using box-Haar is the reduction of the number of digits and, as a consequence, of the wavelet functions. This eventually allows us to achieve the required accuracy using less coefficients of expansion.  

For example, there exists the integer matrix $\begin{pmatrix} 0 & 2 \\ 1 & 0 \\\end{pmatrix}$
 with $|\det{M}| = 2$ that generates a box-tile. However, if we restrict ourselves only to tensor products of one-dimensional Haar systems (the simplest systems generated by diagonal matrices), it is impossible to obtain a tile with two digits. In this case one needs at least four digits, and, respectively, three generating wavelet functions (in case of two digits the system is generated by one wavelet-function). Indeed, the matrix $M$ is expanding, so, if it is diagonal, then all diagonal elements must be at least two in absolute value. Therefore, $|\det M| \ge 2\cdot 2 = 4$. Using box-Haars we can guarantee the existence of Haar system with two digits in arbitrary dimension because we can define the corresponding matrix by Theorem \ref{mainbox}. 

Another advantage of using box-Haars is their smoothness. This parameter is responsible for the rate of approximation by the Haar system. We define the H\"older exponent of regularity of function as follows 
$$\alpha_\varphi = \sup\left\{\alpha \geqslant 0 \mid \exists c\colon\; \|\varphi(\cdot) - \varphi(\cdot + h)\|_2 \leqslant c\cdot h^\alpha \right\}.$$
It is known that in $L_2$ the H\"older regularity is equal to Sobolev regularity $$s_\varphi = \sup\left\{s > 0 \mid \int |\hat{\varphi}|^2 (|\xi|^2 + 1)^s d\xi < \infty)\right\},$$ where $\hat{\varphi}(\omega) = \int_{-\infty}^{\infty} \varphi(x) e^{-ix\omega}dx$.

The regularity of Haar system is defined as the regularity of the indicator function of the corresponding tile $G$. It is known that for indicator functions the value of regularity does not exceed $0.5$. So in the case when the tile $G$ is ``simple'': parallelepiped, polyhedra or their union, the regularity of the corresponding Haar system (which is equal to $0.5$ in these cases) is maximal. Thus, the rate of appro\-xi\-ma\-tion by means of box-Haar is the best possible.

\section{The classification of plane polygonal attractors}
\label{plane}
In this section we investigate the case $d = 2$, but move from box-attractors to arbitrary polygonal attractors on the plane. As usual, we assume that a polygon is a plane figure bounded by a closed simple broken line. All polygons are connected but not always convex. Non-convex polygonal attractors are specially difficult to analyse. We show that there are no other polygonal attractors but parallelograms. After that we come back to the case of arbitrary dimension but restrict ourselves to the convex case. 

\begin{theorem}\label{mainnonconvex}
If a plane polygon $G \subset \R^2$ is an attractor, then it is a parallelogram. 
\end{theorem}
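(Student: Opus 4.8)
The plan is to exploit that, by Definition \ref{def2}, the polygon $G$ is subdivided (with pairwise measure-zero overlaps) into the $m=|\det M|$ pieces $M^{-1}(G+s)$, each of which is the single polygon $P:=M^{-1}G=AG$ (where $A:=M^{-1}$ is a linear contraction) translated by $M^{-1}s$. Thus $G$ is tiled by $m$ \emph{translates} of one affine copy $P$ of itself; no rotations or reflections of the pieces are allowed, and this rigidity is what I would lean on throughout. Two structural facts drive the argument: a constraint on edge directions, and a local angle-fitting lemma at the sharpest vertex.

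First I would prove the \emph{direction lemma}: every edge of $\partial G$ is covered, from inside, by boundary edges of adjacent tiles, so each edge direction of $G$ is an edge direction of $P=AG$; since $A$ is a bijection sending the edge-direction lines of $G$ to those of $P$ and the two sets have the same finite cardinality, $A$ must permute the set $\mathcal D$ of edge-direction lines of $G$. Analysing the induced action of $A$ on the projective line then gives a clean dichotomy. If $A$ is a scalar multiple of an orthogonal matrix (a similarity or an anti-similarity), it preserves angles up to sign, so $P$ has exactly the same interior angles as $G$. Otherwise $A$ has two distinct real eigenvalues of different modulus (complex eigenvalues would make $A$ conformal, and $\mu_1=-\mu_2$ would make $A$ a scaled reflection), and its action on $\mathbb{P}^1$ is hyperbolic with only the two eigen-lines periodic; since $\mathcal D$ is a finite invariant set, it consists of at most these two directions, so $G$ has only two edge directions.

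Next I would establish the \emph{wedge-fitting lemma}: at the vertex $v$ realizing the globally smallest interior angle $\alpha_{\min}<\pi$, the cone of $G$ at $v$ is a wedge of angle $<\pi$, so no tile can meet $v$ in the relative interior of one of its edges (that would force a half-plane of angle $\pi$ to sit inside a wedge of angle $<\pi$) and none can contain $v$ in its interior. Hence $v$ is covered solely by tile corners, and $\alpha_{\min}$ equals a sum of interior angles of $P$. Combined with the dichotomy this finishes the \emph{convex} case: if $G$ has only two edge directions and is convex it is a parallelogram; if instead $A$ preserves angles, then $\alpha_{\min}(P)=\alpha_{\min}(G)$ forces exactly one tile corner---the sharpest---to sit snugly at $v$, and propagating this snug matching along the two edges issuing from $v$ rigidifies the tiling into a lattice of translates whose fundamental domain $G$ must be a parallelogram.

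The \textbf{main obstacle} is reducing the general (possibly non-convex) polygon to the convex case, i.e.\ excluding reflex vertices---exactly the configurations (such as an L-shaped hexagon, which has only two edge directions) that the direction lemma alone cannot rule out. The strategy I would pursue is to apply the wedge-fitting rigidity not only at the sharpest convex vertex but to propagate the forced snug placement of tiles along $\partial G$, and to examine the sharpest reflex notch, i.e.\ the reflex vertex whose exterior wedge $2\pi-\gamma<\pi$ is thinnest: the two tiles flush against its two boundary edges, being translates of a single $P$, cannot consistently fill a concave turn, which I expect to yield a contradiction. Making this propagation argument fully rigorous for arbitrary non-convex boundaries is the delicate part; once no reflex vertex survives, the convex analysis above completes the proof that $G$ is a parallelogram.
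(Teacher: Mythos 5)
Your reduction to translates of the single polygon $P=M^{-1}G$ is correct and matches the paper's starting point, but your direction lemma is wrong as stated. The claim that complex eigenvalues make $A$ conformal is false: a real matrix with non-real spectrum is only \emph{linearly conjugate} to a rotation--scaling, not equal to one. The paper's own example $M=\begin{pmatrix}0&-2\\1&0\end{pmatrix}$, which genuinely generates an attractor (the rectangle of Fig.~\ref{rect2dim}), has $M^{-1}$ with eigenvalues $\pm i/\sqrt2$ and yet distorts angles. Likewise, real eigenvalues $\mu,-\mu$ give a scaled reflection only when the eigenvectors happen to be orthogonal, and since the projectivization of $\operatorname{diag}(1,-1)$ is an involution, \emph{every} direction is periodic in that case, so your conclusion ``at most two edge directions'' does not follow; the parabolic (Jordan block) case is not addressed at all. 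The correct, repairable statement is the one the paper proves as Lemma~\ref{lempro}: because the finitely many edge-direction classes are permuted, one passes to a suitable even power $M^{-N}$ fixing them all, and then either $G$ has at least three direction classes, whence the induced projective map fixes three points of the projective line and $M^{-N}$ is a homothety with positive ratio, or $G$ has only two edge directions and $M^{-N}$ is positively diagonal in adapted coordinates. Your subsequent angle-comparison between $P$ and $G$ must therefore be run with a power of $M$ (the paper shows $G$ is also an attractor for $M^k$ with an enlarged shift set), not with $A$ itself.

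The decisive gap, however, is the non-convex case, which you explicitly leave unresolved (``which I expect to yield a contradiction''). Your proposed local analysis at the sharpest reflex notch cannot succeed as a local argument: there the interior angle exceeds $\pi$, so tiles may meet the reflex vertex along edge interiors, and a translate of $P$ can even place \emph{its own} reflex corner snugly in the notch (picture a half-size L-hexagon seated in the reflex corner of the big L) --- no local contradiction arises, and the true obstruction is global. The paper resolves this by an entirely different mechanism: it first proves that a polygonal attractor always has an \emph{extreme} side (a side whose supporting line misses the interior), shows that an extreme side is covered only by translates of $M_1^{-1}l$ with no conjugate sides, deduces that each neighbour of an extreme side is again extreme, and chains around the boundary to conclude that \emph{all} sides are extreme, hence $G$ is convex; the collinearity argument of Fig.~\ref{nc_3} then shows every second side is parallel, which with convexity forces a parallelogram. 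Finally, your convex endgame is also only asserted: even granting that the snug corner at the sharpest vertex propagates to a lattice of translates, a lattice tiling does not by itself force a parallelogram fundamental domain (hexagons tile the plane by a lattice of translates), so an argument like the paper's parallel-neighbour step is still required. In sum, the wedge-fitting observation at the sharpest convex vertex is sound, but both the reduction to convexity and the convex conclusion are missing, and the direction dichotomy needs the passage to a power to be correct.
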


Now we introduce the following concept. 

\begin{definition}
We say that a side of the polygon is \textit{extreme}, if the line that contains this side does not intersect the interior of the polygon.  
All sides located on the same line with a given extreme side are called \textit{conjugate}. 
\end{definition}

\begin{proof}
It can be assumed that all angles of the polygon are different from $180^{\circ}$. Let the dilation operator be $M^{-1}$. All sides of the polygon $G$ can be divided into equivalence classes: the sides are equivalent if they are parallel. Consider also such equivalence classes of the polygon $M^{-1}G$. 

The affine transform preserves parallel lines, therefore the numbers of classes of $G$ and of $M^{-1}G$ are equal. Since the polygon $G$ is divided into parallel copies of $M^{-1}G$, each side of $G$ is adjacent to some side of one of the copies $M^{-1}G$. 
Hence, each class of the polygon $G$ has at least one corresponding class of $M^{-1}G$ which is parallel to it. 
Besides, there is the only such class since different classes of $M^{-1}G$ are not parallel. The numbers of classes of $G$ and of $M^{-1}G$ are equal. Let us identify the parallel classes of $G$ and of $M^{-1}G$, then we obtain a permutation $\sigma$ of the classes of a polygon $G$. 
A certain degree of a permutation $\sigma$, say $\sigma^{N_1}$, is equal to the identity. Then $M^{-N_1}$ takes each side of the polygon $G$ to a parallel side of the polygon $M^{-N_1}G$. 

Since $M^{-1}$ is a dilating operator, there exists $N_2$ such that, for every $n > N_2$, the diameter of the set $M^{-n}G$ is at least twice less than the length of an arbitrary side in $G$. In particular, this implies that, for every $n > N_2$, each side of the polygon $G$ is divided by the dilated polygons (copies of $M^{-n}G$) into at least two segments. 

Denote by $N = 2 N_1 \cdot N_2$. Then under the action of $M^{-N}$ on $G$ the image of each side of $G$ is parallel to it, is divided by the contracted polygons into at least two parts. The diameter of the dilated set is at least twice less than the length of an arbitrary side in $G$. The evenness of $N$ will be used later. Let $M_1 = M^{N}$.

Let us prove the following lemma. 
\begin{lemma} \label{lempro}
If a polygon $G$ is an attractor, then at least one of the following sta\-te\-ments holds.  

1) The transform $M_1^{-1} = M^{-N}$ is a homothety with a positive coefficient. 

2) All sides of a polygon $G$ are parallel to two lines. In the coordinate system with axes parallel to these lines, $M_1^{-1}$ is a diagonal matrix with a strictly positive diagonal. 
\end{lemma}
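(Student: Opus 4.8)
The plan is to study the linear map $A = M_1^{-1} = M^{-N}$ entirely through the directions of the sides of $G$, exploiting the property established just before the lemma, namely that $A$ carries every side of $G$ to a side of $AG$ parallel to it. Since parallelism depends only on the linear part of an affine map, this says precisely that each direction $v$ spanned by a side of $G$ satisfies $Av = \mu\, v$ for some nonzero real scalar $\mu$; in other words, every side direction of $G$ is a real eigendirection of $A$. The whole proof then reduces to elementary $2\times 2$ linear algebra applied to the set of these eigendirections, together with a positivity argument.

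First I would count the number $r$ of distinct directions among the sides of $G$. A simple closed polygon all of whose angles differ from $180^{\circ}$ cannot have all its sides parallel to a single line, so $r \geq 2$, and I would split into the cases $r \geq 3$ and $r = 2$, which correspond to alternatives 1) and 2) of the lemma. If $r \geq 3$, then $A$ is a $2\times 2$ matrix with at least three pairwise non-parallel eigendirections; but two eigendirections with distinct eigenvalues already account for all eigendirections of such a matrix, so the existence of a third distinct eigendirection forces a single eigenvalue and hence $A = \lambda I$, a homothety, giving alternative 1). If instead $r = 2$, the two side directions form a basis of $\R^2$ that $A$ preserves, so in the coordinate system with axes along these directions $A$ is diagonal, giving the diagonal form of alternative 2).

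The genuinely essential point, and the one I expect to require the most care, is positivity, and this is exactly where the evenness of $N$ is used. I would write $A = M^{-N} = \bigl(M^{-N_1}\bigr)^{2N_2}$, recalling that $N = 2N_1 N_2$ and that $\sigma^{N_1} = \mathrm{id}$, so that $M^{-N_1}$ already preserves each side direction and therefore acts on it as multiplication by a real nonzero scalar $\mu$. Consequently $A$ acts on that same direction by $\mu^{2N_2} > 0$. Thus in the case $r \geq 3$ the common eigenvalue $\lambda$ equals such an even power and is positive, so $M_1^{-1}$ is a homothety with positive coefficient; and in the case $r = 2$ both diagonal entries, being even powers of nonzero real numbers, are strictly positive. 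The direction-counting dichotomy is routine; the obstacle is to guarantee that the relevant eigenvalues are positive rather than merely nonzero, and the main work is in justifying that $M^{-N_1}$ genuinely acts as a real scalar along each side direction, so that raising to the even power $2N_2$ is legitimate and yields the required positivity.
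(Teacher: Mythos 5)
Your proof is correct and follows essentially the same route as the paper: your linear-algebra observation that a $2\times 2$ matrix with three pairwise non-parallel eigendirections must be scalar is exactly the paper's projective argument (a projective transform of the line of directions with three fixed points is the identity), and the two-direction case yields diagonality in the side-direction basis in both treatments. The only cosmetic difference is how positivity is extracted from the evenness of $N = 2N_1N_2$: the paper squares $M^{-N/2}$, while you raise $M^{-N_1}$ to the even power $2N_2$.
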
 

\begin{proof}
The set of all lines passing through the origin on the plane is a projective line. The matrix $M^{-N / 2}$ defines a projective transform of this line. Besides that, the definition of the number $N$ implies that the operator $M^{-N / 2}$ takes each side of the polygon $G$ to the parallel side of the polygon $M^{-N / 2}G$. If the polygon $G$ has at least three classes of parallel lines, then this projective transform has at least three fixed points. Thus, it is the identity. Consequently, each line on the plane preserves its direction under the transform $M^{-N / 2}$. Thus, $M^{-N / 2}$ is a homothety. Then $M^{-N}$ is a homothety with a positive coefficient, and 1) holds. 

Now consider the second case, when a polygon $G$ has at most two classes of parallel sides. Since the matrix 
$M^{-N / 2}$ takes each side of the polygon $G$ to a parallel side of the polygon $M^{-N / 2}G$, and the sides are parallel to the coordinate axes, the matrix $M^{-N / 2}$ is diagonal. Then $M^{-N}$ is a diagonal matrix with the positive elements on the diagonal. 

Lemma \ref{lempro} is proved. 
\end{proof}

Now continue the proof of Theorem \ref{mainnonconvex}.  
Let us prove that a polygonal attractor has at least one extreme side (notice that it is not necessary for an arbitrary polygon). 

Fix an arbitrary side $s$ of the polygon $G$. It is covered with the non-overlapping sides of small polygons that are adjacent to it. At least one of these segments of nonzero length contains the midpoint of the side $s$. Let it be the side $h$ of the small polygon $H$. The chosen polygon $H$ can not intersect the line that contains $s$ in the points outside $s$ since its diameter is less than $|s| / 2$. Therefore, the whole polygon $H$ lies on the one side of the line that contains $s$, and its side $h$ is extreme. Since affine transform preserves the extreme property of sides, the initial polygon $G$ also has at least one extreme side.

Fix an arbitrary extreme side $l$ of the polygon $G$. Let us introduce the coordinate system with the abscissa along the side $l$ and assume that the whole polygon lies in the upper half-plane. Everywhere below we denote the terms on the left, on the right, up, down in a natural way, according to this coordinate system. Denote the neighbouring sides of the side $l$ of the polygon $G$ by $m_1$ (on the left) and $m_2$ (on the right). 

Only extreme sides of the dilated polygons are adjacent to the extreme side $l$ of the polygon $G$ because each of the contracted polygons is a subset of $G$, that is, a subset of the upper half-plane. Thus, only extreme sides of the dilated polygons which correspond to the extreme sides of the polygon $G$ from the equivalence class of the side $l$ can be adjacent to $l$. Lemma \ref{lempro} implies that it can be only the shifts of the side $M_1^{-1}l$ and its conjugates. Indeed, Lemma \ref{lempro} implies that the transform is a contraction with positive coefficients and does not change the relative position of the sides and their directions. 

Observe that the side $l$ (and similarly an arbitrary extreme side) has no conjugate sides. Assume the converse, then the side $M_1^{-1}l$ has some conjugate sides. Let $T$ be the set of shifts of $M_1^{-1}G$ which tile the polygon $G$. Since the side $l$ is extreme, the polygons from $T$ intersecting $l$ are the shifts of each other along the abscissa. All of them intersect $l$ by several segments that are shifts of the side $M_1^{-1}l$ and of its conjugate sides. Consider one of these polygons from $T$, let it be $F$. The polygon $F$ has at least two segments on the line $l$, and the free space on $l$ between them shoud be filled. Let $H$ be a polygon from $T$ that contains some part of this free space. Then $H$ necessarily intersects $F$, which is a contradiction (see Fig. \ref{nc_2}). 

\begin{figure}[ht]
\begin{minipage}[h]{0.5\linewidth}
\centering
\begin{tikzpicture}[scale=0.9, every node/.style={scale=0.9}]
	\coordinate (X0) at (-2.5cm, -1cm);
	\coordinate (X1) at (-2cm, -1cm);
	\coordinate (X2) at (-1cm, 1cm);
	\coordinate (X3) at (1cm, 0cm);
	\coordinate (X4) at (1.5cm, -1cm);
	\coordinate (X5) at (1cm, -1cm);
	\coordinate (X6) at (0cm, -0.5cm);
	\coordinate (X7) at (-0.5cm, -0.5cm);
	\coordinate (X8) at (-1cm, -1cm);
	\coordinate (X9) at (3.5cm, -1cm);
	\foreach \i in {1,2,...,8}
	{
	  \coordinate (Y\i) at ($(X\i) + (1.4, 0)$);
	}
	\draw[ultra thick, black] (X0)--(X9);
	\draw[ultra thick, blue] (X1)--(X2)--(X3)--(X4)--(X5)--(X6)--(X7)--(X8)--(X1);
	\draw[ultra thick, blue] (X5)--(X4);
	\draw[ultra thick, red] (Y1)--(Y2)--(Y3)--(Y4)--(Y5)--(Y6)--(Y7)--(Y8)--(Y1);
	\draw[ultra thick, red] (Y4)--(Y5);
	\foreach \i in {1,2,...,8}
	{
		\node[draw,circle, blue, inner sep=1pt,fill] at (X\i) {};
		\node[draw,circle, red, inner sep=1pt,fill] at (Y\i) {};
	}
	\node at (barycentric cs:X0=1,X9=1) [below] {$l$};
\end{tikzpicture}
\caption{The proof that the intersection of one polygon with the boundary consists of one segment.}
\label{nc_2}
\end{minipage}
\hfill
\quad
\begin{minipage}[h]{0.5\linewidth}
\centering
\begin{tikzpicture}
	\coordinate (X1) at (-2cm, -1cm);
	\coordinate (X2) at (-2.5cm, 0cm);
	\coordinate (X3) at (-2cm, 1cm);
	\coordinate (X4) at (-1.5cm, 0cm);
	\coordinate (X5) at (-1cm, -1cm);
	\coordinate (X6) at (0cm, -1cm);
	\coordinate (X7) at (-1cm, 1cm);
	\coordinate (X8) at (2cm, -1cm);

	\coordinate (X9) at (-0.5cm, 0cm);
	\coordinate (X10) at (0cm, 1cm);
	\coordinate (X11) at (1cm, -1cm);
	
	\coordinate (X12) at (-3.5cm, 2cm);
	\draw[ultra thick, black] (X1)--(X2)--(X3)--(X5)--(X6)--(X7)--(X4);
	\draw[ultra thick, black] (X1)--(X8);
	\draw[ultra thick, black] (X9)--(X10)--(X11);
	\draw[thick, black] (X1)--(X12);
	
	\draw[thick, red] ($(X1)!.06!(X5)$)--($(X2)!.06!(X4)$);
	\draw[thick, red] ($(X4)!.06!(X9)$)--($(X5)!.06!(X6)$);
	\draw[thick, red] ($(X9) + (0.06, 0)$)--($(X6) + (0.06, 0)$);
	\node at (barycentric cs:X1=1,X4=1) {$H_1$};
	\node at (barycentric cs:X1=1,X12=1)[left] {$m_1$};
	\node at (barycentric cs:X5=1,X9=1) {$H_2$};
	\node at (barycentric cs:X1=1,X2=1)[red, left] {$l_1$};
	\node at (X6)[below] {$l$};
	\foreach \x in {-2,-1,...,2}
	{
		\node[draw,circle,inner sep=1pt,fill] at (\x, -1) {};
	}
	\end{tikzpicture}
\caption{The proof that the neighbours of an extreme side of the polygonal attractor are parallel.}
\label{nc_3}
\end{minipage}

\end{figure}

Besides, if a side of a small polygon is adjacent to $l$, then it is a subset of $l$. Otherwise the small polygon intersects either $m_1$ or $m_2$ (they lie as well as the whole polygon in the upper half-plane).

Hence, the side $l$ is divided into equal segments by the polygons from $T$ (as we know, to at least two segments). Consider the leftmost of these segments (in the direction of the abscissa). Let it be a side of the polygon $H_1$. The polygon $H_1$ has the side $l_1$ which corresponds to the left side $m_1$ of the initial polygon $G$. Lemma \ref{lempro} implies that $l_1$ lies on the line which contains $m_1$. Since the diameter of $H_1$ is less than the sides of a polygon $G$, the side $l_1$ is a subset of $m_1$. 

Let us show that $m_1$ is an extreme side. Suppose the converse, that the line which contains $m_1$ intersects the interior of the polygon $G$. Then the line which contains $l_1$ intersects the interior of the polygon $H_1$. However, since the diameter of $H_1$ is less than the length of the side $m_1$, the side $m_1$  intersect the interior of a polygon $H_1$, which is impossible. Thus, $m_1$ is an extreme side in the polygon $G$. 

Then the left neighbouring side of $m_1$ is also extreme and so on. Following the chain we prove that all sides of a polygon $G$ are extreme. This implies that the polygon is convex.

Now prove that the sides $m_1$ and $m_2$ are parallel. Similarly, the neighbours of an arbitrary extreme side are parallel, i.e. 
every second side of the polygon is parallel. Then because of the convexity the polygon is necessarily a parallelogram. As we found out before, the side $l$ is divided into the shifts of the side $M_1^{-1}l$. The first segment of a partition corresponds to the polygon $H_1$. Suppose that the second segment of a partition on $l$ corresponds to the polygon $H_2$ which is a shifted copy of the polygon $H_1$. Then it has a side equal to the side $l_1$. One of the sides of $H_1$ also lies on the same line with this side of $H_2$ (see Fig. \ref{nc_3}). Otherwise there will be an angle that is impossible to fill with finitely many polygons with a nonzero edge along the abscissa. We obtain that the two neighbouring sides of the segment on $l$ in $H_1$ are parallel. The images of these sides under an affine transform are $m_1$ and $m_2$, therefore $m_1$ is parallel to $m_2$. 

Theorem \ref{mainnonconvex} is proved.   
\end{proof}
Thus, there is no convex polygonal attractors on the plane but parallelograms. 

\section{The classification of the convex attractors}
\label{convexcase}
Thus, we found all polygonal attractors on the plane. Now we come back to the arbitrary dimension but consider only the convex case. A general case including nonconvex polyhedra remains an open problem, see also Section \ref{hyp}. 

\begin{remark}
It is interesting to consider the questions related to the structure of an arbitrary self-affine convex bodies (i.e. having a partition into their affine copies with non-overlapping interiors, $X = \bigcup_{i = 1, \ldots, m}A_i X$, where $A_i$ is an arbitrary affine operator). In the work of C.~Richter \cite{Richter} it was shown that on the plane such sets are always polygons. In the work of A.S.~Voynov \cite{Voyn} these sets were classified in $\R^3$ and G.Valett's conjecture in the dimension $d \ge 3$ was disproved: \textit{Every self-affine body is either a polyhedron or affinely equivalent to a direct product of a self-affine polyhedron and of some convex body of a smaller dimension.} 
\end{remark}

All convex attractors are polyhedra. This statement is well-known. Even stronger statements hold, for instance, Theorem $1$ from \cite{Voyn}: 

\textit{Let $A_1, \ldots, A_m$ be the dilating family of affine operators, i.e. the norm of an arbirary product $A_{t_1}\ldots A_{t_k}$ tends to $0$ as $k \to \infty$ where $t_i \in \{1, \ldots, m\}$. If a convex body $G \subset \R^d$ has a self-affine property, i.e. is a union of its images $A_1G, \ldots, A_mG$ and the intersection of different images have zero measure, then $G$ is a polyhedron. } 

Let $A_i x = M^{-1}(x + s_i)$, then we obtain that every convex attractor is a polyhedron. 

Now prove that the convex attractor that is a polyhedron is always a parallelepiped. Let us remark that it is not true for self-affine convex bodies from \cite{Voyn}.

\begin{theorem}\label{mainconvex}
Every convex attractor $G$ in $\R^d$ is a parallelepiped.
\end{theorem}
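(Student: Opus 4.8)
The plan is to combine the already-cited fact that a convex attractor is a polytope with an analysis of how the linear map acts on the facet normals, mirroring the two-dimensional argument of Lemma~\ref{lempro}. First I would record that, since $G$ is convex, Theorem~$1$ of \cite{Voyn} (quoted above) guarantees that $G$ is a convex polyhedron; by the attractor property it is tiled with non-overlapping interiors by the $m=|\det M|$ copies $M^{-1}(G+s)$, $s\in S(M)$, each an affine image of $G$ with linear part $M^{-1}$.

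Next I would study the finite set $\mathcal N=\{n_1,\dots,n_k\}$ of outer facet normal directions of $G$. A direct computation (writing a facet $\{x:\langle n,x\rangle=c\}$ and applying $x\mapsto M^{-1}(x+s)$) shows that the facet normals of a copy $M^{-1}(G+s)$ are the directions $M^{T}n_i$. Since each facet of $G$ lies on $\partial G$ and must be covered, up to a set of zero $(d-1)$-measure, by facets of adjacent copies lying in the same supporting hyperplane, every $n\in\mathcal N$ equals $M^{T}n_i$ up to direction for some $i$; as $M^{T}$ is invertible and $\mathcal N$ is finite, $M^{T}$ permutes $\mathcal N$ projectively. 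Exactly as in Lemma~\ref{lempro}, passing to a suitable power $M_1=M^{N}$ (and squaring to fix orientations) I may assume that $M_1^{T}$ fixes every direction in $\mathcal N$ with a positive factor, i.e.\ every facet normal is an eigenvector of $M_1^{T}$ with eigenvalue $\lambda_i>1$. Because $G$ is bounded, its normals positively span $\R^{d}$, so among them there are $d$ linearly independent ones; hence $M_1^{T}$, and therefore $M_1$, is diagonalizable.

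Working in an eigenbasis I would write $M_1=\mathrm{diag}(\mu_1,\dots,\mu_d)$, the eigenvectors of $M_1^{T}$ being the dual basis vectors. If the eigenvalues $\mu_j$ are pairwise distinct, then each facet normal, being an eigenvector of $M_1^{T}$, is proportional to one of these $d$ dual basis vectors; boundedness forces both orientations $\pm$ of each to occur, so $G$ has exactly $2d$ facets forming $d$ parallel pairs, which means $G$ is a parallelepiped. This settles the generic case and, in particular, recovers Theorem~\ref{mainnonconvex} for convex polygons.

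The main obstacle is the degenerate case of repeated eigenvalues, where on an eigenspace of dimension $\ge 2$ the map $M_1^{T}$ acts as a scalar and the facet normals lying in it are a priori unconstrained in direction; here the eigenbasis argument collapses to the statement that a convex polytope which tiles itself by positively homothetic copies must be a parallelepiped. I would handle this by dimension reduction: project $G$ along an $M_1$-invariant coordinate line onto the complementary invariant hyperplane, observe that the images of the pieces satisfy the self-affine relation for the restricted diagonal map, and invoke the lower-dimensional classification (ultimately Theorem~\ref{mainnonconvex} in the plane) together with the box description of Theorems~\ref{mainbox}--\ref{tbox2} to force the scalar blocks to be boxes as well. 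The delicate point is that projection may create overlaps among the pieces, so this last step must instead be run locally, on the corner cones of $G$ at a vertex, where the tiling of the cone by a single homothetic copy pins down the facet structure and yields the parallelepiped.
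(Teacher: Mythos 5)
Your opening moves are sound and even elegant: quoting \cite{Voyn} to get polyhedrality, observing that $M^{T}$ permutes the facet-normal directions, passing to a power $M_1=M^{N}$ fixing each direction with a positive factor, and concluding that $M_1^{T}$ is diagonalizable because the normals of a bounded polytope contain $d$ linearly independent vectors. In the case of pairwise distinct eigenvalues your conclusion is correct: the normals are confined to $d$ lines, boundedness forces both orientations, and a bounded intersection of $d$ slabs with independent normals is a parallelepiped. But this spectral route is genuinely different from the paper's, and the difference is exactly where your proof breaks: the ``degenerate'' case you defer to the end is not a marginal case, it is the theorem. When $M_1$ has a repeated eigenvalue --- in the extreme, when $M_1$ is a homothety, which is precisely Case~1 of Lemma~\ref{lempro} in the plane --- your normal-direction analysis yields no constraint at all, and the statement you are left with (``a convex polytope tiled by translates of a single positive homothet of itself is a parallelepiped'') is essentially the full content of Theorem~\ref{mainconvex}. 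Note that even in $d=2$ this homothety case is the one that required the entire delicate extreme-side analysis of Theorem~\ref{mainnonconvex}; a triangle, for instance, is tiled by four half-size triangles, and only the requirement that all pieces be \emph{translates} of the same contracted copy rules it out --- a fact invisible to the spectrum of $M_1$.

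Your two proposed repairs for this case do not close it. Projection along an invariant line destroys the tiling (overlaps appear, as you concede), and the corner-cone fallback is unsubstantiated: at a vertex $v$ of $G$ the tangent cone $C_v$ is partitioned by the cones of the (possibly several) small copies meeting $v$, each of which is congruent to the cone of \emph{some} vertex of $G$; your assertion that ``a single homothetic copy'' tiles the corner cone is an unproved assumption, and even granting it, the resulting condition is satisfied automatically (a homothet placed at the corner reproduces the same cone) and pins down nothing about the global facet structure. The appeal to Theorems~\ref{mainbox}--\ref{tbox2} also cannot help, since those classify attractors already known to be parallelepipeds. The paper avoids the spectral split entirely by inducting on dimension: each $(d-1)$-face of $G$ is itself tiled by translated copies of its image under the contraction, hence is a lower-dimensional convex attractor and, by induction, a parallelepiped --- an argument indifferent to eigenvalue multiplicities. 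Crucially, the paper then observes (the rhombic dodecahedron remark) that ``all faces are parallelepipeds'' is \emph{not} sufficient, and proves the additional property that the two hyperfaces through a pair of opposite $(d-2)$-faces of any hyperface are parallel, via the adjacency argument with the copies $\bar P,\bar Q$ and the $\varepsilon$-segment construction; only this extra step forces $G=\mathrm{conv}(H\cup H_1)$ to be a parallelepiped. Any completion of your approach would need a substitute for both of these ingredients in the repeated-eigenvalue case.
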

\begin{proof}
We can assume that $G$ is a polyhedron. 
Let us prove the theorem by induction on the dimension of the space. For $d = 2$ we already know that the statement holds (Theorem \ref{mainnonconvex}). Suppose it holds for the dimension $d - 1$, then prove it for $d$. 

Fist we argue as in the two-dimensional case. Let $M^{-1}$ be the contraction operator. All hyperfaces of a polyhedron $G$ can be divided into the equivalence classes: the hyperplanes are equivalent if they are parallel. Consider also such classes of equivalence of the polyhedron $M^{-1}G$. The affine transform preserves parallel lines, therefore the  numbers of classes of $G$ and of $M^{-1}G$ are equal. Since the polyhedron $G$ is divided into the copies of $M^{-1}G$, each hyperplane of $G$ is adjacent to some hyperplane of one of the copies $M^{-1}G$.  
Hence, each class of the polyhedron $G$ has at least one corresponding class of $M^{-1}G$ parallel to it. 
Besides, there is the only such class since different classes of $M^{-1}G$ are not parallel. Since the numbers of classes of $G$ and of $M^{-1}G$ are equal, the correspondence is bijective and is given by a permutation $\sigma$ of the classes. 

Some degree of a permutation $\sigma$, let it be $\sigma^{N_1}$, is equal to the identity permutation. Let us remark that because of the convexity of the polyhedron there is at most two faces in each class, since a convex polyhedron has at most one face parallel to a given one.  
Let $N$ be a product $2 \cdot N_1$, then under the action of $M^{-N}$ on $G$ the image of each face of $G$ is associated to it. Since the matrix $M^{-1}$ is dilating, each side of $G$ is divided by the contracted polyhedra into at least two parts. Let $M_1 = M^{N}$. 

Fix an arbitrary face $S$ of the polygon, then it is divided by the shifts of a dilated polyhedron $M_1^{-1}G$ into several similar to it  polyhedra which are the shifted copies of each other. As we reduce the dimension by one, we can use the induction hypothesis and obtain that this face $S$ is a parallelepiped. We do it for each face and obtain that all faces of $G$ are parallelepipeds.

\begin{remark} The fact that all faces of $G$ are parallelepipeds does not imply that $G$ is a parallelepiped. The counterexample is a rhombic dodecahedron.  
\footnote{The author is grateful to the anonymous reviewer for pointing out this example}. 
\end{remark}

Observe that~$G$ additionally possess the following property:  for each $(d-1)$-face (that is, as we know, a parallelepiped) and 
for every pair of its opposite~$(d-2)$-faces, there exist two $(d-1)$-faces parallel to each other that contains them. Then we establish that this property together with the convexity implies that $G$ is a parallelepiped.

Fix an arbitrary $(d - 1)$-face~$H$ and choose arbitrary two of its opposite  $(d-2)$-faces $T_1$ and $T_2$. The polyhedron $G$ has two $(d-1)$-faces that contain $T_1$, $T_2$ and are different from~$H$. 
 Denote them by $K_1$, $K_2$ and prove that they are parallel. 

The face $H$ is adjacent to many $d$-polyhedra that are copies of $M_1^{-1}G$. Their $(d - 1)$-faces-parallelepipeds that lie in $H$ are shifted copies of each other. Consider a $d$-polyhedron $\bar P$ that is adjacent to $H$ by the $(d - 1)$-face $P$ and also has a $(d - 1)$-face that is adjacent to $K_1$, i.e. that is located leftmost 
(Fig. \ref{antirombo}). The parallelepiped $P$ has the $(d - 2)$-face $P_1$ lying in $K_1$ and the only parallel to it $(d - 2)$-face $P_2$. Consider the neighbour of the parallelepiped $H$, let it be $Q$, such that it is adjacent to the $(d - 2)$-face $P_2$ by its $(d - 2)$-face $Q_1$ (not necessarily completely). In the parallelepiped $Q$ the face $Q_1$ has the only parallel $(d - 2)$-face $Q_2$. 
The $d$-polyhedra $\bar P$ and $\bar Q$ corresponding to $P$ and $Q$ have $(d - 1)$-faces $\bar P_1$, $\bar P_2$, $\bar Q_1$, $\bar Q_2$ that contain $P_1$, $P_2$, $Q_1$, $Q_2$ ($\bar P_1$ is along $K_1$). The parallelepipeds $P$ and $Q$ are adjacent to each other by the faces $P_2$, $Q_1$. Let us prove that the polyhedra $\bar P$ and $\bar Q$ are also adjacent to each other by the faces $\bar P_2$, $\bar Q_1$, i.e. that there could not be an angle between the faces $\bar P_2$ and $\bar Q_1$. Suppose the converse. Take some one-dimensional edge $e$ of the parallelepiped $Q$ that is not parallel to its face $Q_1$. Since the face $H$ is filled with the copies of a parallelepiped $Q$, it is possible to draw a segment equal to $e$ through an arbitrary point of the face $H$  such that it lies entirely in one of the copies of $Q$. The polyhedra that are adjacent to $H$ fill the whole polyhedron $G$. Therefore, there is a number $\varepsilon > 0$ such that through an arbitrary point of the polygon $G$ with the distance to the face $H$ less than $\varepsilon$ it is possible to draw a segment equal to $0.5 e$ such that it lies entirely in one of the copies of $M^{-1}G$. 
Let us draw a hyperplane through the faces $P_2$ and $Q_1$ that is parallel to $K_1$. We can draw a segment equal to $0.5e$ through  an arbitrary point of this hyperplane with the distance to the intersection of the faces $P_2$ and $Q_1$ less than $\varepsilon$ so that this segment belongs to the only of tiling polyhedra. On the other hand, if we choose this point close enough to the intersection of the faces, then an arbitrary segment will intersect both polyhedra $\bar P$ and $\bar Q$ since it can not fit entirely in the fixed angle between the faces $\bar P_2$ and $\bar Q_1$, which is a contradiction. 

\begin{figure}[h!]
\centering
\includegraphics[width = 0.6\textwidth]{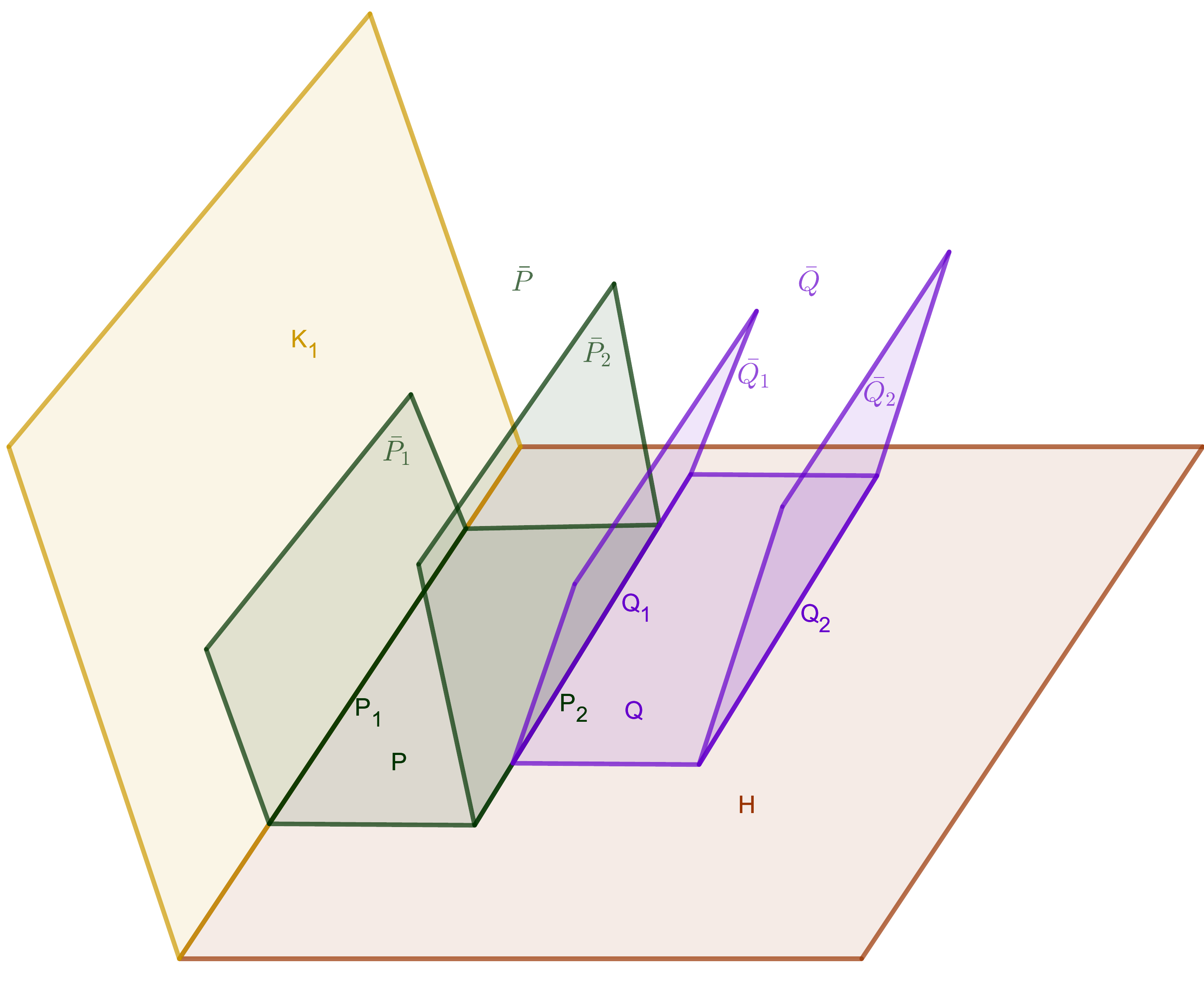}
\caption{The adjacent to each other dilated copies of the polyhedron $G$.}
\label{antirombo}
\end{figure}

So the faces $\bar P_2$ and $\bar Q_1$ are parallel. 
Since $P_1$ and $Q_2$ are such that $\bar P_1$ and $\bar Q_2$ can be obtained from each other with a shift (that takes $\bar P$ to $\bar Q$, $P$ to $Q$, $P_1$ to $Q_1$, and so on), all the $(d - 1)$-faces $\bar P_1$, $\bar P_2$, $\bar Q_1$, $\bar Q_2$ are parallel to each other. Thus, the $d$-polyhedron $\bar P$ has two parallel faces $\bar P_1$ and $\bar P_2$ which are both neighbours of the face-parallelepiped $P$. If we recall that $\bar P$ is the dilated initial polyhedron $G$, we conclude that $K_1$ is parallel to $K_2$.

Thus, we have shown that for each $(d-1)$-face of a polyhedron~$G$ and for each pair of its opposite~$(d-2)$-faces, there exist two $(d-1)$-faces parallel to each other that contain them. Take an arbitrary~$(d-1)$-face $H$. It has exactly 
$2(d-1)$ faces of the dimension~$(d-2)$. Denote them by $T_i,\, i = 1, \ldots , 2(d-1)$. 
Every face~$T_i$ is a subset of the only $(d-1)$-face different from $H$. 
Denote it by $Q_i$. Also denote by $V_i$ the face $Q_i$ of the dimension $(d-2)$ that is opposite to $T_i$. The face $V_i$ is a shifted face $T_i$. Denote the shift vector by $s_i$. These vectors are equal since the neighbouring $(d-1)$-faces $Q_i$, $Q_{i+1}$ have the common edges equal to both $s_i$ and $s_{i+1}$. 
Let us apply the established property for the $(d-1)$-face $Q_i$. Hence, there is two parallel $(d-1)$-faces that contain the pair of the opposite $(d-2)$-faces $T_i$ and  $V_i$.  One of them is $H$. Denote the other by $H_i$.  
Thus, the polyhedron~$G$ has $(d-1)$-faces $H_i, \, i = 1, \ldots , 2(d-1)$, parallel to the face~$H$. 
As $G$ is convex, it has at most one $(d-1)$-face parallel to $H$. Consequently, all $H_i$ coincide. Thus, the $(d-1)$-face $H_1$ that is parallel to $H$ contains all $(d-2)$-faces of $V_i, \, i = 1, \ldots , 2(d-1)$.  Since the $(d-2)$-faces $T_i$ form a parallelepiped, after the shift by the same vector taking them to $V_i$, they will again form a parallelepiped. Therefore, the face $H_1$ is a shift of the parallelepiped $H$. 
 
Every pairs of $(d-2)$-dimensional faces $(T_i, V_i), \, i = 1, \ldots ,2(d-1)$, where  $T_i \subset H, V_i \subset H_1$, is a pair of opposite faces of the hyperface~$Q_i$. Therefore, the following $2d$ parallelepipeds 
($(d-1)$-dimensional) are the hyperfaces of $G$: $H$ (the lower face), $H_1$ (the top face) and $Q_i, \, i = 1, \ldots , Q_{2(d-1)}$ (the lateral faces). Denote by $G'$ the convex hull of $H$ and $H_1$. It is a parallelepiped that is a subset of $G$, all of its $2d$ faces lie inside the faces of $G$. Hence, $G$ and $G'$ coincide, i.e. $G$ is indeed a parallelepiped. 
\end{proof}

\section{Intermediate conclusions}
So, convex attractors other than parallelepipeds do not exist. On the other hand, as we have shown in Section \ref{plane}, on the two-dimensional plane all possible polygonal attractors are parallelepipeds. We do not know if the same is true in higher dimensions, see the discussion in Section \ref{hyp}. Nevertheless, another natural question arises: do other ``simple'' attractors exist? It turns out that there are some but they are disconnected. We start the classification of disconnected attractors in the dimension one, that is, from the line. Even in this case the problem is nontrivial. In higher dimensions we give some examples of simple attractors and discuss the possibility of general classification in Section \ref{hyp}.

It is natural to define simple disconnected attractor as a union of a finite number of polyhedra. On the line it is the finite number of segments.

\section{The classification of attractors on the line}
\label{lineres}
In this section we classify one-dimensional attractors which consist of finitely many segments. They will be referred to as \textit{simple one-dimensional attractors}. The classification of two-dimensional attractors consisting of finitely many polygons remains an open problem, see also the conjecture in Section \ref{hyp}. 

The classification of simple one-dimensional attractors can be reduced to the problem of tiling the integer segment by the shifts of a single set (the classification of integer tiles). This problem was solved by K. Long (see \cite{Long}) in 1967 by applying of a strong combinatorial result of de Bruijn (see \cite{Bruij2}). We give this classification in a somewhat different formulation (clearer from our point of view) with the use of arithmetic progressions. We supply this result with a new autonomous proof. 

In the work \cite{BodRiv} the authors give a classification of integer tiles (apparently independent of Long), obtain the results about periods of tiling, and give an algorithm that calculates the minimal integer tiling interval. There is no proofs in that work. Besides that, the problems related to integer attractors were studied in the works of \cite{Tijdeman} -- \cite{Kol}.

\subsection{Reduction to the combinatorial problem}

\begin{lemma} \label{to_tiling}
A set $W$ consisting of finitely many segments with integer ends is an attractor if and only if some finite set of non-overlapping shifts of the set $W$ filles some segment. 
\end{lemma}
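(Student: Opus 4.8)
The plan is to prove the two implications separately, reducing the geometric notion of attractor (Definition~\ref{def2}) to the purely combinatorial statement about tiling a segment. Throughout I write $W = A + [0,1]$ for a finite set $A \subset \Z$ (the left endpoints of the unit segments making up $W$), so that $\mu(W) = |A|$.

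First I would treat the easier implication: if finitely many non-overlapping shifts of $W$ fill a segment, then $W$ is an attractor. After a translation we may assume these shifts $T$ fill $I = [0,N]$, i.e. $[0,N] = \bigsqcup_{t \in T}(W+t)$ up to measure zero, with $N = |T|\,|A|$; enlarging $I$ if needed we take $N \ge 2$. The key observation is that the scaled copy $NW$ is again tiled by shifts of $W$: since $NW = \bigcup_{a \in A}\bigl(Na + [0,N]\bigr)$ and the blocks $Na + [0,N]$ are non-overlapping (the $Na$ being distinct multiples of $N$), substituting the tiling of each $[0,N]$ gives
\[
NW = \bigsqcup_{a \in A,\, t \in T}\bigl(W + (Na + t)\bigr).
\]
Dividing by $N$ yields $W = \bigsqcup_{s \in S}N^{-1}(W+s)$ with $S = \{Na+t\}$ and $|S| = N$, so $W$ is an attractor for the expanding scalar $N$. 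This direction needs no extra hypotheses on $T$.

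For the converse I would start from $W = \bigsqcup_{s \in S}\lambda^{-1}(W+s)$ and first pin down the data. Comparing measures gives $|\lambda| = |S|$, an integer $\ge 2$; iterating the relation once lets me replace $\lambda$ by $\lambda^2$, so I may assume $\lambda = m \ge 2$ is a positive integer and $mW = \bigsqcup_{s \in S}(W+s)$. Next I would show all shifts are integers: normalising $\min A = 0$ and taking the smallest non-integer shift $s_i$, the point $s_i$ lies in the interior of a unit segment of $mW$, so the copy abutting $W+s_i$ from the left has a shift $s_j = s_i - 1 - a < s_i$ with the same fractional part, contradicting minimality. Hence $S \subset \Z$, and iterating gives $m^k W = \bigsqcup_{s \in S_k}(W+s)$ with $S_k \subset \Z_{\ge 0}$.

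It remains to extract an honest segment tiled by shifts of $W$. Let $c$ be the length of the maximal run $\{0,1,\dots,c-1\} \subseteq A$ starting at $0$ (if $A$ is itself an interval, then $m^kW$ is already a segment and we are done). Since $0,\dots,c-1 \in A$ but $c \notin A$, the set $m^kW$ contains the solid segment $[0,cm^k]$ followed by a genuine gap of length $\ge m^k$. Choosing $k$ so large that $m^k > \diam W$, I claim no copy $W+s$ of the tiling can meet both $[0,cm^k]$ and the region beyond the gap: such a copy would have points on both sides of a gap of length $\ge m^k$, forcing $\diam W \ge m^k$, a contradiction. Consequently the copies meeting $[0,cm^k]$ lie entirely inside it (they cannot protrude to the left either, as $S_k \subset \Z_{\ge 0}$) and therefore tile it, so $W$ fills $[0,cm^k]$. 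The main obstacle is precisely this last step: because $W$ is disconnected, a single copy could a priori straddle a gap, and the whole reason for passing to a high power $m^k$ — making the blocks long compared with $\diam W$ — is to rule this out.
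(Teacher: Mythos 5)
Your proof is correct, and in both directions its core mechanism is the paper's argument viewed in rescaled coordinates. For the easy implication the paper fills each unit segment of $W$ with translates of $W/h$, where $h$ is the length of the tiled segment; your tiling of each block $Na+[0,N]$ of $NW$ is exactly that construction multiplied by $N$. For the converse the paper iterates the self-similarity $k$ times until the contracted copies of $M^{-k}W$ have diameter smaller than the gaps of $W$, so each copy lies inside a single segment of $W$; the copies inside the first segment $I_1$ then tile it, and expanding by $M^k$ yields a segment tiled by shifts of $W$. Your extraction step is the same estimate at the opposite scale: your solid run $[0,cm^k]$ is precisely $m^kI_1$, and your observation that no copy can straddle a gap of length $\ge m^k>\diam W$ is the paper's ``each small copy fits inside one segment''. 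What is genuinely different is your preprocessing: the measure count $|\lambda|=|S|$, the passage to $\lambda^2$ to obtain a positive integer dilation, and the minimal-non-integer-shift argument proving $S\subset\Z$. The paper needs none of this; its contraction argument is scale-free and indifferent to the sign, integrality of the dilation, or integrality of the shifts. Your integrality lemma is valid (and a nice fact in itself, anticipating the reduction to integer tiles carried out later in the section), but it is superfluous for this lemma: your final step only uses $S_k\subset[0,\infty)$, which already follows from $\min W=0$ and $m>0$, since each copy $W+s$ lies in $m^kW\subset[0,\infty)$ and hence $s=\min(W+s)\ge 0$. So the detour through integer shifts could be dropped, leaving essentially the paper's proof; what your longer route buys is an explicit description of the data (integer dilation $m=|S|^2$, integer nonnegative shifts, the explicit tiled segment $[0,cm^k]$) rather than a bare existence statement.
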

\begin{proof}
Let us suppose that such shifts exist and fill a segment of length $h$, and the set $W$ consists of segments $I_1, I_2, \ldots, I_n$ with integer lengths. Then it is possible to fill a segment of length one with the shifts $W / h$, so it is possible to fill all segments $I_1, I_2, \ldots, I_n$, that is, to fill the set $W$ with its dilated in $h$ times copies. 

Otherwise, having an attractor (not necessarily with integer elements), we can refine its partition to make it small enough, so that each contracted set is a subset of one of the segments of the  initial set. With the shifts of non-overlapping dilated sets, it is possible to fill, for example, the first segment of the set $W$. Expanding the contracted attractors back to the initial size, we obtain the tiling of a segment with the shifts of the initial set. 

Lemma \ref{to_tiling} is proved. 
\end{proof}

\begin{lemma}
If $W$ is an attractor consisting of finitely many segments, then all of its segments have the same length, and the distances between them are multiples of this length. 
\end{lemma}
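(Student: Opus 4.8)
The plan is to turn Definition \ref{def2} into a rigid self-tiling picture and then read off the two metric conclusions in stages: first that the scaling is integer, then commensurability of all breakpoints, then equality of the segment lengths, and finally integrality of the gaps.

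First I would pin down the scale. Write $W=I_1\sqcup\cdots\sqcup I_n$ for the maximal segments, with lengths $\ell_1,\dots,\ell_n$ separated by gaps, and let $M$ be the (scalar) expanding map. Comparing Lebesgue measure in $W=\bigcup_s M^{-1}(W+s)$ gives $\mu(W)=k\,|M|^{-1}\mu(W)$, where $k$ is the number of shifts; since $\mu(W)>0$ this forces $|M|=k=:m\in\N$, $m\ge 2$. A negative $M$ only reverses orientation, so I may assume $M>0$ and write $W=\bigsqcup_{j=1}^{m}\bigl(\tfrac1m W+c_j\bigr)$ with the copies overlapping in measure zero. Two elementary observations are then essential: each scaled interval $\tfrac1m I_i+c_j$, being a connected subset of $W$, lies inside a single big segment; and inside each big segment these scaled pieces meet only at endpoints and partition it. In particular, for each $j$ the original lengths of the pieces filling $I_j$ sum to $m\ell_j$, while each $I_i$ occurs among all the pieces exactly $m$ times (once per copy).

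Next I would establish commensurability. The maps $x\mapsto mx-s_j$ send breakpoints of $W$ to breakpoints, so the finite set $\partial W$ generates a finitely generated subgroup of $(\R,+)$ stable under these affine contractions; examining the minimal positive element of this group yields $\partial W\subset \beta+\lambda\Z$ for suitable $\lambda,\beta$. After rescaling by $1/\lambda$ all endpoints, hence all $\ell_i$ and all gaps, are integers. This is precisely the reduction that connects the lemma to the integer tilings studied after it (via Lemma \ref{to_tiling}).

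The hard part will be forcing all $\ell_i$ to be equal. A naive ``leftmost piece'' induction fails, because a single copy may place its pieces into several different big segments, using its own scaled internal gaps to straddle the gaps of $W$, so copies genuinely interleave. The lever I would use is that a copy has diameter $\diam(W)/m$ and internal gaps $g_i/m$, all strictly smaller than the largest gap $g_{\max}$ of $W$; hence \emph{no copy can bridge the largest gap}. Consequently the piece abutting the largest gap on its right must be the leftmost segment of a fresh copy, a scaled $I_1$ of length $\ell_1/m$, and symmetrically the piece on its left is a scaled $I_n$ of length $\ell_n/m$. Propagating this rigidity and combining it with the balance relations $\sum_{i\in S_j}\ell_i=m\ell_j$, the column condition (each $\ell_i$ used exactly $m$ times), and positivity, I expect a contradiction with $\ell_{\max}>\ell_{\min}$. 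This extremal/combinatorial step, where the interleaving must be controlled, is the main obstacle.

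Finally, once $\ell_1=\cdots=\ell_n=:\ell$, I would rescale so $\ell=1$. Then each big segment has length $1$ and is partitioned into pieces of length $1/m$, hence into exactly $m$ of them, so its internal breakpoints sit at $a_j+t/m$, $t=0,\dots,m$. Together with the integrality of endpoints from the commensurability step, this forces all left endpoints $a_j$ to be congruent modulo $1$, i.e.\ the distances between consecutive segments are integer multiples of $\ell$. After a translation this exhibits $W$ as a union of unit segments at integer positions, the normalized form required for the classification that follows.
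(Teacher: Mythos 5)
There is a genuine gap, and in fact you name it yourself: the central step of the lemma --- that all segment lengths are equal --- is never proved. Your final combinatorial plan (``propagating this rigidity \dots\ I expect a contradiction'') is a programme, not an argument: the balance relations $\sum_{i\in S_j}\ell_i=m\ell_j$, the column count (each $\ell_i$ used $m$ times), and the largest-gap observation do not, as stated, yield any contradiction from $\ell_{\max}>\ell_{\min}$, and controlling how copies interleave across the \emph{smaller} gaps is exactly the difficulty you stop at. (Your largest-gap observation itself is sound --- a contracted copy has internal gaps $g_i/m<g_{\max}$, so no copy can straddle the maximal gap --- though the accompanying claim that $\diam(W)/m<g_{\max}$ is false in general and fortunately unneeded.) There is also an outright error earlier: the commensurability step rests on ``examining the minimal positive element'' of the group generated by $\partial W$, but a finitely generated subgroup of $(\R,+)$ generally has no minimal positive element (e.g.\ $\Z+\sqrt{2}\,\Z$ is dense), and stability under $x\mapsto mx-s_j$ does not force discreteness. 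Commensurability of the endpoints is part of what the lemma asserts and cannot be pre-established this cheaply.

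The paper avoids both problems by taking a dual route. Instead of analysing the self-tiling of $W$ by its own contracted copies, it invokes Lemma \ref{to_tiling}: $W$ is an attractor of this kind if and only if finitely many non-overlapping \emph{translates of $W$} tile an interval. In that picture one normalizes the leftmost segment $I_1$ to length $1$; then the shift by exactly $1$ must occur (to fill the space just right of $I_1$), and all shifts are $\geqslant 1$ (else $I_1$ would overlap its own translate). From this, two short arguments finish: a segment of length $>1$ would intersect its own translate by $1$, so all lengths are $\leqslant 1$; and if the leftmost segment $J$ of length $a<1$ existed, the translate of $W$ by $1$ would leave a hole of size $1-a$ that only $J$ could fill, forcing a shift $<1$ --- a contradiction. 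Finally the gaps are tiled by unit segments, hence integer. If you want to salvage your direct self-similar approach, the equal-lengths step would need an actual extremal argument (and the commensurability claim must be dropped or derived \emph{after} equal lengths); the paper's reduction shows it is far easier to let translates of $W$, rather than contractions of $W$, do the tiling.
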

\begin{proof}
As we know from the proof of Lemma \ref{to_tiling}, several non-overlapping shifts of the set $W$ fill the segment. Without loss of generality we may assume that all shifts are positive, i.e., make the shift to the right. 

By dilating or expanding the picture, we assume that the length of the leftmost segment $I_1$ of $W$ is equal to one. If it is the only segment of the set $W$, the statement is proved. 
Otherwise, to fill the space just after a segment $I_1$,  there is a shift by one in the set of shifts. Let us notice that then the lengths of all shifts are not less than one, otherwise a segment $I_1$ will intersect itself after shifting. 

a) The lengths of all segments of $W$ are at most one, otherwise we get self-intersection of a segment of length more than one with the shift by one (see Fig. \ref{od_1}). 

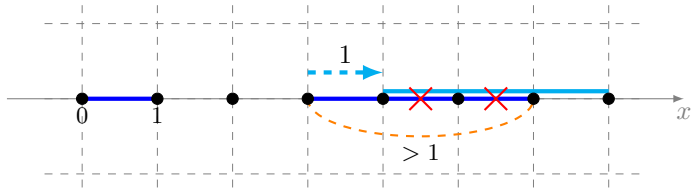
\begin{figure}[ht]
  \centering{
\begin{tikzpicture}
    \coordinate (Origin)   at (0,0);
    \coordinate (XAxisMin) at (-4,0);
    \coordinate (XAxisMax) at (5,0);
    \draw [thin, gray,-latex] (XAxisMin) -- (XAxisMax) node [below] {$x$};
    \clip (-5,-1.3) rectangle (5cm,1.3cm); 
    \pgftransformcm{0.5}{0}{0}{0.5}{\pgfpoint{0cm}{0cm}}
    \draw[style=help lines,dashed] (-7,-7) grid[step=2cm] (9,3);
\draw [ultra thick, blue] ($(-6, 0)$) -- ($(-4, 0)$);
\node at (-6, 0) [below] {\small$0$};
\node at (-4, 0) [below] {\small$1$};
\draw [ultra thick, blue] ($(0, 0)$) -- ($(6, 0)$);
\draw [ultra thick, cyan] ($(2, 0.2)$) -- ($(8, 0.2)$);
\draw [dashed, ultra thick, cyan, -latex] ($(0, 0.7)$) -- ($(2, 0.7)$);
\node at (1, 0.7) [above]{$1$};
\draw [dashed, thick, orange] (0, 0) arc(180:360:3 and 1);
\node at (3, -1.43) {\small{$> 1$}};
\draw [thick, red] ($(2.7, -0.3)$) -- ($(3.3, 0.3)$);
\draw [thick, red] ($(2.7, 0.3)$) -- ($(3.3, -0.3)$);
\draw [thick, red] ($(4.7, -0.3)$) -- ($(5.3, 0.3)$);
\draw [thick, red] ($(4.7, 0.3)$) -- ($(5.3, -0.3)$);
    \foreach \x in {-3,-2,...,4}{
        \node[draw,circle,inner sep=1.5pt,fill] at (2*\x, 0) {};
    }
  \end{tikzpicture}}
\caption{The proof that the lengths of the segments of one-dimensional attractor are at most one.}
\label{od_1}
\end{figure}

b) The lengths of all segments of $W$ are at least one. Indeed, suppose the converse, then we find the leftmost segment $J$ whose length is $a < 1$. After the shift by one we obtain the hole of the size $1 - a$ that is needed to be filled. The only segment lefts that can fit in this hole is the segment $J$. But in this case $J$ is moved by a distance smaller than one, it is a contradiction with the fact that the lengths of all shifts are at least one (see Fig. \ref{od_2}). 

\begin{figure}[ht]
\centering
  \begin{tikzpicture}
    \coordinate (XAxisMin) at (-5,0);
    \coordinate (XAxisMax) at (4,0);
    \draw [thin, gray,-latex] (XAxisMin) -- (XAxisMax) node [below] {$x$};
    \pgftransformcm{0.5}{0}{0}{0.5}{\pgfpoint{0cm}{0cm}}
    \clip (-10,-4) rectangle (8, 4);
\draw[style=help lines,dashed] (-10cm,-4cm) grid[step=3cm] (7,4);
\draw[ultra thick, blue] (-9, 0) -- (-6, 0);
\node at (-9, 0) [below] {$0$};
\node at (-6, 0) [below] {$1$};
\draw[ultra thick, blue] (0, 0) -- (1.2, 0);
\node at (0.6, 0) [above] {$J$};
\draw[ultra thick, cyan] (3, 0) -- (4.2, 0);
\draw[dashed, ultra thick, cyan, -latex] (0, 1.5) -- (3, 1.5);
\node at (1.5, 1.5) [above] {$1$};
\draw [dashed, thick, orange] (0, 0) arc(180:360:0.6 and 0.5);
\draw [dashed, thick, orange] (1.2, 0) arc(180:360:0.9 and 0.5);
\node at (0.6, -0.5)[below] {$a$};
\node at (2.1, -0.5)[below] {\small $1 - a$};
\draw [thick, red] (1.8, -0.3) -- (2.4, 0.3);
\draw [thick, red] (1.8, 0.3) -- (2.4, -0.3);
\foreach \x in {-3,-2,...,2}{
        \node[draw,circle,inner sep=1.5pt,fill] at (3*\x, 0) {};
    }
        \node[draw,circle,inner sep=1.5pt,fill] at (1.2, 0) {};
        \node[draw,circle,inner sep=1.5pt,fill] at (4.2, 0) {};
  \end{tikzpicture}
  \caption{The proof that the lengths of the segments of one-dimensional attractor are at least one.}
\label{od_2}
\end{figure}
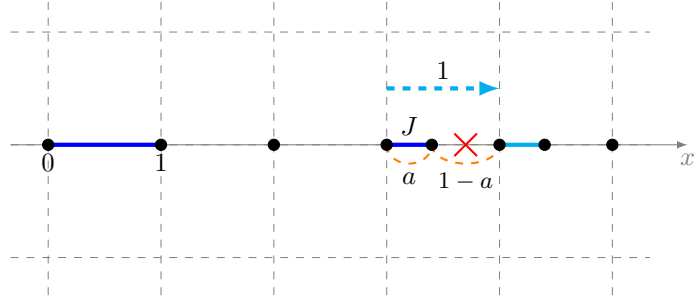

c) All gaps between segments $W$ have to be filled with non-overlapping segments of length one, so apparently all these gaps have integer lengths. 

The lemma is proved. 
\end{proof}

\begin{remark} \label{add_remark} 
The statement of Lemma \ref{to_tiling} holds for sets $W$ with not only integer ends of the segments. We have already proved it in one direction, let us show it in another one. 
Now we know that the lengths of all segments are equal, let their length be $l$. Suppose we tile the segment of length $l \cdot m$; then dilating the set in $m$ times we can fill with non-overlapping shifts an arbitrary segment of length $l$ and thus the whole set $W$. 
\end{remark}

Thus, with the use of similarity, we reduced the classification of one-dimensional attractors consisting of a finite number of segments to the classification of such finite sets of integer segments of length one on the line that some of their positive non-overlapping shifts fill the segment on the line, i.e. to the problem solved in \cite{Long}.  Below we give another proof of this result.  

Firstly consider the case when $W$ is the only segment of  length one, it is a simple one-dimensional attractor; in that follows we suppose that there are at least two segments. We also suppose that $W$ starts at the point $0$. For convenience, we will include the shift by $0$ in the set of shifts of $W$.

\begin{definition} We say that $W$ is an \textit{admissible set} if it is a finite set including $0$ and consisting of at least two integer segments of length one on the line, and there exists a set of shifts $L \ni 0$ such that the shifts of $W$ by the vectors of $L$ are non-overlapping and fill a segment on the line. 
\end{definition} 

Observe that $\{1, \ldots, l - 1\} \subset L$ since it is the only possible variant of the full tiling of a gap between first two segments (Fig. \ref{od_3}). All distances between all segments have to be at least $l$ to avoid overlapping of the segments by these shifts. 

\begin{figure}[ht]
  \centering
  \begin{tikzpicture}
    \coordinate (XAxisMin) at (-3.5, 0);
    \coordinate (XAxisMax) at (4, 0);
    \draw [thin, gray,-latex] (XAxisMin) -- (XAxisMax) node [below] {$x$};
    \pgftransformcm{0.5}{0}{0}{0.5}{\pgfpoint{0cm}{0cm}}
    \clip (-7,-3) rectangle (8, 3);
\draw[style=help lines,dashed] (-7, -3) grid[step=2cm] (7,3);
\draw[ultra thick, blue] (-6, 0) -- (-4, 0);
\node at (-6, 0) [below] {$0$};
\node at (-4, 0) [below] {$1$};
\draw[dashed, ultra thick, orange] (-4, 0) -- (-2, 0);
\draw[semithick, orange, -latex] (-6, 0.5) -- (-4, 0.5);
\draw[semithick, cyan, -latex] (-6, 1) -- (-2, 1);
\draw[semithick, red, -latex] (-6, 1.5) -- (0, 1.5);
\draw[semithick, green, -latex] (-6, 2) -- (2, 2);
\draw[dashed, ultra thick, cyan] (-2, 0) -- (0, 0);
\draw[dashed, ultra thick, red] (0, 0) -- (2, 0);
\draw[dashed, ultra thick, green] (2, 0) -- (4, 0);
\draw[ultra thick, blue] (4, 0) -- (6, 0);
\node at (4, 0) [below] {$l$};
\node at (-4, 0.5) [right] {\tiny $1$};
\node at (-2, 1) [right] {\tiny $2$};
\node at (2, 2) [right] {\tiny $l - 1$};
\foreach \x in {-3,-2,...,3}{
        \node[draw,circle,inner sep=1.5pt,fill] at (2*\x, 0) {};
    }
    \end{tikzpicture}
  \caption{All shifts of the segment $[0, 1]$ by ${1,\ldots ,l - 1}$ fill the first gap between the segments of an attractor.} 
\label{od_3}
\end{figure}
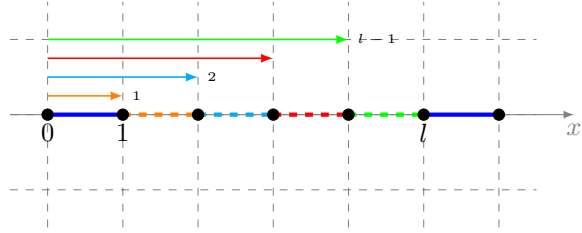

\begin{example}\label{onedimexam}
The example of the set $W$ and its shifts which satisfy all requirements is given in Fig. \ref{od_4}. In this case $L = \{0, 1, 2, 9, 10, 11\}$.
\end{example}

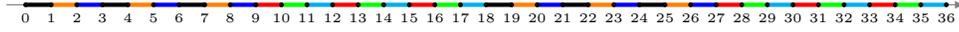
\begin{figure}[ht]
  \centering
  \begin{tikzpicture} [scale = 0.85]
    \coordinate (XAxisMin) at (-9, 0);
    \coordinate (XAxisMax) at (6, 0);
    \draw [thin, gray,-latex] (XAxisMin) -- (XAxisMax);
    \pgftransformcm{0.5}{0}{0}{0.5}{\pgfpoint{0cm}{0cm}}

\draw[ultra thick, black] (-18 * 0.8 - 3, 0) -- (-17 * 0.8 - 3, 0);
\draw[ultra thick, black] (-15 * 0.8 - 3, 0) -- (-14 * 0.8 - 3, 0);
\draw[ultra thick, black] (-12 * 0.8 - 3, 0) -- (-11 * 0.8 - 3, 0);
\draw[ultra thick, black] (0 * 0.8 - 3, 0) -- (1 * 0.8 - 3, 0);
\draw[ultra thick, black] (3 * 0.8 - 3, 0) -- (4 * 0.8 - 3, 0);
\draw[ultra thick, black] (6 * 0.8 - 3, 0) -- (7 * 0.8 - 3, 0);

\draw[ultra thick, orange] (-17 * 0.8 - 3, 0) -- (-16 * 0.8 - 3, 0);
\draw[ultra thick, orange] (-14 * 0.8 - 3, 0) -- (-13 * 0.8 - 3, 0);
\draw[ultra thick, orange] (-11 * 0.8 - 3, 0) -- (-10 * 0.8 - 3, 0);
\draw[ultra thick, orange] (1 * 0.8 - 3, 0) -- (2 * 0.8 - 3, 0);
\draw[ultra thick, orange] (4 * 0.8 - 3, 0) -- (5 * 0.8 - 3, 0);
\draw[ultra thick, orange] (7 * 0.8 - 3, 0) -- (8 * 0.8 - 3, 0);

\draw[ultra thick, blue] (-16 * 0.8 - 3, 0) -- (-15 * 0.8 - 3, 0);
\draw[ultra thick, blue] (-13 * 0.8 - 3, 0) -- (-12 * 0.8 - 3, 0);
\draw[ultra thick, blue] (-10 * 0.8 - 3, 0) -- (-9 * 0.8 - 3, 0);
\draw[ultra thick, blue] (2 * 0.8 - 3, 0) -- (3 * 0.8 - 3, 0);
\draw[ultra thick, blue] (5 * 0.8 - 3, 0) -- (6 * 0.8 - 3, 0);
\draw[ultra thick, blue] (8 * 0.8 - 3, 0) -- (9 * 0.8 - 3, 0);

\draw[ultra thick, red] (-9 * 0.8 - 3, 0) -- (-8 * 0.8 - 3, 0);
\draw[ultra thick, red] (-6 * 0.8 - 3, 0) -- (-5 * 0.8 - 3, 0);
\draw[ultra thick, red] (-3 * 0.8 - 3, 0) -- (-2 * 0.8 - 3, 0);
\draw[ultra thick, red] (9 * 0.8 - 3, 0) -- (10 * 0.8 - 3, 0);
\draw[ultra thick, red] (12 * 0.8 - 3, 0) -- (13 * 0.8 - 3, 0);
\draw[ultra thick, red] (15 * 0.8 - 3, 0) -- (16 * 0.8 - 3, 0);

\draw[ultra thick, green] (-8 * 0.8 - 3, 0) -- (-7 * 0.8 - 3, 0);
\draw[ultra thick, green] (-5 * 0.8 - 3, 0) -- (-4 * 0.8 - 3, 0);
\draw[ultra thick, green] (-2 * 0.8 - 3, 0) -- (-1 * 0.8 - 3, 0);
\draw[ultra thick, green] (10 * 0.8 - 3, 0) -- (11 * 0.8 - 3, 0);
\draw[ultra thick, green] (13 * 0.8 - 3, 0) -- (14 * 0.8 - 3, 0);
\draw[ultra thick, green] (16 * 0.8 - 3, 0) -- (17 * 0.8 - 3, 0);

\draw[ultra thick, cyan] (-7 * 0.8 - 3, 0) -- (-6 * 0.8 - 3, 0);
\draw[ultra thick, cyan] (-4 * 0.8 - 3, 0) -- (-3 * 0.8 - 3, 0);
\draw[ultra thick, cyan] (-1 * 0.8 - 3, 0) -- (0 * 0.8 - 3, 0);
\draw[ultra thick, cyan] (11 * 0.8 - 3, 0) -- (12 * 0.8 - 3, 0);
\draw[ultra thick, cyan] (14 * 0.8 - 3, 0) -- (15 * 0.8 - 3, 0);
\draw[ultra thick, cyan] (17 * 0.8 - 3, 0) -- (18 * 0.8 - 3, 0);

\foreach \x in {-18,-17,...,18}{
        \node[draw,circle,inner sep=0.5pt,fill] at (\x * 0.8 - 3, 0) {};
}
\foreach \x in {0,1,...,36}{
	\node at (\x * 0.8 - 18 * 0.8 - 3, 0) [below] {\tiny{\x}};
    }
    \end{tikzpicture}
  \caption{The example of one-dimensional attractor and its shifts.}
\label{od_4}
\end{figure}

\subsection{Auxiliary result}
\begin{definition} We say that a set of integer numbers $Q$ is an $l$\textit{-set}, if it has finitely many segments (consecutive integer numbers), the beginning of each segment (the first number in the sequence of consecutive numbers) is divisible by $l$, and the lengths of segments (the amount of consecutive numbers in blocks) are equal to $l$. 
\end{definition}

In the example \ref{onedimexam}, $L$ is a 3-set. 

\begin{theorem} \label{doubleind}
Let $W \subset \R$ be an admissible attractor.   
Then the following hold for every $k \in \Z \ge 0$: 

1) the beginnings of all segments in $W$ that do not exceed $k$ are divisible by $l$; 

2) $L \cap \{0, 1, \ldots, k, k + 1\}$ is an $l$-set intersected with $\{0, 1, \ldots, k, k + 1\}$.
\end{theorem}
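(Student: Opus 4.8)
The first thing I would do is turn the geometric statement into a statement about integer tilings. Let $A=\{a_0=0<a_1=l<a_2<\cdots\}$ be the set of left endpoints of the unit segments of $W$, and keep $L$ for the shift set (both contain $0$). Admissibility says the translates $W+\ell$, $\ell\in L$, tile an interval $[0,N]$ without overlap; reading off left endpoints of the unit cells turns this into a \emph{direct sum} factorization $A\oplus L=\{0,1,\ldots,N-1\}$, i.e. every integer in that range has a unique representation $n=a+\ell$ with $a\in A$, $\ell\in L$. From the discussion preceding the theorem I may already use that $\{0,1,\ldots,l-1\}\subseteq L$, that non-overlap forces $|a-a'|\ge l$ for distinct $a,a'\in A$, and (by a one-line uniqueness check) that $l,\ldots,2l-1\notin L$. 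These facts verify claims 1) and 2) for $k=0,\ldots,l-1$ and form the base of the induction.

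\textbf{The staggered induction and its engine.} I would prove 1) and 2) by one induction on $k$, but run the two clauses in the staggered (``double'') order forced by which positions each clause inspects. Writing $k+1=ml+r$ with $0\le r\le l-1$, clause 2) at $k$ only concerns the $L$-status of $k+1$; fixing it requires the representations $k+1=a+\ell$ with $a<k+1$, hence only $A\cap[0,k]$ (already multiples of $l$) and $L\cap[0,k]$ (already an $l$-set), i.e. clauses $1)_k$ and $2)_{k-1}$. Once $2)_k$ is in hand, clause 1) at $k+1$ may consult the now-known $L$-status of $k+1$. The uniform engine is uniqueness of representation: two representations of one position contradict the direct sum. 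By the inductive hypothesis any representation $ml+r=jl+\big((m-j)l+r\big)$ is valid precisely when block $\{(m-j)l,\ldots,(m-j)l+l-1\}\subseteq L$, \emph{independently of the offset} $r$; this offset-independence is exactly what propagates the block pattern.

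\textbf{Carrying out the routine cases.} For clause 1) suppose $ml+r\in A$ with $1\le r\le l-1$, chosen least among elements of $A$ not divisible by $l$. If $ml\in A$, then $ml$ and $ml+r$ are elements of $A$ at distance $r<l$, impossible. If $ml\notin L$, the existing cover of position $ml$ has the form $(jl,(m-j)l)$ with $jl\in A$ and block $m-j\subseteq L$; offset-independence then makes $(jl,(m-j)l+r)$ a second representation of $ml+r$ besides $(ml+r,0)$ — contradiction. Clause 2) is symmetric: when block $m$ is already known to be out of $L$, offset-independence forces $ml+r\notin L$, while at a fresh block start ($r=0$) either choice is consistent with an $l$-set, so nothing is to prove.

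\textbf{The main obstacle.} The crux is the overlap of both clauses in the remaining configuration: the block start lies in $L$, so $ml,\ldots,ml+r-1\in L$, and one must exclude $ml+r\in A$ (equivalently, forbid an $A$-element in the interior of a block). Here the local double-cover arguments stall — every position to the left is consistently filled, and neither $ml+r\in A$ nor $ml+r\in L$ clashes with the data already fixed. This is precisely the combinatorial heart behind Long's theorem (de Bruijn's lemma), and I expect it to require propagating uniqueness beyond the single block. The route I would pursue is an infinite descent on the block index, using $l\in A$ to rewrite $ml+r=l+\big((m-1)l+r\big)$ and to track the forced in/out alternation of blocks $m-1,m-2,\ldots$: examining the cover of $(m-1)l+r$ already splits into a subcase that clashes immediately (via $l\in A$ together with $(m-1)l+r\in L$) and a subcase that reproduces the same anomaly in a strictly lower block, so that descending to block $0$ — which is known to lie in $L$ — must eventually produce a contradiction. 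Making this descent fully rigorous, and threading it back so that clauses 1) and 2) close in step, is where the genuine work of the theorem lies.
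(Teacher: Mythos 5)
Your skeleton --- the direct-sum reformulation $A \oplus L = \{0,1,\ldots,N-1\}$, the staggered double induction, and the uniqueness-of-representation engine with block ``offset-independence'' --- is exactly the mechanism of the paper's proof. But the configuration you single out as the ``main obstacle'' is not an obstacle under your own staging, and the infinite descent you sketch for it (and explicitly leave unexecuted) is unnecessary. You stage clause 2) at level $k$ \emph{before} clause 1) at level $k+1$, and clause 2) at level $k$ certifies the structure of $L \cap \{0,1,\ldots,k+1\}$ --- a range that includes $k+1 = ml+r$ itself. So in your stalled case, where $ml \in L$: since $ml \le k$ is divisible by $l$, it must be the \emph{beginning} of a block of the certifying $l$-set (no interior element of a block is divisible by $l$), hence every element of $\{ml,\ldots,ml+l-1\}$ lying in $\{0,\ldots,k+1\}$ belongs to $L$; in particular $ml+r = k+1 \in L$. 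Then $ml+r \in A$ yields the double representation $k+1 = (k+1)+0 = 0+(k+1)$, and you are done. This is just the $j=0$ instance of your own ``routine'' argument, and it is precisely the paper's inner lemma (that $\{\Delta+1,\ldots,\Delta+l-1\} \cap \{0,\ldots,k+1\} \subset L$ for the shift $\Delta$ covering position $lh$), which the paper applies uniformly with no descent. Your claim that ``neither $ml+r\in A$ nor $ml+r\in L$ clashes with the data already fixed'' overlooks that the $l$-set property on the full range $\{0,\ldots,k+1\}$ is itself part of the data already fixed.

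Conversely, you dismiss the inductive step for clause 2) as ``symmetric \ldots nothing is to prove,'' but that is where the theorem's real case analysis lives, and your dispatch of it is false as stated. At a fresh block start ($k+2 = lh$) it is \emph{not} true that either choice is consistent with an $l$-set: if the previous block $\{l(h-1),\ldots,lh-1\}$ already lies in $L$, then $lh \in L$ would merge two blocks into a run of length exceeding $l$, so this configuration must be actively excluded --- the paper does it via the double cover of $[lh,lh+1]$ produced by the shifts $l(h-1)$ and $lh$ applied to the two initial segments $[0,1]$ and $[l,l+1]$ of $W$. Likewise, when $k+2 \notin L$ but $k+1 \in L$ with the run through $k+1$ shorter than $l$, a contradiction must be derived, and the paper needs two genuinely different subcases (whether the segment starting at $k+2$ lies in $W$, or $[k+2,k+3]$ is covered by a shift of an earlier segment), with arguments reaching position $k+2+l$ and the nearest multiples of $l$ on either side of $k+2$; these hypotheticals concern $A$ beyond the range certified by clause 1), so they cannot be waved away by symmetry. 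In sum: the proposal leaves unproved the clause whose difficulty it denies, and defers to an unneeded, unexecuted descent the case its own induction hypothesis settles in one line. The repair requires no new idea --- only carrying out the four-subcase double-cover analysis that the paper performs.
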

\begin{proof}
The proof is by induction on $k$. 

\textsl{Base}: $k = 0$. Then in 1) there is only one beginning $0$, and it is divisible by $l$; in 2) we obtain $L \cap \{0, 1, \ldots, k, k + 1\} = \{0, 1\}$, we can choose $\{0, 1, \ldots, l - 1\}$ as an $l$-set . 

\textsl{Step}: suppose the induction hypothesis holds for $k$, let us establish it for $k + 1$. 

To check 1) it is enough to prove that if $k + 1$ is the beginning of the segment $I$, then $k + 1$ is divisible by $l$. Suppose that $k + 1$ is not divisible by $l$, $k + 1 = l\cdot h + r$, $r \neq 0$, $r < l$ (Fig. \ref{od_5}).

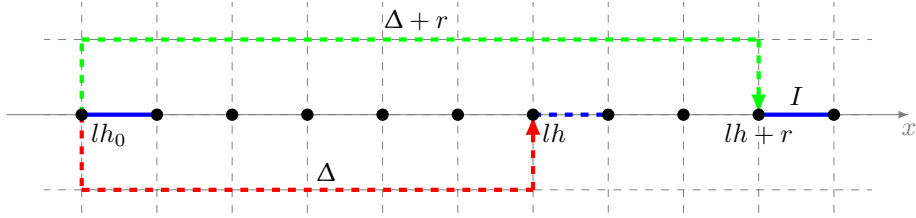
\begin{figure}[ht]
  \centering
  \begin{tikzpicture}
    \coordinate (Origin)   at (0,0);
    \coordinate (XAxisMin) at (-6,0);
    \coordinate (XAxisMax) at (6,0);
    \draw [thin, gray,-latex] (XAxisMin) -- (XAxisMax) node [below] {$x$};
    \clip (-5.5,-1.3) rectangle (5.5cm,1.7cm); 
    \pgftransformcm{0.5}{0}{0}{0.5}{\pgfpoint{0cm}{0cm}}
    \draw[style=help lines,dashed] (-14,-14) grid[step=2cm] (12,3);
   \draw [ultra thick, blue] ($(-10, 0)$) -- ($(-8, 0)$);
\node at (-10, 0) [below right]{$lh_0$};
\draw [dashed, ultra thick, blue] ($(2, 0)$) -- ($(4, 0)$);
\draw [dashed, ultra thick, red] ($(-10, 0)$) -- ($(-10, -2)$);
\draw [dashed, ultra thick, red] ($(-10, -2)$) -- ($(2, -2)$);
\draw [dashed, ultra thick, red, -latex] ($(2, -2)$) -- ($(2, 0)$);
\node at (-4, -2) [above right] {$\Updelta$};
\node at (2, 0) [below right]{$lh$};
\draw [ultra thick, blue] ($(8, 0)$) -- ($(10, 0)$);
\draw [dashed, ultra thick, green] ($(-10, 0)$) -- ($(-10, 2)$);
\draw [dashed, ultra thick, green] ($(-10, 2)$) -- ($(8, 2)$);
\draw [dashed, ultra thick, green, -latex] ($(8, 2)$) -- ($(8, 0)$);
\node at (0, 2) [above left] {$\Updelta + r$};
\node at (8, 0) [below]{$lh + r$};
\node at (9, 0) [above] {$I$};
    \foreach \x in {-7,-6,...,5}{
        \node[draw,circle,inner sep=1.5pt,fill] at (2*\x, 0) {};
    }
  \end{tikzpicture}
  \caption{The proof of the induction step of the statement 1) of the theorem.}
\label{od_5}
\end{figure}

The set $W$ does not have a segment with the beginning in the point $l \cdot h$. Otherwise, if the segment $[lh, lh+1]$ was a subset of $W$, as its distance to $I$ is less than $l$, it would translate with the shift by $r\in L$ to $I \subset W$, leading to self-intersection. So, there is a shift by $\Updelta$ in $L$ that translates one of the previous segments to the point $l \cdot h$. 
By the induction hypothesis 1), this previous segment starts in the point $l \cdot h_0$, $\Updelta = lh - lh_0$, i.e. $\Updelta$ is divisible by $l$; notice also that $\Updelta \le k$.

\begin{lemma} Under the assumptions of Theorem \ref{doubleind}, we have 
$$\{\Updelta + 1, \ldots, \Updelta + l - 1\} \cap \{0, \ldots, k + 1\} \subset L \cap \{0, \ldots, k + 1\}.$$
\end{lemma}
\begin{proof}
By the induction hypothesis 1) of Theorem \ref{doubleind}, $L \cap \{0, \ldots, k + 1\}$ is the beginning of an $l$-set, therefore $\Updelta \in L\cap \{0, \ldots, k + 1\}$ lies in one of the segments of this $l$-set, suppose that in $L_1$. Due to the fact that $L_1$ is a segment of an $l$-set, the only number in it that is divisible by $l$ is its beginning. As $\Updelta$ is a multiple of $l$, exactly $\Updelta$ is the beginning of this segment, then 

\begin{gather*}
\{\Updelta, \Updelta + 1, \ldots, \Updelta + l - 1\} \cap \{0, \ldots, k + 1\} \\
= L_1 \cap \{0, \ldots, k + 1\} \subset L \cap \{0, \ldots, k + 1\}.
\end{gather*} 
The lemma is proved. 
\end{proof} 

\medskip

Observe that $\Updelta + r = lh - lh_0 + r \le lh + r = k + 1$, therefore $\Updelta + r \in \{\Updelta + 1, \ldots, \Updelta + l - 1\} \cap \{0, \ldots, k + 1\}$, then $\Updelta + r \in L$. However, with this shift by $\Updelta + r$ the segment with the beginning $lh_0$ moves to the segment $I$; that is a contradiction with the fact that $I \subset W$.

Let us check 2). Suppose the converse, that for $k + 2$ the statement 2) does not hold (as we know, it holds for $k + 1$). 

\textbf{Case 1:} $k + 2 \notin L$. The condition 2) is no longer true only if $k + 1 \in L$ and the segment of $L$ that contains $k + 1$ has the length less than $l$. In this case, $k + 2$ is not divisible by $l$. 

The beginning of the second segment of $W$ translates to the point $k + 1 + l$ with the shift by $k + 1 \in L$, thus, the whole space left to the $k + 2 + l$ will be eventually filled. 

\textit{Subcase A:} the segment that starts at $k + 2$ (let it be $J$) lies in $W$. 

Consider the nearest to the point $k + 2$ points $A$ (lefts) and $B$ (rights) such that they are divisible by $l$; they do not coincide with $k + 2$ since $k + 2$ is not a multiple of $l$ (Fig. \ref{od_6}).

\begin{figure}[ht]
  \centering
  \begin{tikzpicture}
    \coordinate (Origin)   at (0,0);
    \coordinate (XAxisMin) at (-4,0);
    \coordinate (XAxisMax) at (4,0);
    \draw [thin, gray,-latex] (XAxisMin) -- (XAxisMax) node [below] {$x$};
    \clip (-3.5,-1.3) rectangle (3.5cm,1.3cm); 
    \pgftransformcm{0.5}{0}{0}{0.5}{\pgfpoint{0cm}{0cm}}
    \draw[style=help lines,dashed] (-10,-10) grid[step=2cm] (12,3);
\draw [ultra thick, blue] ($(-2, 0)$) -- ($(0, 0)$);
\draw [dashed, thick, red] (6, 0) arc(0:180:6 and 1.3);
\node at (0.2, 1.3) [above]{$l$};
\draw [dashed, thick, red] (-2, 0) arc(180:360:4 and 1);
\node at (2, -1.42) {\small{$< l$}};
\node at (-1, 0) [above]{$J$};
\node at (-2, 0) [below]{\small{$k + 2$}};
\node at (-6, 0) [below right]{$A$ };
\node at (6, 0) [below right] {$B$};

\draw [ultra thick, blue] ($(8, 0)$) -- ($(10, 0)$);
    \foreach \x in {-4,-3,...,3}{
        \node[draw,circle,inner sep=1.5pt,fill] at (2*\x, 0) {};
    }
  \end{tikzpicture}
  \caption{The case 1,A in the proof of the induction step.}
\label{od_6}
\end{figure}
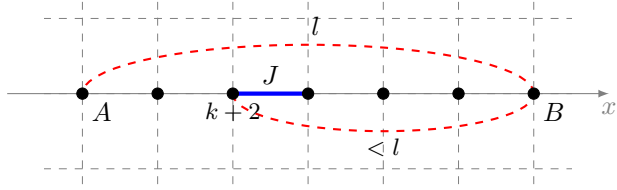

The coordinate $B + 1$ does not exceed $k + 2 + l$, since $[B, B + 1]$ definitely lies in the segment that we will eventually fill. 

As the distance from $k + 2$ to $B$ is less than $l$, $J \subset W$, and $\{1, \ldots, l - 1\} \subset L$, there is a shift in $L$ translating $J$ to $[B, B + 1]$. 

On the other hand, notice that $A \in L$, since $A$ is the beginning of the segment in $L$ that contains $k + 1$. The set $W$ begins with two segments $[0, 1]$ and $[l, l + 1]$. The second one translates with the shift by $A$ to the segment $[B, B + 1]$, i.e. we obtain this segment with the two different shifts from $L$; which is a contradiction. 

\textit{Subcase B:} $[k + 2, k + 3]$ is filled with the shift of some previous segment $I \subset W$. 

The segment $[k + 1, k + 2]$  is not a subset of $W$ since  the segment $[0, 1]$ translates there with the shift by $k + 1$. Thus, the beginning of $I$ is not larger than $k$; then by the induction hypothesis 1) $I = [lh, lh + 1]$. 

Then $k + 2 \notin L$, hence $h \ne 0$, thus, the size of the shift $\Updelta$ equals to $k + 2 - lh \le k + 2 - l \le k + 1$. 
Thus, $\Updelta \in L \cap \{0, 1 \ldots k + 1\}$, this set is the beginning of an $l$-set, therefore $\Updelta$ lies in $L$ in some segment, moreover, not in its beginning as $\Updelta = (k + 2) - lh$ is not divisible by $l$. Then $\Updelta - 1$ lies in the same segment $L$, i.e. $\Updelta - 1 \in L$. Then with the shift by $\Updelta - 1$ $I = [lh, lh + 1]$ translates to $[k + 1, k + 2]$. However, the segments $[0, 1]$ also moves there with the shift by $k + 1 \in L$; which is a contradiction (Fig. \ref{od_7}). 

\begin{figure}[ht]
  \centering
  \begin{tikzpicture}
    \coordinate (XAxisMin) at (-5.5, 0);
    \coordinate (XAxisMax) at (4.5, 0);
    \draw [thin, gray,-latex] (XAxisMin) -- (XAxisMax) node [below] {$x$};
    \pgftransformcm{0.5}{0}{0}{0.5}{\pgfpoint{0cm}{0cm}}
    \clip (-11,-3) rectangle (9, 4);
\draw[style=help lines,dashed] (-11, -3) grid[step=2cm] (8.5,3);
\draw[ultra thick, blue] (-10, 0) -- (-8, 0);
\node at (-10, 0) [below] {$0$};
\node at (-8, 0) [below] {$1$};
\draw[ultra thick, blue] (-4, 0) -- (-2, 0);
\node at (-4, 0) [below right] {\tiny$lh$};
\node at (-2, 0) [below] {\tiny$lh + 1$};
\node at (-3, 0) [above] {$I$};
\draw[ultra thick, cyan] (4, 0) -- (6, 0);
\draw[ultra thick, orange] (6, 0) -- (8, 0);
\draw[dashed, ultra thick, cyan] (-10, 0) -- (-9.7, 1.2) -- (3.3, 1.2);
\draw[dashed, ultra thick, cyan, -latex] (3.3, 1.2) -- (4, 0);
\draw[dashed, ultra thick, orange] (-4, 0) -- (-4, -2) -- (6, -2);
\draw[dashed, ultra thick, orange, -latex] (6, -2) -- (6, 0);
\draw [dashed, ultra thick, red, -latex] (-4, 0) arc(180:0:4 and 2.8);
\node at (0, 3.2) {\small$\Updelta - 1$};
\node at (1, -2) [above] {$\Updelta$};
\node at (4.3, 0) [below] {\tiny$k + 1$};
\node at (6, 0) [below right] {\tiny$k + 2$};
\foreach \x in {-5,-4,...,4}{
        \node[draw,circle,inner sep=1.5pt,fill] at (2*\x, 0) {};
    }
    \end{tikzpicture}
  \caption{The case 1,B in the proof of the induction hypothesis.}
\label{od_7}
\end{figure}
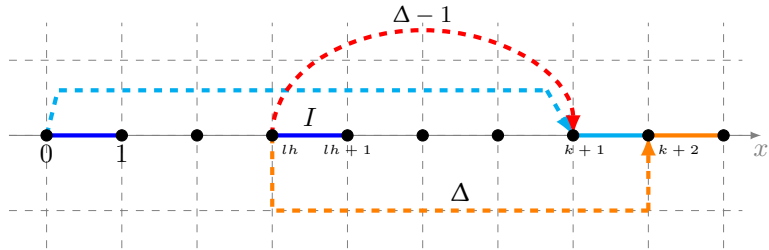

\textbf{Case 2:} $k + 2 \in L$. 

\textit{Subcase A:} $k + 2$ is divisible by $l$. 

Then $k + 2 = lh$. We have a contradiction with 2) for $k + 2$ only in the case if $L$ already contains $l(h - 1), \ldots, l(h - 1) + l - 1$, and the addition of $k + 2$ makes the segment from $L$ longer than $l$. However, the first segments of the set $W$ $[0, 1]$ and $[l, l + 1]$ translate with the shift by $l(h - 1)$ to $[l(h - 1), l(h - 1) + 1]$ and $[lh, lh + 1]$, so we obtain the segment $[lh, lh + 1]$ in two ways -- from the segment $[0, 1]$ and from the segment $[l, l + 1]$; which is a contradiction. 

\textit{Subcase B:} $k + 2$ is not divisible by $l$. 

Statement 2) is true for $k + 1$, therefore we can have a contradiction for $k + 2$ only in the case if $k + 1 \notin L$. 

The segment $[k + 1, k + 2] \not\subset W$, otherwise it translates to $[k + 2, k + 3]$ with the shift by one, and we obtain $[k + 2, k + 3]$ in two ways, from $[0, 1]$ and $[k + 1, k + 2]$ (Fig. \ref{od_8}). 
\begin{figure}[ht]
  \centering
  \begin{tikzpicture}
    \coordinate (XAxisMin) at (-3.5, 0);
    \coordinate (XAxisMax) at (4, 0);
    \draw [thin, gray,-latex] (XAxisMin) -- (XAxisMax) node [below] {$x$};
    \pgftransformcm{0.5}{0}{0}{0.5}{\pgfpoint{0cm}{0cm}}
    \clip (-7,-3) rectangle (8, 3);
\draw[style=help lines,dashed] (-7, -3) grid[step=2cm] (7,3);
\draw[ultra thick, blue] (-6, 0) -- (-4, 0);
\node at (-5.7, 0) [below] {$0$};
\node at (-3.7, 0) [below] {$1$};
\draw[ultra thick, blue] (2, 0) -- (4, 0);
\draw[dashed, ultra thick, red] (4, 0) -- (6, 0);
\draw[dashed, ultra thick, orange] (-6, 0) -- (-6, -2) -- (4, -2);
\draw[dashed, ultra thick, orange, -latex] (4, -2) -- (4, 0);
\draw[dashed, ultra thick, cyan] (2, 0) -- (2, 1) -- (4, 1);
\draw[dashed, ultra thick, cyan, -latex] (4, 1) -- (4, 0);
\node at (2, 0) [below] {\small$k + 1$};
\node at (4.3, 0) [below] {\small$k + 2$};
\node at (-1, -2) [above] {\small $k + 2$};
\node at (3, 1) [above] {$1$};
\foreach \x in {-3,-2,...,3}{
        \node[draw,circle,inner sep=1.5pt,fill] at (2*\x, 0) {};
    }
    \end{tikzpicture}
  \caption{The case 2, B (part 1) in the proof of the induction hypothesis.}
\label{od_8}
\end{figure}
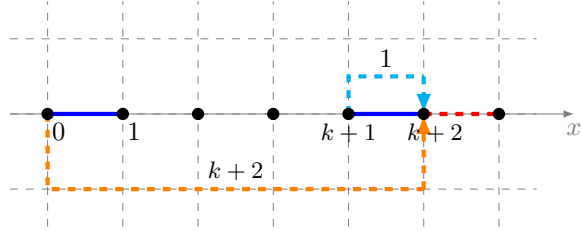

So, $[k + 1, k + 2]$ can be obtained with some shift from $[lh_0, lh_0 + 1]$ (as by the induction hypothesis 1) the beginnings of segments which are less than $k$ are divisible by $l$). Let $\Updelta = k + 1 - lh_0$ be the length of this shift. The number $h_0 \ne 0$ since $k + 1 \notin L$, thus, $\Updelta \le k + 1 - l$; than the whole segment in $L$ which contains $\Updelta$ also lies in $L \cap \{0, 1, \ldots, k + 1\}$. As we consider the subcase $B$,  $\Updelta + 1$ is not divisible by $l$ and that is why it lies in the same segment in $L \cap \{0, 1, \ldots, k + 1\}$. The segment $[lh_0, lh_0 + 1]$ translates to the segment $[k + 2, k + 3]$ with the shift by $\Updelta + 1$; it is a contradiction with the fact that we also obtain it from $[0, 1]$ (Fig. \ref{od_9}).
Theorem is proved. 

\begin{figure}[ht]
  \centering
  \begin{tikzpicture}
    \coordinate (XAxisMin) at (-5.5, 0);
    \coordinate (XAxisMax) at (4.5, 0);
    \draw [thin, gray,-latex] (XAxisMin) -- (XAxisMax) node [below] {$x$};
    \pgftransformcm{0.5}{0}{0}{0.5}{\pgfpoint{0cm}{0cm}}
    \clip (-11,-3) rectangle (9, 4);
\draw[style=help lines,dashed] (-11, -3) grid[step=2cm] (8.5,3);
\draw[ultra thick, blue] (-10, 0) -- (-8, 0);
\node at (-9.7, 0) [below] {$0$};
\node at (-7.7, 0) [below] {$1$};
\draw[ultra thick, blue] (-4, 0) -- (-2, 0);
\node at (-4, 0) [below right] {\tiny$lh_0$};
\node at (-2, 0) [below right] {\tiny$lh_0 + 1$};
\draw[ultra thick, cyan] (4, 0) -- (6, 0);
\draw[ultra thick, orange] (6, 0) -- (8, 0);
\draw[dashed, ultra thick, cyan] (-4, 0) -- (-4, 1) -- (4, 1);
\draw[dashed, ultra thick, cyan, -latex] (4, 1) -- (4, 0);
\draw[dashed, ultra thick, orange] (-10, 0) -- (-10, -2) -- (6, -2);
\draw[dashed, ultra thick, orange, -latex] (6, -2) -- (6, 0);
\draw [dashed, ultra thick, red, -latex] (-4, 0) arc(180:0:5 and 2.8);
\node at (1.6, 2.3) {\small$\Updelta + 1$};
\node at (0, 1.4) {\small$\Updelta$};
\node at (4.3, 0) [below] {\tiny$k + 1$};
\node at (6, 0) [below right] {\tiny$k + 2$};
\foreach \x in {-5,-4,...,4}{
        \node[draw,circle,inner sep=1.5pt,fill] at (2*\x, 0) {};
    }
    \end{tikzpicture}
  \caption{The case 2, B (part 2) in the proof of the induction hypothesis.}
\label{od_9}
\end{figure}
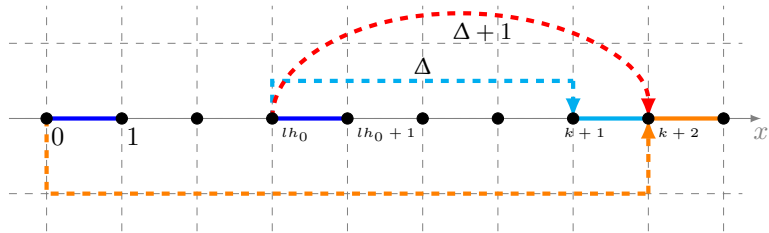
\end{proof}

\subsection{The fundamental theorem on one-dimensional attractors} Now, using Theorem \ref{doubleind}, we figure out the structure of attractors on the line. We consider attractors with integer ends of the segments, with the beginning in zero, we do not restrict the lengths of the segments to one. 

We switch from attractors consisting of segments and tiling with shifts a segment of the line to attractors consisting of integer numbers tiling with shifts a set of consecutive integer numbers (we will call such set a \textit{segment of integer numbers}). Specifically, let $W$ be an attractor consisting of segments. Let us define the integer attractor $Y$ by the rule 
$$z \in Y \Leftrightarrow [z, z + 1] \subset W.$$
The set $Y$ also has the beginning in zero and is not empty. If the attractor $W$ tiles with some set of shifts the segment $[0, y_0 + 1]$, then with the same set of shifts the attractor $Y$ tiles the segment of integer numbers $\{0, 1, \ldots, y_0\}$ (and thus it is an attractor). Conversely, by the attractor of integer numbers $Y$ with the beginning in zero which filles with the set of its shifts $L$ a segment of integer numbers $\{0, 1, \ldots, y_0\}$, the set  
$$W =\bigcup \limits_{y \in Y} {[y, y + 1]}$$ 
consisting of segments with integer ends is uniquely determined. Shifting the set $W$ by the elements of the set $L$, we obtain a tiling of the segment $[0, y_0 + 1]$ with one layer. So, the set $W$ is an attractor consisting of segments. Thus, these problems of classification of attractors are equivalent.   

\begin{definition}
Suppose $A = \{a_1, \ldots, a_n\}$, $B = \{b_1, \ldots, b_k\}$ and for every different pairs $(i_1, j_1)$ and $(i_2, j_2)$ we have $a_{i_1} + b_{j_1} \ne a_{i_2} + b_{j_2}$. Then we define \textit{the sum of the sets} $A$ and $B$ as
$$A \oplus B = \{a_i + b_j \mid i = 1 \ldots n, j = 1 \ldots k\}.$$
\end{definition}

\begin{lemma}
If $A \oplus B = A \oplus C$, then $B = C$.
\end{lemma}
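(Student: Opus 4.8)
The plan is to encode each finite set of integers as a generating polynomial and thereby reduce the cancellation statement to cancellation in an integral domain. To a finite set $S \subset \Z$ I associate its generating (Laurent) polynomial $f_S(x) = \sum_{s \in S} x^s \in \Z[x, x^{-1}]$. Since $\Z[x,x^{-1}]$ is a localization of the polynomial ring $\Z[x]$, it is an integral domain, so possibly negative elements of $A$, $B$, $C$ cause no difficulty; alternatively, one may translate $A$ and (simultaneously) $B,C$ so that all exponents become nonnegative and work inside $\Z[x]$, since such translations preserve both the hypothesis $A\oplus B = A\oplus C$ and the conclusion $B=C$.

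The key observation is that the defining property of $A \oplus B$ — that the sums $a_i + b_j$ are pairwise distinct — is exactly what makes the product factor through the set $A\oplus B$. Indeed, expanding $f_A(x)\,f_B(x) = \sum_{i,j} x^{a_i + b_j}$, the distinctness of exponents means every monomial occurs exactly once, so $f_A f_B = f_{A\oplus B}$, the generating polynomial of $A\oplus B$ with all coefficients equal to $1$. The same argument gives $f_A f_C = f_{A\oplus C}$. Because $A\oplus B = A\oplus C$ as sets, their generating polynomials coincide, and hence $f_A(x)\,f_B(x) = f_A(x)\,f_C(x)$.

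Now $A$ is nonempty (the definition writes $A = \{a_1,\ldots,a_n\}$), so $f_A \neq 0$; cancelling the nonzero factor $f_A$ in the integral domain yields $f_B = f_C$. Finally, two finite sets with the same generating polynomial are equal, since the exponents occurring with nonzero coefficient are precisely the elements of the set. Therefore $B = C$.

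I expect no genuine obstacle here: the argument is essentially a one-line cancellation once the right language is set up. The only points that need care are, first, that the statement $A\oplus B = A\oplus C$ presupposes both $\oplus$-sums are defined, so the uniqueness-of-representation property holds for each pair — this is exactly what guarantees $f_A f_B$ and $f_A f_C$ have coefficients in $\{0,1\}$ and equal the sumset polynomials; and second, that one works in $\Z[x,x^{-1}]$ (or normalizes by a translation into $\Z[x]$) to accommodate arbitrary integer elements. A purely combinatorial alternative, comparing the largest elements of $A\oplus B$ and $A\oplus C$ and inducting on $|B|$, is also available, but the generating-function argument is cleaner and makes the cancellation transparent.
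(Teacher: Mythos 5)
Your proof is correct, but it takes a genuinely different route from the paper's. The paper argues combinatorially: it orders the elements of $B$ and $C$, assumes $b_1 = c_1, \ldots, b_p = c_p$ but $b_{p+1} \neq c_{p+1}$, and compares the minimal elements of $A \oplus B \setminus \{a_1 + b_1, \ldots, a_n + b_p\}$ and $A \oplus C \setminus \{a_1 + c_1, \ldots, a_n + c_p\}$; directness of the sums forces these minima to be $a_1 + b_{p+1}$ and $a_1 + c_{p+1}$ respectively, giving a contradiction (with a separate remark for the case $p = k$ or $p = m$, i.e.\ one set an initial segment of the other). Your generating-function argument replaces this case analysis by a single cancellation in the integral domain $\Z[x, x^{-1}]$, and the key point --- that directness of $\oplus$ is exactly what makes $f_A f_B = f_{A \oplus B}$ with $\{0,1\}$ coefficients --- is correctly identified; your handling of possibly negative elements via the Laurent ring (or a translation) is also sound, and in fact slightly more general than needed, since the paper applies the lemma only to sets of nonnegative integers. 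Amusingly, your dictionary is precisely the one the paper itself introduces a few lines after this lemma (sets $\{a_1, \ldots, a_n\} \leftrightarrow z^{a_1} + \cdots + z^{a_n}$, direct sum $\leftrightarrow$ product) to classify which arithmetic progressions sum to a segment; so your proof unifies the lemma with the paper's subsequent machinery, at the cost of invoking a little algebra (integral domains, cancellation) where the paper's argument is entirely elementary and self-contained.
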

\begin{proof}
Let 

$$A = \{a_1, \ldots, a_n\}, \quad a_1 < \ldots < a_n,$$
$$B = \{b_1, \ldots, b_k\}, \quad b_1 < \ldots < b_k,$$
$$C = \{c_1, \ldots, c_m\}, \quad c_1 < \ldots < c_m.$$ 
Assume the converse, let 
$$b_1 = c_1, \ldots, b_p = c_p, b_{p + 1} \ne c_{p + 1}, \quad p < k, p < m.$$   
Since the sum of the sets is direct, the minimal element in $A \oplus B \setminus \{a_1 + b_1, \ldots, a_n + b_p\}$ is equal to $a_1 + b_{p + 1}$, in $A \oplus C\setminus \{a_1 + c_1, \ldots, a_n + c_p\}$ is equal to $a_1 + c_{p + 1}$. These sets are equal, thus, $b_{p + 1} = c_{p + 1}$; which is a contradiction. 

Similarly, we consider the case when $B \subsetneq C$, i.e. $p = k$ or $p = m$.

Lemma is proved.  
\end{proof}

\begin{theorem} \label{th6}
Suppose $Y$ is an attractor in $\Z$ with the lengths of segments (consecutive integer numbers in $Y$) equal to $s$. Then $Y$ is a sum of several arithmetic progressions. one of which is the progression $\{0, \ldots, s - 1\}$. 
\end{theorem}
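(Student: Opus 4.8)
The plan is to argue by strong induction on the length $N$ of the integer segment $\{0,1,\ldots,N-1\}$ that $Y$ tiles together with its shift set $L$ (which, as established in the reduction above, we may take with $0 \in L$ and all shifts nonnegative). The base case is a single block, where $Y=\{0,\ldots,s-1\}$ is itself the required arithmetic progression. For the inductive step I would split into two cases according to whether the first block of $Y$ is a single point or not.

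First suppose $s\geq 2$. Grouping $Y$ into its blocks of length $s$ gives a direct decomposition $Y=\{0,\ldots,s-1\}\oplus B$, where $B$ is the set of block beginnings. Since $Y\oplus L=\{0,\ldots,N-1\}$ is a direct sum, so is $\{0,\ldots,s-1\}\oplus(B\oplus L)$, and because $\{0,\ldots,s-1\}$ fills each residue interval exactly once, $B\oplus L$ must equal the progression $\{0,s,2s,\ldots,N-s\}=s\cdot\{0,\ldots,N/s-1\}$. As $0\in B$ and $0\in L$, this forces both $B\subseteq s\Z$ and $L\subseteq s\Z$. Dividing by $s$, the sets $\tilde B=B/s$ and $\tilde L=L/s$ satisfy $\tilde B\oplus\tilde L=\{0,\ldots,N/s-1\}$, so $\tilde B$ is again an attractor in $\Z$ tiling a shorter segment. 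By the induction hypothesis $\tilde B$ is a sum of arithmetic progressions; multiplying them by $s$ shows $B=s\tilde B$ is as well, and prepending $\{0,\ldots,s-1\}$ yields the desired decomposition of $Y$ with $\{0,\ldots,s-1\}$ as one of the progressions.

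The second case is $s=1$, where the blocks of $Y$ are single points. Here I would invoke Theorem \ref{doubleind}: the associated unit-segment attractor is admissible, and since $Y$ has at least two points its shift set $L$ is an $l$-set with $l\geq 2$, while every element of $Y$ is divisible by $l$. Writing $L=\{0,\ldots,l-1\}\oplus C$ with $C\subseteq l\Z$ and $Y=l\tilde Y$, a residue-mod-$l$ bookkeeping applied to $Y\oplus L=\{0,\ldots,N-1\}$ collapses to $\tilde Y\oplus(C/l)=\{0,\ldots,N/l-1\}$, so $\tilde Y$ is an attractor in $\Z$ on a shorter segment. By induction $\tilde Y$ is a sum of arithmetic progressions, hence so is $Y=l\tilde Y$; the required progression $\{0,\ldots,s-1\}=\{0\}$ is trivial in this case.

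The delicate points will be, first, verifying that the divisibility really holds in the $s=1$ case — this is exactly the content of Theorem \ref{doubleind} and is the crux of the whole argument, since the naive block-factorization does not shrink $N$ when the blocks are singletons — and second, checking that every $\oplus$ remains a genuine direct sum after passing to the quotient by $s$ or by $l$. Injectivity of scaling handles the latter, and the cancellation property $A\oplus B=A\oplus C\Rightarrow B=C$ proved above guarantees that the quotient shift set is well defined. Since each case reduces $N$ by a factor of at least two, the induction terminates and produces the claimed representation of $Y$ as a sum of arithmetic progressions containing $\{0,\ldots,s-1\}$.
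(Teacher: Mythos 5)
Your argument is correct, but it takes a genuinely different route from the paper's. The paper proves Theorem \ref{th6} by induction on $h$, the minimal number of shifts needed to fill the segment, and for $s \neq 1$ it writes $Y = Y_0 \oplus \{0,\ldots,s-1\}$, regards $Y_0$ as an attractor with the \emph{enlarged} shift set $L_0 = \{0,\ldots,s-1\}\oplus L$, uses only statement 2) of Theorem \ref{doubleind} (that $L_0$ is an $l_0$-set, so $L_0 = \{0,\ldots,l_0-1\}\oplus L_1$), and then goes \emph{up} in scale: it merges $Y_1 = Y_0 \oplus \{0,\ldots,l_0-1\}$, which is an attractor with strictly fewer shifts because $l_0 > s$, applies the induction hypothesis to $Y_1$, and recovers $Y_0$ via the cancellation lemma $A \oplus B = A \oplus C \Rightarrow B = C$. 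You instead induct on the length $N$ of the tiled segment and go \emph{down} by division: in the $s \ge 2$ case the forced identity $B \oplus L = \{0, s, 2s, \ldots, N-s\}$ together with $0 \in B$ and $0 \in L$ pins down $B, L \subseteq s\Z$, and you divide by $s$ --- this half is completely elementary and bypasses Theorem \ref{doubleind} altogether; in the $s = 1$ case you use \emph{both} statements of Theorem \ref{doubleind}, namely the divisibility of all elements of $Y$ by $l$ (statement 1, which the paper's proof of this theorem never invokes) and the $l$-set structure of $L$ (statement 2), to divide by $l$. Each step shrinks $N$ by a factor of at least two, so the recursion terminates. What each approach buys: your descent renders the cancellation lemma superfluous (your citation of it is not actually needed, since $B \oplus L$ and the divisibility are derived directly and scaling is injective), avoids the paper's counting step $|L_1| = s|L|/l_0 < |L|$, and makes transparent the nested structure $a_{k+1} = d_1\cdots d_k$ that the paper later extracts via the polynomial factorization of Lemma \ref{lem6} and Theorem \ref{mainoned}; the paper's merge-and-cancel argument, in exchange, stays at a single scale and consumes less of Theorem \ref{doubleind}. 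Two small points to patch in a final write-up: fold the singleton $Y = \{0\}$ (where admissibility, hence Theorem \ref{doubleind}, does not apply) into your single-block base case, as you essentially do; and when applying the induction hypothesis to $\tilde B = B/s$, note that its equal-segment-length hypothesis holds automatically, since consecutive elements of $B$ differ by multiples of $s$ that exceed $s$, hence by at least $2s$, so all blocks of $\tilde B$ are singletons.
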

\begin{proof}
We prove this theorem by induction on $h$, the minimal number of shifts $Y$ needed to be done to entirely fill the segment of integer numbers. 
\textsl{Base}: if $h = 0$, then we already have the segment that has the length $s$, and it is an arithmetic progression of the required form. 

\textsl{Step}: suppose the statement is proved for all $h_1 < h$, now we show it for $h$. 

If the length of the segments is $s \neq 1$, then we present $Y$ in the form $Y_0 \oplus \{0, \ldots, s - 1\}$, where  the length of all segments in $Y_0$ is equal to one (we can choose as $Y_0$ the beginnings of the segments in $Y$). This presentation as a direct sum is well defined since the distances between the elements of $Y_0$ are at least $s$, and if $y_1, y_2 \in Y_0, y_1 < y_2$, $s_1, s_2 \in \{0, \ldots, s - 1\}$, then $y_1 + s_1 < y_2 + s_2$. $Y_0$ is also an attractor, let $l_0$ be the distance between the first two of its points. It remains to prove that $Y_0$ is a sum of arithmetic progressions. Let us apply Theorem \ref{doubleind} for $Y_0$ with a very large $k$ (for example, larger than the value of a maximal shift). So we get that the set of all shifts $L_0$ is an $l$-set, where $l_0$ is in the role of $l$. From the definition of $l$-set it follows that $L_0 = \{0, 1 \ldots l_0 - 1\} \oplus L_1$, where $L_1$ is the set with the beginnings of segments in $L_0$. Since $L_0$ is the set of shifts, $Y_0 \oplus L_0$ is the segment of integer numbers. We obtain that $Y_0 \oplus \{0, 1 \ldots l_0 - 1\} \oplus L_1$ is the segment of integer numbers. 
 
 Consider $Y_1 = Y_0 \oplus \{0, 1 \ldots l_0 - 1\}$, it is a well-defined set consisting of several segments of integer numbers. It is an attractor, since $Y_1 \oplus L_1$ is a segment of integer numbers, so we can tile the segment of integer numbers by shifting $Y_1$ with the set $L_1$. 
Since all distances between the points in $Y_0$ are multiple of $l_0$, the length of the first segment in $Y_1$ is divisible by $l_0$ (the first segment in $Y_1$ consists of several segments having the length $l_0$), let this length be equal to $p \cdot l_0$.  $L_1$ has $l_0$ times less elements than $L_0$, and $L_0$ has $s$ times elements more than the set of shifts corresponding to $Y$. Since $l_0$ as a distance between first points of $Y_0$ is more than $s$, $L_1$ has less than $h$ elements, therefore we can apply the induction hypothesis for the attractor $Y_1$. Then $Y_1$ is a sum of arithmetic progressions with one of them equal to $\{0, 1, \ldots, p \cdot l_0 - 1\}$. 

Notice that 
$$\{0, 1, \ldots, p \cdot l_0 - 1\} = \{0, 1, \ldots, l_0 - 1\} \oplus \{0, l_0, 2 \cdot l_0, \ldots, (p - 1) \cdot l_0\}.$$
We obtain that $Y_1$ is a sum of arithmetic progressions with one of them equal to $\{0, 1, \ldots, l_0 - 1\}$. 
By construction, $Y_1 = Y_0 \oplus \{0, 1 \ldots l_0 - 1\}$, hence, the lemma on direct sums of set implies that $Y_0$ is a sum of arithmetic progressions, this completes the proof.

Theorem \ref{th6} is proved. 
\end{proof}

Now we specify which arithmetic progressions can be summed up to get attractors. For these purposes we reformulate the problem in the terms of polynomials. Instead of the set of integer numbers $A = \{a_1, \ldots, a_n\}$ we consider the polynomial $(z^{a_1} + \ldots + z^{a_n})$. Then the direct sum of sets corresponds to the product of these polynomials, arithmetic progressions correspond to geometric progressions. 

If $Y$ is an attractor, and $L$ is a set of shifts, then $Y \oplus L$ is a segment of integer numbers. We can also consider $L$ as an attractor, and $Y$ as a set of shifts. Thus, $L$ is also a sum of arithmetic progressions. The total sum of arithmetic progressions is a segment of integer numbers. Our problem is equivalent to the question for which $a_1, d_1 \ldots, a_n, d_n$ there is $N$ such that  

$$1 + z + \cdots + z^{N} = (1 + z^{a_1} + \cdots + z^{a_1 \cdot (d_1 - 1)}) \cdots (1 + z^{a_n} + \cdots + z^{a_n \cdot (d_n - 1)}).$$
After that we can consider the sum of an arbitrary subset of these arithmetic progressions as an attractor, and the sum of the remaining progressions as the set of shifts.  
 
Denote $P_d(z) = 1 + z + \cdots + z^{d - 1}$. 

\begin{lemma} \label{lem6}
In the notation above it holds that  
$$P_{d_1 \cdots d_n}(z) = P_{d_1}(z) \cdot P_{d_2}(z^{d_1}) \cdot P_{d_3}(z^{d_1 \cdot d_2}) \cdots  P_{d_n}(z^{d_1\cdots d_{n - 1}}).$$
\end{lemma}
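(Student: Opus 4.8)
The plan is to prove the identity by telescoping after rewriting each factor as a geometric sum in closed form. First I would record the elementary observation that, as polynomials, $(z-1)P_d(z) = z^d - 1$, so that $P_d(z) = \frac{z^d-1}{z-1}$ holds as an identity of rational functions (equivalently, it holds at every $z \neq 1$, and two polynomials agreeing at infinitely many points coincide).

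Next, for each $k$ I would substitute $z \mapsto z^{d_1 \cdots d_{k-1}}$ (with the empty product read as $1$ for $k=1$) into this closed form, obtaining
$$P_{d_k}\!\left(z^{d_1\cdots d_{k-1}}\right) = \frac{z^{d_1\cdots d_k}-1}{z^{d_1\cdots d_{k-1}}-1}.$$
Multiplying these expressions for $k = 1, \ldots, n$, the numerator of the $k$-th factor cancels the denominator of the $(k+1)$-st, so the product telescopes to $\frac{z^{d_1\cdots d_n}-1}{z-1}$, which is exactly $P_{d_1\cdots d_n}(z)$.

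An alternative I would keep in reserve is induction on $n$. The base case $n=1$ is trivial, and the inductive step reduces to the two-factor identity $P_{ab}(z) = P_a(z)\,P_b(z^a)$ with $a = d_1\cdots d_{n-1}$ and $b = d_n$, after which the induction hypothesis rewrites $P_a(z)$. The two-factor identity in turn has a transparent combinatorial reading: grouping the monomials of $1 + z + \cdots + z^{ab-1}$ into $b$ blocks of length $a$ corresponds to the unique mixed-radix representation of each exponent $j \in \{0, \ldots, ab-1\}$ as $j = qa + r$ with $0 \le r < a$ and $0 \le q < b$. This is precisely the set-level statement that $\{0, \ldots, ab-1\} = \{0,\ldots,a-1\} \oplus \{0, a, \ldots, (b-1)a\}$, which dovetails with the direct-sum language used throughout this section.

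I do not expect a genuine obstacle here, since the statement is a formal polynomial identity. The only point requiring mild care is the passage through the rational function $\frac{z^d-1}{z-1}$: one should either note that all cancellations take place among polynomials (each $z^m-1$ is divisible by $z-1$), or justify the polynomial equality from agreement at infinitely many values of $z$. Choosing the combinatorial block-grouping argument instead avoids denominators entirely, at the cost of slightly more index bookkeeping.
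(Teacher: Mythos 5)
Your proposal is correct, and your primary argument differs genuinely from the paper's. The paper proves the lemma by induction on the number of factors: the case of two factors is checked by direct expansion, and the inductive step peels off the \emph{first} factor via $P_{d_1 \cdots d_{k+1}}(z) = P_{d_1}(z) \cdot P_{d_2 \cdots d_{k+1}}(z^{d_1})$ and then applies the hypothesis with $z$ replaced by $z^{d_1}$. Your telescoping argument instead writes each factor in the closed form $P_{d_k}\left(z^{d_1\cdots d_{k-1}}\right) = \left(z^{d_1\cdots d_k}-1\right)/\left(z^{d_1\cdots d_{k-1}}-1\right)$ and cancels adjacent numerators and denominators in one stroke; this is shorter and makes the structure of the identity completely transparent, at the small cost of passing through rational functions --- a point you handle correctly by noting that each $z^m - 1$ is divisible by $z - 1$ (equivalently, that polynomials agreeing at infinitely many points coincide). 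Your reserve induction is essentially the paper's proof, differing only in peeling off the last factor rather than the first, and your combinatorial reading of the two-factor identity as $\{0, \ldots, ab-1\} = \{0,\ldots,a-1\} \oplus \{0, a, \ldots, (b-1)a\}$ is a nice bonus: it ties the polynomial identity back to the direct-sum formalism that this section of the paper uses to translate between attractors and polynomials, which the paper itself states but does not spell out at this point.
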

\begin{proof}
We prove by induction on $k$ that 
$$P_{d_1 \cdots d_k}(z) = P_{d_1}(z) \cdot P_{d_2}(z^{d_1}) \cdots P_{d_k}(z^{d_1\cdots d_{k - 1}}).$$
\textsl{Base}: in case $k = 1$ there is nothing to prove; $k = 2$ follows from the simple check  
$$(1 + z + \ldots + z^{d_1 \cdot d_2 - 1}) =  (1 + z + \ldots + z^{d_1 - 1}) \cdot (1 + z^{d_1} + \ldots + z^{d_1 \cdot (d_2 - 1)}).$$
\textsl{Step}: suppose the statement is established for $k > 1$, let us show it for $k + 1$: 
\begin{gather*}
P_{d_1 \cdots d_{k + 1}}(z) = P_{d_1}(z) \cdot P_{d_2 \cdots d_{k + 1}}(z^{d_1}) = P_{d_1}(z) \cdot P_{d_2}({z^{d_1}}^{1}) \cdots P_{d_{k + 1}}({z^{d_1}}^{d_2 \cdots d_{k}})\\
= P_{d_1}(z) \cdot P_{d_2}(z^{d_1}) \cdots P_{d_{k + 1}}(z^{d_1\cdots d_{k}}).
\end{gather*}
$$P_{d_1 \cdots d_{k + 1}}(z) = P_{d_1}(z) \cdot P_{d_2 \cdots d_{k + 1}}(z^{d_1}) 
= P_{d_1}(z) \cdot P_{d_2}(z^{d_1}) \cdots P_{d_{k + 1}}(z^{d_1\cdots d_{k}}).$$

Lemma is proved. 
\end{proof}

\begin{theorem} Suppose $a_1 \le \cdots \le a_n$. Then 
$$1 + z + \cdots + z^{N} = (1 + z^{a_1} + \cdots + z^{a_1 \cdot (d_1 - 1)}) \cdots (1 + z^{a_n} + \cdots + z^{a_n \cdot (d_n - 1)})$$ 
if and only if $a_1 = 1$, $a_k = d_1 \cdots d_{k - 1}$ for every $k$, $2 \le k \le n$, $N = d_1 \cdots d_n$. 
\end{theorem}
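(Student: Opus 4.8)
The plan is to read the claimed identity as a statement about tilings of an integer segment, in the spirit of the reduction already carried out in this section. Writing $A_k = \{0, a_k, 2a_k, \ldots, (d_k-1)a_k\}$ for the arithmetic progression encoded by the $k$-th factor $P_{d_k}(z^{a_k}) = 1 + z^{a_k} + \cdots + z^{a_k(d_k-1)}$, the identity $\prod_k P_{d_k}(z^{a_k}) = 1 + z + \cdots + z^N$ is equivalent to the direct-sum equality $A_1 \oplus A_2 \oplus \cdots \oplus A_n = \{0, 1, \ldots, N\}$. This equivalence is harmless because all coefficients on both sides are nonnegative, so no cancellation can occur: a monomial $z^e$ has coefficient equal to the number of representations $e = \sum_k i_k a_k$ with $0 \le i_k \le d_k - 1$. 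Throughout I may assume $d_k \ge 2$, since a factor with $d_k = 1$ equals $1$ and can be deleted. The \emph{if} direction is then immediate: substituting $a_1 = 1$ and $a_k = d_1 \cdots d_{k-1}$ into Lemma \ref{lem6} gives $\prod_k P_{d_k}(z^{d_1\cdots d_{k-1}}) = P_{d_1\cdots d_n}(z)$, and comparing with $1 + z + \cdots + z^N$ fixes the value of $N$.

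For the \emph{only if} direction I would argue by induction on the number of factors $n$. In the base case $n = 1$ the product is $1 + z^{a_1} + \cdots + z^{a_1(d_1-1)}$, which has no gap only when $a_1 = 1$, giving $N = d_1 - 1$. For the inductive step, first locate $a_1$: the coefficient of $z^1$ in the product equals the number of indices $k$ with $a_k = 1$, while on the right-hand side this coefficient is $1$ (note $N \ge 1$ once $d_k \ge 2$). Hence the smallest step is $1$, so $a_1 = 1$ and $A_1 = \{0, 1, \ldots, d_1 - 1\}$ is a full interval.

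Next comes the key rigidity step. Since $A_1$ is the interval $\{0,\ldots,d_1-1\}$ and the whole sum is the interval $\{0, \ldots, N\}$, the complementary sumset $B := A_2 \oplus \cdots \oplus A_n$ must be exactly $\{0, d_1, 2d_1, \ldots, (t-1)d_1\}$, where $t = d_2 \cdots d_n$. Indeed, ordering the elements of $B$ as $0 = b_1 < b_2 < \cdots$, directness forbids $b_2 < d_1$ (such a value would be represented both as $b_2 + 0$ and as $0 + b_2$), while coverage of the point $d_1$ forbids $b_2 > d_1$; iterating gives $b_j = (j-1)d_1$, whence $N + 1 = d_1 t$. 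Because each $A_k$ with $k \ge 2$ is contained in $B$ (take $0$ in the remaining summands), every element of $A_k$, and in particular every step $a_k$, is divisible by $d_1$. Writing $a_k = d_1 a_k'$ and dividing the whole configuration by $d_1$ yields $A_2' \oplus \cdots \oplus A_n' = \{0, 1, \ldots, t-1\}$, an identity of the same shape with $n-1$ factors, with steps $a_2' \le \cdots \le a_n'$ (the sorted order is preserved under division) and lengths $d_2, \ldots, d_n$. The induction hypothesis then gives $a_2' = 1$ and $a_k' = d_2 \cdots d_{k-1}$, and rescaling back produces $a_2 = d_1$, $a_k = d_1 \cdots d_{k-1}$ and $N+1 = d_1 \cdots d_n$, as required.

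The main obstacle is precisely this rigidity step: showing that tiling $\{0,\ldots,N\}$ by translates of the block $\{0,\ldots,d_1-1\}$ forces the translation set to be the progression $d_1\{0,\ldots,t-1\}$, and hence that every remaining step $a_k$ is a multiple of $d_1$. Everything else---the identification $a_1 = 1$, the rescaling, and the fact that the ordering survives division by $d_1$---is routine bookkeeping, and the translation between the polynomial and set formulations is legitimate exactly because no cancellation among nonnegative coefficients is possible. I would also take care to verify the base case and to dispose of trivial factors $d_k = 1$ at the outset, so that the indexing in the final conclusion is consistent.
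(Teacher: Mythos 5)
Your proof is correct, but it takes a genuinely different route from the paper's. The paper stays entirely on the polynomial side and inducts on the index $k$ of the factor being identified: assuming $a_i = d_1\cdots d_{i-1}$ for all $i \le k$, it merges the first $k$ factors into $P_{d_1\cdots d_k}(z)$ via Lemma \ref{lem6} and then pins down $a_{k+1}$ by a two-sided coefficient comparison --- the monomial $z^{d_1\cdots d_k}$ on the left forces $a_{k+1}\le d_1\cdots d_k$, while the uniqueness of the representations of $z^s$ for $s < d_1\cdots d_k$ (all already produced by the first bracket) forces $a_{k+1}\ge d_1\cdots d_k$. You instead induct on the number of factors $n$ and pass to the set formulation: after extracting $a_1=1$ from the coefficient of $z$, your rigidity lemma shows that the complementary summand $B = A_2\oplus\cdots\oplus A_n$ in a tiling of $\{0,\ldots,N\}$ by the interval $\{0,\ldots,d_1-1\}$ must be exactly $d_1\{0,\ldots,t-1\}$, whence $d_1 \mid a_k$ for $k\ge 2$ (since $A_k\subset B$), and dividing everything by $d_1$ produces an instance of the same problem with $n-1$ factors. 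Your quotient step is close in spirit to the de Bruijn-style decomposition underlying this whole section, and to the rigidity arguments in Theorem \ref{doubleind}; it buys a stronger intermediate structural fact (the translate set is an exact arithmetic progression), while the paper's argument avoids set translation and any separate rigidity lemma at the cost of the slightly fussier two-sided degree comparison. Two small points in your favor: your count $N+1 = d_1\cdots d_n$ (immediate from substituting $z=1$) is the correct one and quietly repairs an off-by-one in the paper's statement, whose final display switches to $P_N(z) = 1 + \cdots + z^{N-1}$; and your explicit removal of trivial factors with $d_k=1$ is a hypothesis the paper leaves implicit but which is needed for the ``only if'' direction to hold literally, since such a factor equals $1$ and leaves its $a_k$ unconstrained.
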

\begin{proof}
The sufficiency follows from Lemma \ref{lem6}. 

We prove the necessity by induction. The equality $a_1 = 1$ follows from the fact that since there is the term $z$ in the left hand side, it is also on the right, and that is impossible if all incoming degrees of $z$ are more than one. Suppose that we already have shown that $a_1 = 1$, $a_i = d_1 \cdots d_{i - 1}$ for every $k$, $\forall 2 \le i \le k$; let us prove it for $k + 1$. By the induction hypothesis 

\begin{gather*}
1 + z + \cdots + z^N = P_{d_1}(z) \cdots P_{d_k}(z^{d_1 \cdots d_{k - 1}}) \\
\times  (1 + z^{a_{k + 1}} + \cdots + z^{a_{k + 1} \cdot (d_{k + 1} - 1)}) \cdots (1 + z^{a_n} + \cdots + z^{a_n \cdot (d_n - 1)}) \\
= P_{d_1 \cdots d_k}(z) \cdot (1 + z^{a_{k + 1}} + \cdots + z^{a_{k + 1} \cdot (d_{k + 1} - 1)}) \cdots (1 + z^{a_n} + \cdots + z^{a_n \cdot (d_n - 1)}).
\end{gather*}
 
If $k \ne n$, i.e. the decomposition is not completed, then in the left hand side there is a term $z^{d_1 \cdots d_k}$. It cannot appear on the right, if from all multipliers except the first one choose the terms equal to one. Thus, the final degree of $z$ will be at least $a_{k + 1}$, then $a_{k + 1} \le d_1 \cdots d_k$. Along with it, in the sum in the right hand side all terms of the form $z^s$, where $s < d_1 \cdots d_k$, are obtained as a multiplication of the first bracket and of several ones. Therefore, they should not be obtained in the right hand side in a different way, so $a_{k + 1} \ge d_1 \cdots d_k$. 

We obtain that $a_{k + 1} = d_1 \cdots d_k$, as required. The formula for $N$ follows then from Lemma \ref{lem6}, thus,  

$$P_N(z) = 1 + z + \cdots + z^{N - 1} = P_{d_1}(z) \cdots P_{d_n}(z^{d_1 \cdots d_{n - 1}}) = P_{d_1 \cdots d_n}(z).$$
Theorem is proved.  
\end{proof}

Thus, we proved the fundamental theorem on the classification of attractors in $\Z$. 

\begin{theorem} \label{mainoned}
An arbitrary one-dimensional attractor consisting of nonnegative integer numbers with the beginning in zero is the sum of a subset of arithmetic progressions from the set 
$$\{0, a_1, \ldots, a_1 \cdot (d_1 - 1)\}, \ldots, \{0, a_n, \ldots, a_n \cdot (d_n - 1)\},$$ 
where $a_1 = 1, \ldots$, $a_k = d_1 \cdots d_{k - 1}$ for every $k$, $2 \le k \le n$, $d_i$ are arbitrary integer numbers $\ge 2$. The lengths of the segments of attractor are one if and only if we do not choose the arithmetic progression $\{0, a_1, \ldots, a_1 \cdot (d_1 - 1)\}$. 
\end{theorem}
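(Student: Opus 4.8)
The plan is to obtain Theorem~\ref{mainoned} as a synthesis of Theorem~\ref{th6} and the factorization characterization just proved, exploiting the symmetry between an attractor and its set of shifts. First I would fix an arbitrary integer attractor $Y$ beginning at $0$ together with a set of shifts $L \ni 0$ for which $Y \oplus L$ is a segment of integer numbers; such an $L$ exists by the reduction described just before the statement. Encoding a set $A$ by its generating polynomial $A(z) = \sum_{a \in A} z^{a}$, the tiling relation becomes $Y(z)\,L(z) = 1 + z + \cdots + z^{N}$ for a suitable $N$, since a direct sum of sets corresponds to a product of polynomials.

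The key observation is that $L$ is itself an attractor, with $Y$ now playing the role of its shift set, because $Y \oplus L = L \oplus Y$ is the same segment. Consequently Theorem~\ref{th6} applies to each of $Y$ and $L$, so both are sums of arithmetic progressions; equivalently, each of $Y(z)$ and $L(z)$ is a product of geometric-progression polynomials of the form $1 + z^{a} + \cdots + z^{a(d-1)}$. Concatenating the two factorizations exhibits $1 + z + \cdots + z^{N}$ as a single such product. Reindexing the factors so that the steps satisfy $a_1 \le \cdots \le a_n$ and invoking the previous theorem, I would conclude that necessarily $a_1 = 1$, $a_k = d_1 \cdots d_{k-1}$ for $2 \le k \le n$, and $N = d_1 \cdots d_n$. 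Since $Y(z)$ is the product of exactly those factors coming from $Y$, the set $Y$ is the sum of the corresponding subset of the canonical progressions $\{0, a_k, \ldots, a_k(d_k - 1)\}$, which is the asserted description.

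It then remains to check the criterion on segment lengths, which I would settle by a divisibility argument. If the first progression $\{0, 1, \ldots, d_1 - 1\}$ (recall $a_1 = 1$) is among those chosen, then both $0$ and $1$ lie in $Y$, so $Y$ contains a block of at least two consecutive integers and the segment length exceeds one. Conversely, if this progression is omitted, then every chosen step $a_k = d_1 \cdots d_{k-1}$ with $k \ge 2$ is divisible by $d_1 \ge 2$, whence every element of $Y$ is a multiple of $d_1$; no two consecutive integers can then both belong to $Y$, so all segments have length one. This establishes the biconditional.

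The \emph{main obstacle} is conceptual rather than computational: Theorem~\ref{th6} alone only guarantees that $Y$ is \emph{some} sum of arithmetic progressions, and gives no access to the rigid relations $a_k = d_1 \cdots d_{k-1}$. These relations are not intrinsic to $Y$ in isolation; they emerge only after adjoining the shift set $L$ and applying the characterization theorem to the full product $1 + z + \cdots + z^{N}$. The crux of the argument is therefore to use the symmetry that turns $L$ into an attractor and to recognize the concatenated factorization as one to which the previous theorem applies; once this is in place, the subset description and the length criterion follow immediately.
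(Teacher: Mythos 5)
Your proposal is correct and follows essentially the same route as the paper, which obtains Theorem~\ref{mainoned} exactly as you do: by the symmetry that makes the shift set $L$ an attractor with shift set $Y$, applying Theorem~\ref{th6} to both, encoding the tiling $Y \oplus L$ as the polynomial identity $Y(z)L(z) = 1 + z + \cdots + z^N$, and invoking the factorization theorem to force $a_1 = 1$ and $a_k = d_1 \cdots d_{k-1}$. Your explicit divisibility argument for the segment-length biconditional (every element of $Y$ is a multiple of $d_1 \ge 2$ when the first progression is omitted) is a small but sound addition to a step the paper leaves implicit.
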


\section{Examples of two-dimensional attractors}
\label{examptwo}
In Section \ref{lineres}
we found out the structure of one-dimensional attractors consisting of finitely many segments. Using them it is easy to construct two-dimensional attractors as tensor products of one-dimensional attractors. 

\begin{example}\label{twodimill}
The example of two-dimensional disconnected attractor which was obtained as tensor product of disconnected one-dimensional attractors is given in Fig. \ref{twodim2}. Fig. \ref{twodim} shows the tiling of the plane with this attractor. Each shift is shown with its own colour. 

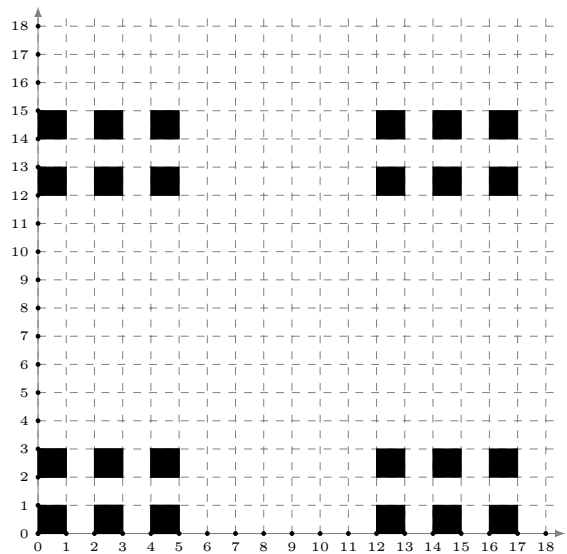
\begin{figure}[ht]
  \centering
  \begin{tikzpicture} [scale = 0.75]
    
    \pgftransformcm{0.5}{0}{0}{0.5}{\pgfpoint{0cm}{0cm}}
    \coordinate (XAxisMin) at (-11, 0);
    \coordinate (XAxisMax) at (7.7, 0);

    \coordinate (YAxisMin) at (-11, 0);
    \coordinate (YAxisMax) at (-11, 18.7);
    \draw [thin, gray,-latex] (XAxisMin) -- (XAxisMax);
	\draw [thin, gray,-latex] (YAxisMin) -- (YAxisMax);

    \draw[style=help lines,dashed] (-11, 0) grid[step=1cm] (7.5, 18.5);
\foreach \x in {-11,-10,...,7}{

\node[draw,circle,inner sep=0.5pt,fill] at (\x, 0) {};
\node[draw,circle,inner sep=0.5pt,fill] at (-11, \x + 11) {};
}
\foreach \x in {0,1,...,18}{
	\node at (\x - 11, 0) [below] {\tiny{\x}};

	\node at (-11, \x) [left] {\tiny{\x}};
    }
\colorlet{c00}{black};

\foreach \i in {0}{
\foreach \j in {0}{
\draw[c\i\j, fill] (-11 + \i, \j) -- (-11 + \i, 1 + \j) --(-10 + \i, 1 + \j) --(-10 + \i, 0 + \j) -- cycle;
\draw[c\i\j, fill] (-11 + \i, 2+\j) -- (-11 + \i, 3 + \j) --(-10 + \i, 3 + \j) --(-10 + \i, 2 + \j) -- cycle;
\draw[c\i\j, fill] (-9 + \i, \j) -- (-9 + \i, 1 + \j) --(-8 + \i, 1 + \j) --(-8 + \i, 0 + \j) -- cycle;
\draw[c\i\j, fill] (-9 + \i, 2+\j) -- (-9 + \i, 3 + \j) --(-8 + \i, 3 + \j) --(-8 + \i, 2 + \j) -- cycle;
\draw[c\i\j, fill] (-7 + \i, \j) -- (-7 + \i, 1 + \j) --(-6 + \i, 1 + \j) --(-6 + \i, 0 + \j) -- cycle;
\draw[c\i\j, fill] (-7 + \i, 2+\j) -- (-7+ \i, 3 + \j) --(-6 + \i, 3 + \j) --(-6 + \i, 2 + \j) -- cycle;

\draw[c\i\j, fill] (1 + \i, \j) -- (1 + \i, 1 + \j) --(2 + \i, 1 + \j) --(2 + \i, 0 + \j) -- cycle;
\draw[c\i\j, fill] (1 + \i, 2+\j) -- (1 + \i, 3 + \j) --(2 + \i, 3 + \j) --(2 + \i, 2 + \j) -- cycle;
\draw[c\i\j, fill] (3 + \i, \j) -- (3 + \i, 1 + \j) --(4 + \i, 1 + \j) --(4 + \i, 0 + \j) -- cycle;
\draw[c\i\j, fill] (3 + \i, 2+\j) -- (3 + \i, 3 + \j) --(4 + \i, 3 + \j) --(4 + \i, 2 + \j) -- cycle;
\draw[c\i\j, fill] (5 + \i, \j) -- (5 + \i, 1 + \j) --(6 + \i, 1 + \j) --(6 + \i, 0 + \j) -- cycle;
\draw[c\i\j, fill] (5 + \i, 2+\j) -- (5+ \i, 3 + \j) --(6 + \i, 3 + \j) --(6 + \i, 2 + \j) -- cycle;

\foreach \k in {-11, -9, -7, 1, 3, 5}{
\draw[c\i\j, fill] (\k + \i, 12 + \j) -- (\k + \i, 13 + \j) --(\k + 1 + \i, 13 + \j) --(\k + 1 + \i, 12 + \j) -- cycle;
\draw[c\i\j, fill] (\k + \i, 14+\j) -- (\k + \i, 15 + \j) --(\k + 1 + \i, 15 + \j) --(\k + 1 + \i, 14 + \j) -- cycle;
}
}}
\draw[black, fill] (-11, 2) -- (-11, 3) --(-10, 3) --(-10, 2) -- cycle;
\draw[black, fill] (-9, 0) -- (-9, 1) --(-8, 1) --(-8, 0) -- cycle;
\draw[black, fill] (-9, 2) -- (-9, 3) --(-8, 3) --(-8, 2) -- cycle;
\draw[black, fill] (-7, 0) -- (-7, 1) --(-6, 1) --(-6, 0) -- cycle;
\draw[black, fill] (-7, 2) -- (-7, 3) --(-6, 3) --(-6, 2) -- cycle;
    \end{tikzpicture}
  \caption{Two-dimensional disconnected attractor.}
\label{twodim2}
\end{figure}

\begin{figure}[ht]
  \centering
  \begin{tikzpicture} [scale = 0.75]
    
    \pgftransformcm{0.5}{0}{0}{0.5}{\pgfpoint{0cm}{0cm}}
    \coordinate (XAxisMin) at (-11, 0);
    \coordinate (XAxisMax) at (13.7, 0);

    \coordinate (YAxisMin) at (-11, 0);
    \coordinate (YAxisMax) at (-11, 24.7);
    \draw [thin, gray,-latex] (XAxisMin) -- (XAxisMax);
	\draw [thin, gray,-latex] (YAxisMin) -- (YAxisMax);

    \draw[style=help lines,dashed] (-11, 0) grid[step=1cm] (13.5, 24.5);
\foreach \x in {-11,-10,...,13}{

\node[draw,circle,inner sep=0.5pt,fill] at (\x, 0) {};
\node[draw,circle,inner sep=0.5pt,fill] at (-11, \x + 11) {};
}
\foreach \x in {0,1,...,24}{
	\node at (\x - 11, 0) [below] {\tiny{\x}};

	\node at (-11, \x) [left] {\tiny{\x}};
    }
\colorlet{c00}{red!80!white};
\colorlet{c01}{yellow!30!pink};
\colorlet{c10}{yellow!60!white};
\colorlet{c11}{orange!90!black};
\colorlet{c60}{violet!70!blue};
\colorlet{c61}{cyan!80!white};
\colorlet{c70}{blue!40!white};
\colorlet{c71}{cyan!40!blue};

\colorlet{c04}{black!70!green};
\colorlet{c14}{green};
\colorlet{c05}{olive!80!white};
\colorlet{c15}{green!50!black};

\colorlet{c64}{magenta!50!violet};
\colorlet{c74}{orange!30!pink};
\colorlet{c65}{pink}
\colorlet{c75}{pink!60!blue};

\colorlet{c08}{blue!50!black};
\colorlet{c18}{cyan!45!green}
\colorlet{c09}{cyan!30};
\colorlet{c19}{black!70!white}

\colorlet{c68}{red!50!black}
\colorlet{c78}{yellow!90!black}
\colorlet{c69}{yellow!30};
\colorlet{c79}{brown!85!black};

\foreach \i in {0, 1, 6, 7}{
\foreach \j in {0, 1, 4, 5, 8, 9}{
\draw[c\i\j, fill] (-11 + \i, \j) -- (-11 + \i, 1 + \j) --(-10 + \i, 1 + \j) --(-10 + \i, 0 + \j) -- cycle;
\draw[c\i\j, fill] (-11 + \i, 2+\j) -- (-11 + \i, 3 + \j) --(-10 + \i, 3 + \j) --(-10 + \i, 2 + \j) -- cycle;
\draw[c\i\j, fill] (-9 + \i, \j) -- (-9 + \i, 1 + \j) --(-8 + \i, 1 + \j) --(-8 + \i, 0 + \j) -- cycle;
\draw[c\i\j, fill] (-9 + \i, 2+\j) -- (-9 + \i, 3 + \j) --(-8 + \i, 3 + \j) --(-8 + \i, 2 + \j) -- cycle;
\draw[c\i\j, fill] (-7 + \i, \j) -- (-7 + \i, 1 + \j) --(-6 + \i, 1 + \j) --(-6 + \i, 0 + \j) -- cycle;
\draw[c\i\j, fill] (-7 + \i, 2+\j) -- (-7+ \i, 3 + \j) --(-6 + \i, 3 + \j) --(-6 + \i, 2 + \j) -- cycle;

\draw[c\i\j, fill] (1 + \i, \j) -- (1 + \i, 1 + \j) --(2 + \i, 1 + \j) --(2 + \i, 0 + \j) -- cycle;
\draw[c\i\j, fill] (1 + \i, 2+\j) -- (1 + \i, 3 + \j) --(2 + \i, 3 + \j) --(2 + \i, 2 + \j) -- cycle;
\draw[c\i\j, fill] (3 + \i, \j) -- (3 + \i, 1 + \j) --(4 + \i, 1 + \j) --(4 + \i, 0 + \j) -- cycle;
\draw[c\i\j, fill] (3 + \i, 2+\j) -- (3 + \i, 3 + \j) --(4 + \i, 3 + \j) --(4 + \i, 2 + \j) -- cycle;
\draw[c\i\j, fill] (5 + \i, \j) -- (5 + \i, 1 + \j) --(6 + \i, 1 + \j) --(6 + \i, 0 + \j) -- cycle;
\draw[c\i\j, fill] (5 + \i, 2+\j) -- (5+ \i, 3 + \j) --(6 + \i, 3 + \j) --(6 + \i, 2 + \j) -- cycle;

\foreach \k in {-11, -9, -7, 1, 3, 5}{
\draw[c\i\j, fill] (\k + \i, 12 + \j) -- (\k + \i, 13 + \j) --(\k + 1 + \i, 13 + \j) --(\k + 1 + \i, 12 + \j) -- cycle;
\draw[c\i\j, fill] (\k + \i, 14+\j) -- (\k + \i, 15 + \j) --(\k + 1 + \i, 15 + \j) --(\k + 1 + \i, 14 + \j) -- cycle;
}
}}
    \end{tikzpicture}
  \caption{The tiling of the plane with a two-dimensional disconnected attractor.}
\label{twodim}
\end{figure}
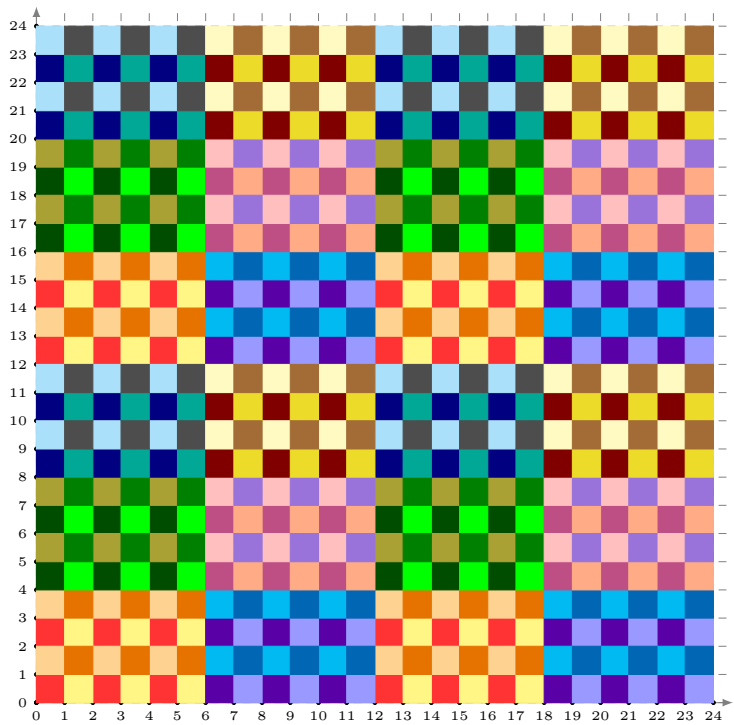

\end{example}

\begin{example}
Let us notice that on the plane there exist disconnected tiles that are not polygonal sets. Moreover, such tiles could have fractal properties: irregular boundaries, etc. One of possible examples is given in Fig. \ref{frac}. It is obtained using the software \cite{KM}. 

\begin{figure}[ht]
\centering
\includegraphics[width = 0.5\textwidth]{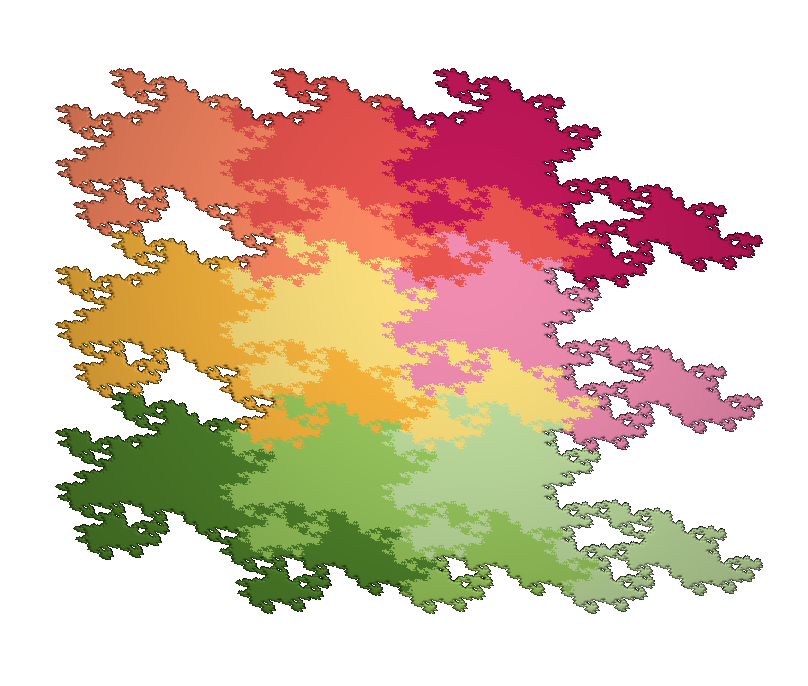}
\caption{The example of disconnected fractal attractor on the plane.}
\label{frac}
\end{figure} 

\end{example}

\section{Conjectures and possible generalizations}
\label{hyp}
Let us summarize the results of this work. We have obtained the full classification of box-attractors in all dimensions, that is, we have classified all integer dilation matrices $M$ and corresponding sets of shifts which generate attractors-parallelepipeds. (Theorems \ref{mainbox} and \ref{tbox2}).  Then we have shown that every convex attractor is a parallelepiped (Theorem \ref{mainconvex}). If we look for attractors among not necessarily convex polygons, we obtain due to Theorem \ref{mainnonconvex} that in the plane they could be only parallelograms.  For higher dimensions we have not obtained a generalization and we formulate it as a conjecture: 

\begin{conjecture} \label{hyppp}
Every, not necessary convex,  attractor in $\R^d$ which is a polygon is a parallelepiped. 
\end{conjecture}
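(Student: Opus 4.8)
The plan is to reduce Conjecture \ref{hyppp} to the convex case already settled in Theorem \ref{mainconvex}: I would prove that \emph{every} polyhedral attractor in $\R^d$ is automatically convex, after which Theorem \ref{mainconvex} immediately forces it to be a parallelepiped. To detect convexity I would use the standard criterion that a bounded solid polyhedron is convex if and only if each of its $(d-1)$-faces is \emph{extreme} (in the sense analogous to the definition of an extreme side given before the proof of Theorem \ref{mainnonconvex}), i.e. the hyperplane spanned by the face does not meet the interior; in that case $G$ is the intersection of the closed half-spaces bounded by these supporting hyperplanes, hence convex. So the whole problem becomes: show that all facets of a polyhedral attractor are extreme.

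First I would set up the dilation structure exactly as in the opening of the proof of Theorem \ref{mainconvex}. Grouping the $(d-1)$-faces into parallelism classes and using that the affine contraction preserves parallelism, I obtain a permutation $\sigma$ of the classes; replacing $M$ by a suitable even power $M_1 = M^{N}$ makes $\sigma$ trivial and makes the contraction so strong that each facet of $G$ is subdivided by the copies $M_1^{-1}G$ into at least two pieces, with $\diam(M_1^{-1}G)$ smaller than half of every facet diameter. Thus $M_1^{-1}$ carries each facet of $G$ to a parallel facet.

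The next and most delicate ingredient is the higher-dimensional analogue of Lemma \ref{lempro}: I need that $M_1^{-1}$ not merely send each facet to a parallel one, but preserve the \emph{relative position} of facets --- concretely, that the outward normal of each facet goes to a strictly positive multiple of a parallel normal, so that ``inward'' stays ``inward'' and the local picture near a facet of $G$ is reproduced faithfully by the adjacent small copies. Equivalently, $M_1^{T}$ fixes each facet-normal direction in $\mathbb{P}^{d-1}$ with a positive eigenvalue. In the plane this is precisely the homothety/positive-diagonal dichotomy obtained in Lemma \ref{lempro} from a projective transformation of $\mathbb{P}^{1}$ that has at least three fixed points and is therefore the identity. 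In dimension $d\ge 3$ this is the heart of the difficulty: a projective transformation of $\mathbb{P}^{d-1}$ can fix many directions without being the identity, so the clean trichotomy breaks down, and one must instead analyse the Jordan and eigenvalue structure of $M_1$ restricted to the finitely many facet-normal lines and rule out complex or negative eigenvalues along them. I expect this step to require genuinely new input beyond the $d=2$ argument.

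Granting the normalisation above, the remaining steps would mirror the plane proof. I would first show $G$ has at least one extreme facet: at an interior point of any facet $F$ the adjacent small copy $H$ has diameter less than half of $F$, so it cannot cross the hyperplane of $F$ and one of its facets is extreme; since extremeness is affine-invariant and $H$ is an affine image of $G$, the set $G$ inherits an extreme facet. Next I would show an extreme facet $l$ has no conjugate facets, by the same ``free space cannot be filled without overlap'' argument (cf. Fig. \ref{nc_2}) transported to codimension one. The crucial and hardest remaining step is \emph{propagation}: using the preserved relative positions, I would argue that every facet sharing a $(d-2)$-face with an extreme facet is itself extreme --- the analogue of the plane statement that the neighbours of an extreme side are extreme --- and then conclude, by connectivity of the facet-adjacency graph, that \emph{all} facets are extreme, whence $G$ is convex and Theorem \ref{mainconvex} finishes the proof. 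The main obstacle, besides the projective step above, is exactly this propagation: in the plane the boundary is a single closed polygonal curve, so the extreme sides form a chain that sweeps out everything, whereas in dimension $d\ge 3$ the angle-filling arguments (as in the proof of Theorem \ref{mainconvex}) become substantially more intricate for non-convex $G$, and controlling them uniformly is, I expect, the reason the statement currently remains only a conjecture.
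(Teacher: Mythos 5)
You are attempting to prove a statement that the paper itself leaves open: Conjecture \ref{hyppp} has no proof in the paper, and Section \ref{hyp} is devoted to explaining why the two-dimensional argument of Theorem \ref{mainnonconvex} does not extend. Your proposal is therefore, at best, a programme rather than a proof, and you candidly flag its two unproved steps yourself: the higher-dimensional substitute for Lemma \ref{lempro} (forcing $M_1^{-1}$ to fix every facet-normal with a positive multiplier, which no longer follows from a projective transformation of $\mathbb{P}^{d-1}$ having finitely many fixed points) and the propagation step showing that extremeness spreads from one facet to all facets. Since these are exactly the load-bearing steps, the reduction ``all facets extreme $\Rightarrow$ convex $\Rightarrow$ Theorem \ref{mainconvex} applies'' does not constitute progress until they are supplied; everything that you do prove (existence of one extreme facet, the convexity criterion) is the easy part.

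More importantly, one step you describe as routine is precisely where the paper exhibits a concrete obstruction. You claim the ``no conjugate facets'' argument (the free-space/overlap contradiction of Fig. \ref{nc_2}) ``transports to codimension one.'' It does not: that argument is genuinely two-dimensional, relying on the fact that a connected polygon meeting the line $l$ in two intervals separates the gap between them, so any tile filling the gap must cross it. In $\R^3$ the paper shows (Fig. \ref{multidim}--\ref{complexity2}) a nonconvex polyhedron resting on a hyperplane by \emph{two} faces, together with a translate parallel to that hyperplane which fills the free space between them, adjoining the original copy without any overlap --- the two copies simply interlock. So the contradiction you invoke never arises, and the intersections of small copies with a facet's hyperplane can be disconnected, which also undermines the uniform ``angle-filling'' control you want for propagation. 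In short: no comparison with a paper proof is possible because none exists; your outline correctly identifies where the difficulty lies, but at its one point of claimed routine transfer it runs directly into the counter-configuration the paper constructs, so the gap is not merely technical but conceptual.
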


The proof of Theorem \ref{mainnonconvex} cannot be straightforwardly generalized for higher dimensions since in arbitrary dimension the intersections of dilated copies of the set with the initial set can be much more complicated. They even do not need to be connected. For example, in Fig. \ref{multidim} we can see nonconvex polyhedron $G$ resting on the plane by two faces. According to the terminology from the proof of Theorem \ref{mainnonconvex}, each of these two faces is extreme for the polygon $G$. The plane which contains them can be considered as a face of the contracted polygon $G$. In Fig. \ref{complexity1}, \ref{complexity2} it is shown that the polyhedron $G$ can be shifted parallel to this plane so that the shifted set will not intersect the set $G$ and they will adjoin to each other.

\begin{figure}[ht]
\centering
\includegraphics[width = 0.5\textwidth]{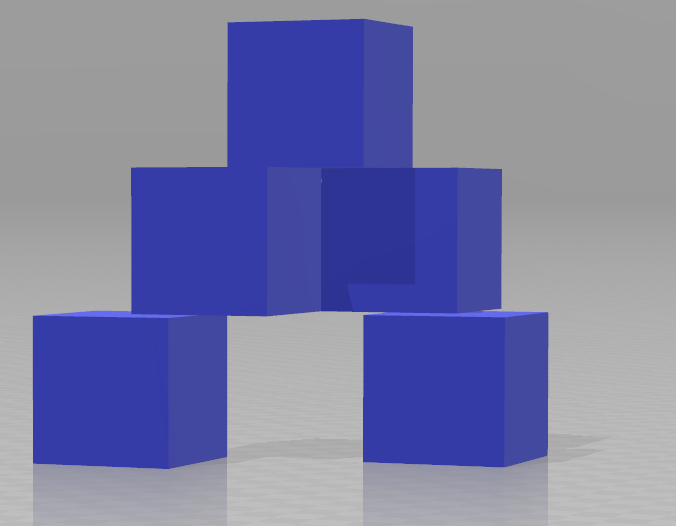}
\caption{Polyhedron resting on the plane by two faces.}
\label{multidim}
\end{figure}

\begin{figure}[ht]
\begin{minipage}[h]{0.45\linewidth}
\centering
\includegraphics[width = 1\textwidth]{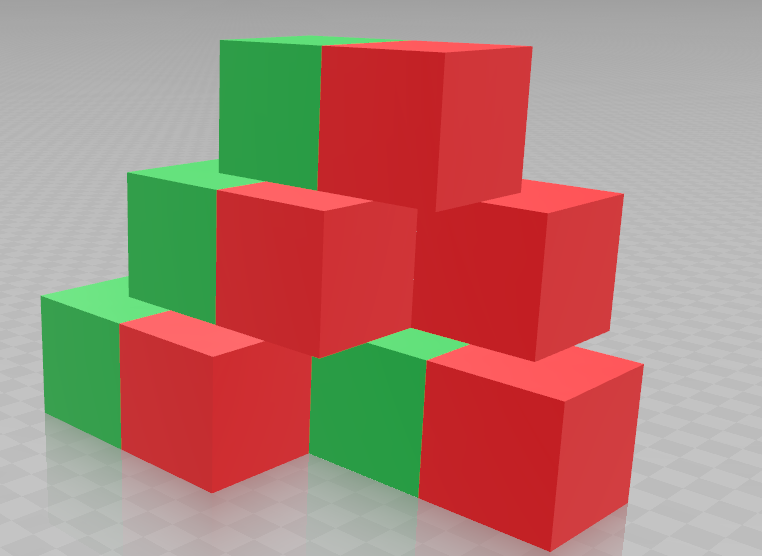}

\caption{Polyhedron resting on the plane by two faces and its shift.}
\label{complexity1}
\end{minipage}
\quad
\begin{minipage}[h]{0.45\linewidth}
\centering
\includegraphics[width = 1\textwidth]{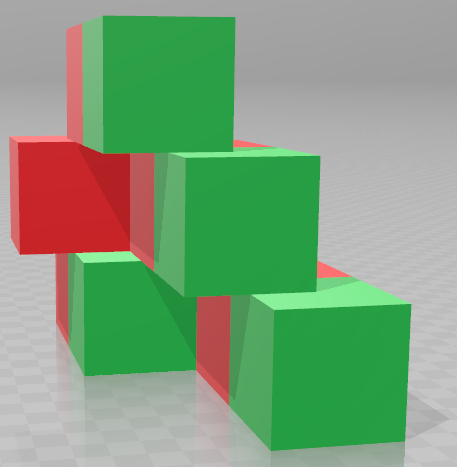}

\caption{Polyhedron resting on the plane by two faces and its shift (another view).}
\label{complexity2}
\end{minipage}
\end{figure}

Let us also remark that for Conjecture  \ref{hyppp} and Theorems \ref{mainbox}--\ref{mainoned} it is important that we deal with self-similar sets (attractors, tiles). Polyhedra whose shifts can tile the space can be nontrivial. For example, in Fig. \ref{nontrivial} it is shown a polyhedron whose shifts can tile the layer in $\R^3$ and so the whole $\R^3$ (the tiling is shown in Fig.\ref{interestingtiling}).

\begin{figure}[h!]
\begin{minipage}[h!]{0.45\linewidth}
\centering
\includegraphics[width = 1\textwidth]{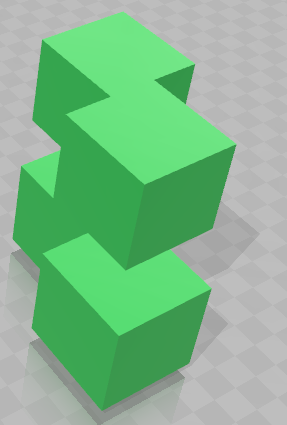}

\caption{Not self-similar polyhedron whose shifts tile the space.}
\label{nontrivial}
\end{minipage}
\quad
\begin{minipage}[h!]{0.45\linewidth}
\centering
\includegraphics[width = 1\textwidth]{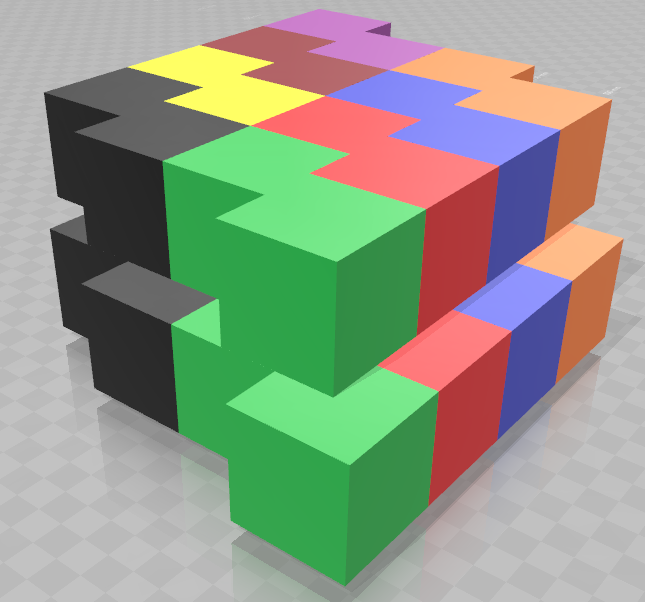}

\caption{Layer tiling with a nonconvex polyhedron.}
\label{interestingtiling}
\end{minipage}
\end{figure}
 
On the other hand, simple attractors different from parallelepipeds do exist. Examples \ref{onedimexam}, \ref{twodimill} are constructed as the special sets of parallelepipeds. 
We have proved the comleteness of this characterization only on the line, for the systems of segments (Theorem \ref{mainoned}). For two-dimensional case, let us formulate the following conjecture: 

\begin{conjecture} \label{hyp2dim} Every two-dimensional attractor consisting of finitely many polygons is affinely similar to a tensor product of one-dimensional attractors.
\end{conjecture}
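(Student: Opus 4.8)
The plan is to transfer the linear-part analysis of the plane case (Theorem~\ref{mainnonconvex}) to the disconnected setting, and then to factor the attractor through its coordinate projections, reducing matters to the one-dimensional classification (Theorem~\ref{mainoned}). \emph{Step 1: normalizing the contraction.} Since $G$ is a finite union of polygons, all edges of all its components fall into finitely many classes of parallel directions, and $M^{-1}G$, being a scaled copy of $G$, has exactly the same classes. Thus $M^{-1}$ induces a permutation $\sigma$ of this finite set of directions; passing to a power $M_1=M^{N}$ with $N$ a multiple of the order of $\sigma$ and large enough that every copy $M_1^{-1}(G+s)$ is shorter than every edge of $G$ (as in the proof of Theorem~\ref{mainnonconvex}), I may assume $M_1^{-1}$ preserves each direction class. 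The proof of Lemma~\ref{lempro} uses only this direction-permutation data and the projective-line fixed-point count, not connectedness, so it carries over: either $M_1^{-1}$ is a homothety with positive ratio, or, in suitable affine coordinates, all edges are parallel to the two axes and $M_1^{-1}=\mathrm{diag}(\mu_1,\mu_2)$ with $0<\mu_1,\mu_2<1$.

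\emph{Step 2: the diagonal case and projections.} Assume the second alternative, so every component of $G$ is an axis-aligned rectilinear polygon. Let $A=\pi_x(G)$ and $B=\pi_y(G)$ be the projections onto the axes; each is a finite union of segments. Because projection commutes with a diagonal map, $A=\bigcup_{s\in S}\mu_1(A+s_x)$ and $B=\bigcup_{s\in S}\mu_2(B+s_y)$, so $A$ and $B$ are one-dimensional self-similar sets with ratios $\mu_1,\mu_2$ and shift-coordinates $\{s_x\}$, $\{s_y\}$. Discarding shifts that yield coinciding one-dimensional pieces, one checks that $A$ and $B$ satisfy the measure-disjoint self-similarity of Definition~\ref{def2}, hence by Lemma~\ref{to_tiling} and Theorem~\ref{mainoned} each is a genuine one-dimensional attractor of the nested-arithmetic-progression form. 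After an affine (here diagonal) change of scale I may take the cells to be unit squares and $A,B$ to have integer endpoints, placing the problem inside the integer framework of Section~\ref{lineres}.

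\emph{Step 3: the product property (the crux).} It remains to prove $G=A\times B$; then $G$ is, by definition, the tensor product of the one-dimensional attractors $A$ and $B$, which is exactly the claim. The inclusion $G\subseteq A\times B$ is immediate from $A=\pi_x(G)$, $B=\pi_y(G)$. For the reverse inclusion I would argue by addresses: a point $x\in A$ arises as $x=\sum_{k\ge1}\mu_1^{k}(t_k)_x$ for some sequence $(t_k)$ in $S$, and a point $y\in B$ as $y=\sum_{k\ge1}\mu_2^{k}(u_k)_y$ for some, possibly different, sequence $(u_k)$ in $S$; the point $(x,y)$ lies in $G$ precisely when $x$ and $y$ admit a \emph{common} $S$-address. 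Hence $G=A\times B$ is equivalent to showing that the finite shift set $S$ factors, up to these address equivalences, as a product $S_x\times S_y$: self-similarity must force the two coordinate dynamics, a priori coupled through the shared choice of $s\in S$, to decouple.

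\emph{Main obstacle.} The heart of the argument is this two-dimensional rigidity --- that self-similarity, and not merely the ability to tile some rectangle, forces the shift set to factor. In dimension one the corresponding rigidity is exactly the divisibility structure of Theorem~\ref{doubleind} (the de Bruijn--Long phenomenon), and no two-dimensional analogue is presently available; disconnected arrangements of rectangles can tile rectangles in genuinely non-product ways, and only the extra self-referential constraint is expected to exclude them. A second gap is the homothety alternative of Step~1: when three or more edge directions occur, $M_1^{-1}$ is forced to be a homothety, and one must still show that such a configuration cannot be realized by a finite union of polygons other than as a (two-direction) product, since the extreme-edge argument of Theorem~\ref{mainnonconvex} no longer localizes to a single component. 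These two points are exactly why the statement is posed as Conjecture~\ref{hyp2dim} rather than proved.
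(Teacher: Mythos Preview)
The paper does not prove this statement: it is stated as Conjecture~\ref{hyp2dim} precisely because no proof is known, and the surrounding discussion offers only motivation (the one-dimensional classification and the tensor-product examples), not an argument. So there is nothing in the paper to compare your approach against.

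Your write-up is therefore not a proof but an honest outline of where a proof would have to go, and you already identify the two genuine obstructions correctly: the decoupling of the shift set $S$ into a product $S_x\times S_y$ (your Step~3), and the elimination of the homothety alternative when more than two edge directions are present. Both are real gaps, and the paper gives no tools beyond what you have used.

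One further issue in Step~2 deserves mention. You assert that $A=\pi_x(G)$ and $B=\pi_y(G)$ are themselves attractors in the sense of Definition~\ref{def2}, after ``discarding shifts that yield coinciding one-dimensional pieces''. But projection does not in general preserve the measure-disjointness of the pieces: two copies $M_1^{-1}(G+s)$ and $M_1^{-1}(G+s')$ that are disjoint in the plane may have $x$-projections that overlap on a set of positive length without coinciding. So even obtaining $A$ and $B$ as bona fide one-dimensional attractors (to which Theorem~\ref{mainoned} applies) already requires some structural input about $G$ --- essentially a weak form of the product property you are trying to prove. This circularity is another facet of the same missing rigidity you flag in Step~3.
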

Similarly, we can formulate Conjectrure \ref{hyp2dim} in an arbitrary dimension.

Let us notice that we investigated not all sets in $\Z$ whose integer shifts tile the line, but only attractors. Because of their self-similarity their shifts tile some segment of the line (Lemma \ref{to_tiling} and Remark \ref{add_remark}). They are automatically periodic since further we periodically shift filled segment along the whole line. In the general case, if considering all sets whose integer shifts cover the line, then they also are periodic, it is shown, for example, in \cite{Kol}. Ibidem, in \cite[p. $3.3$]{Kol}, the examples of non-periodic tilings in higher dimensions are given. 
However, the structure of an arbitrary set which can cover the line with integer shifts is unknown (see \cite[p. $1$]{Kol}). 

\section{Acknowledgements}
The author is grateful to her advisor V.Yu.~Protasov for permanent support and his help in work, and also to anonymous reviewers for their valuable comments which improved this text.

\end{document}